\newtheorem{theo}{Theorem}[section]
\newtheorem{lem}[theo]{Lemma}
\newtheorem{prop}[theo]{Proposition}
\newtheorem{cor}[theo]{Corollary}
\newtheorem{claim}[theo]{Claim}
\newtheorem{thmintro}{Theorem}
\newtheorem{corintro}[thmintro]{Corollary}
\newtheorem{propintro}[thmintro]{Proposition}
\providecommand{\customgenericname}{}
\newcommand{\newcustomtheorem}[2]{%
  \newenvironment{#1}[1]
  {%
   \renewcommand\customgenericname{#2}%
   \renewcommand\theinnercustomgeneric{##1}%
   \innercustomgeneric
  }
  {\endinnercustomgeneric}
}
 \theoremstyle{definition}
\newtheorem{ex}[theo]{Example}
\newtheorem{fact}[theo]{Fact}
\newtheorem{rem}[theo]{Remark}
\newcommand{\N}{\ensuremath{\mathbb{N}}}  
\newcommand{\Z}{\ensuremath{\mathbb{Z}}}
\newcommand{\R}{\ensuremath{\mathbb{R}}}
\newcommand{\C}{\ensuremath{\mathbb{C}}}
\newcommand{\M}{\ensuremath{\mathcal{M}}}
\newcommand{\Rs}{\ensuremath{\mathcal{R}}}
\newcommand{\rng}{\ensuremath{\mathcal{R} = (R, +, 0, \cdot)}}
\newcommand{\mtf}{\ensuremath{\mathcal{N}}}
\newcommand{\df}{\rangle_\mathrm{def}}
\newcommand{\inv}{^{-1}}
\newcommand{\ol}{\overline}
\newcommand{\la}{\langle}
\newcommand{\se}{\subseteq}
\DeclareMathOperator{\Ann}{Ann}
\renewcommand{\leq}{\leqslant}
\renewcommand{\geq}{\geqslant}
\begin{document}

 \title{Ring theory in o-minimal structures}
 \author{Annalisa Conversano}

 \begin{abstract}
We develop a general ring theory in the o-minimal setting culminating in a description of all the definable rings  
in an arbitrary o-minimal structure. 

We show that every definably connected ring with non-trivial multiplication defines an infinite field and it is essentially semialgebraic. 

A surprisingly strong correspondence between definably connected rings and finite-dimensional associative $\R$-algebras is established. 
 Every ideal of a definable unital ring is definable, from which it follows that every definable unital ring is Artinian and Noetherian. If a definable ring $R$ is not unital, we give necessary and sufficient conditions for $R$ to embed in a definable unital ring as an ideal. Moreover, when this is the case, we provide the smallest such definable unital ring $R^{\wedge}$, its definable 
unitazation.
\end{abstract}

\address{Massey University Auckland, New Zealand} 
\email{a.conversano@massey.ac.nz}
  
\noindent
\date{\today}

\maketitle
\tableofcontents

\section{Introduction}

 By a \textbf{ring} we mean an abelian group with an associative multiplication that distributes over the addition on the left and 
on the right. In particular, rings are not assumed to be unital nor commutative.

\begin{thmintro}\label{theo:fields}
Let \M\  
be an o-minimal expansion of a field. The \M-definable rings are:

\begin{enumerate}[(a)]

\medskip
\item the finite-dimensional associative $\M$-algebras;

\medskip
\item the finite rings embedded in a definable abelian group with a product annihilating the definably connected component of zero;

\medskip
\item the direct products of rings
\[
A \times B 
\]

\medskip \noindent
where $A$ and $B$ are isomorphic to rings as in $(a)$, $(b)$ respectively.

\end{enumerate}
 \end{thmintro}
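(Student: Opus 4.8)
The plan is to isolate a definably connected ``linear'' core that will become the algebra factor, to show that the definably compact and the finite data contribute only a ring whose multiplication is essentially finite, and then to glue these together by a central-idempotent splitting. First I would prove that $R^0$, the definably connected component of $0$ in the additive group $(R,+)$, is a two-sided definable ideal of finite index. Indeed, for each $a\in R$ the translations $x\mapsto ax$ and $x\mapsto xa$ are definable additive endomorphisms of $(R,+)$, so they send the definably connected set $R^0$ into a definably connected subgroup, hence into $R^0$; thus $R^0R^0\se R^0$, $RR^0\se R^0$ and $R^0R\se R^0$. Since $(R,+)$ is a definable abelian group in an o-minimal structure, $R/R^0$ is finite, giving a definable ring extension $0\to R^0\to R\to R/R^0\to 0$ with $R^0$ definably connected and $R/R^0$ finite.

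Next I would split the additive group $R^0\cong V\oplus K$ into its torsion-free linear part $V\cong(\M^k,+)$ and its definably compact definably connected part $K$, and show that $K$ lies in the two-sided annihilator $\Ann(R)$ of the whole ring. For fixed $a\in R^0$ the map $\phi_a\colon K\to R^0$, $x\mapsto ax$, is a definable homomorphism with definably connected image, and $a\mapsto\phi_a$ is a definable homomorphism from the definably connected group $R^0$ into $\operatorname{Hom}(K,R^0)\cong\operatorname{Hom}(K,V)\oplus\operatorname{Hom}(K,K)$; here $\operatorname{Hom}(K,V)=0$ because a definably compact connected group has no nonzero homomorphism into a vector group, while $\operatorname{Hom}(K,K)$ admits no nontrivial definably connected subgroups, so $a\mapsto\phi_a$ is constant equal to its value at $a=0$, whence $R^0K=0$. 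For general $a\in R$, if $N$ is the exponent of $R/R^0$ then $N\phi_a=\phi_{Na}=0$, so the connected image of $\phi_a$ is $N$-torsion and therefore trivial; this is exactly where divisibility of definably connected abelian groups enters. Hence $RK=KR=0$, so $K$ is an annihilator ideal and all genuine multiplication is carried by $V$ and by the finite quotient.

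Finally I would attach the algebra structure and perform the splitting. Working on $V$ (equivalently modulo the annihilator $K$), I would invoke the correspondence between definably connected rings and finite-dimensional associative $\R$-algebras, together with the fact that a definably connected ring with non-trivial multiplication defines an infinite field and is essentially semialgebraic, to equip the linear core with the structure of a finite-dimensional associative $\M$-algebra (the trivial-multiplication case being allowed, as the zero algebra on $(\M^k,+)$ is already of type (a)). Using the Wedderburn structure of this algebra I would manufacture a definable central idempotent $e$ splitting off the unital algebra directions, obtaining a Peirce decomposition $R=eRe\oplus(1-e)R(1-e)$ into two-sided ideals, with $A:=eRe$ a finite-dimensional $\M$-algebra (type (a)) and $B:=(1-e)R(1-e)$ a definable ring whose definably connected component is contained in $K$ and is therefore annihilated, so that its multiplication factors through the finite quotient $B/B^0$ (type (b)); the converse, that every ring of type (a), (b) or (c) is $\M$-definable, is routine. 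I expect the splitting to be the main obstacle: producing a definable central idempotent in the possibly non-unital and non-connected setting, and verifying that the complementary ideal genuinely annihilates its connected component. The compact-part annihilation of the second step is what guarantees that the connected component of $B$ is multiplicatively trivial, and the algebra correspondence is what makes the idempotents available; a further technical point is descending the correspondence from $\R$ to the actual base field $\M$ (``essentially semialgebraic'') and checking $\M$-bilinearity of the multiplication on the linear core.
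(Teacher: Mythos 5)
Your overall architecture (show $R^0$ is a finite-index definable ideal, kill the definably compact/torsion part via annihilation, and let the torsion-free linear part carry the algebra structure, quoting the deeper correspondence with finite-dimensional algebras for that part) matches the paper's, which likewise derives Theorem~\ref{theo:fields} from Proposition~\ref{prop:subgr} and Theorem~\ref{theo:main}. But your final splitting step has a genuine gap. You propose to ``manufacture a definable central idempotent $e$'' from the Wedderburn structure of the linear core and split $R=eRe\oplus(1-e)R(1-e)$, with $eRe$ the type-(a) factor. By Theorem~\ref{theo:Jacobson}, a definably connected ring is nilpotent if and only if $0$ is its \emph{only} idempotent; so for a nilpotent connected core (e.g.\ strictly upper triangular matrices over $M$, or any of the rings of Section~3) your $e$ is forced to be $0$, $eRe=\{0\}$, and $(1-e)R(1-e)=R$ has a definably connected component with non-trivial multiplication -- which is exactly what type (b) forbids. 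The non-unital, idempotent-free case is not a corner case here; it is the content of an entire section of the paper. The paper avoids idempotents altogether: the algebra factor $A$ is torsion-free divisible, hence an \emph{additive} direct summand $R=A\oplus B$ by Fact~\ref{fact:divisible-split}, and $B\subseteq\Ann(A)$ comes for free because $B$ is made of $0$-Sylow elements and torsion elements, both of which annihilate $R^0\supseteq A$ by Proposition~\ref{prop:subgr}. That is the mechanism you need in place of the Peirce decomposition.

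Two smaller points. First, your identification of the torsion-free part with $(\M^k,+)$ as a \emph{definable} group is an overclaim -- whether every torsion-free definable abelian group splits into definable $1$-dimensional subgroups is left open in the paper even over a field -- but this does not hurt you, since the theorem only asks for the factors to be abstractly isomorphic to definable rings of types (a) and (b), and the paper explicitly allows $R_0$, $R_1$ to be non-definable. Second, your description of the type-(b) factor as a ring ``whose multiplication factors through the finite quotient $B/B^0$'' is subtly off: as Example~\ref{ex:disconnected} shows, the embedded finite ring may intersect $B^0$ non-trivially and $B^0$ need not have a complementary subring, so the correct formulation is the one in the statement -- a finite ring embedded in the definable group with product annihilating the connected component -- not a product decomposition of $B$ itself.
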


\newpage
If the structure does not expand a field, we prove a field is definable nonetheless.

\begin{thmintro}\label{theo:field}
Every definably connected ring that is not a null ring defines an infinite field.
\end{thmintro}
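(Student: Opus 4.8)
The plan is to read off a field directly from the ring multiplication via the Peterzil--Starchenko trichotomy, the point being that a \emph{definable nonzero biadditive map is already a disguised scalar multiplication}. Write $A=(R,+)$. Since $R$ is definably connected and not a null ring it is nontrivial, hence infinite and of positive dimension, and $A$ is a divisible definably connected abelian group. The multiplication $m\colon R\times R\to R$, $m(x,y)=xy$, is a definable function that is additive in each variable and, by the non-null hypothesis, not identically zero. Thus I have in hand a \emph{definable, nonzero, biadditive} map on a positive-dimensional definable set, and the whole proof consists in showing that such a map cannot live in an o-minimal structure unless that structure already defines an infinite field.

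The crux, and the main obstacle, is to convert biadditivity into non-linearity and then invoke the trichotomy. Suppose toward a contradiction that \M\ defines no infinite field. By the (global form of the) Peterzil--Starchenko Trichotomy Theorem, \M\ is then everywhere trivial or locally linear, so it is a reduct of an ordered vector space over an ordered division ring $D$ and every definable map is piecewise $D$-affine. I would now show that in such a structure the only definable biadditive map $R\times R\to R$ is the zero map. For fixed $x$ the section $y\mapsto m(x,y)$ is a definable additive map, hence (in the linear case) scaling by a single element $d(x)\in D$; biadditivity forces $x\mapsto d(x)$ to be additive as well, so that $m(x,y)=d(x)\,y$ with $d$ additive and not identically zero. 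But then the two-variable map $(x,y)\mapsto d(x)\,y$ is a genuine scalar multiplication, and its definability lets one recover a definable field structure --- exactly the ingredient the linear case forbids. This contradiction shows \M\ is non-linear at some point; being also non-trivial there (a nonzero biadditive map witnesses non-triviality on an interval, using that $R$ is positive-dimensional), it falls into the field case of the trichotomy, and a real closed field is definable on an interval.

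A definable real closed field is infinite, which already gives the conclusion. I would stress that this argument is uniform across all the shapes the ring can take: in particular it covers the subtle \emph{nilpotent} case, where the module $A$ over the ring of left translations has only trivial composition factors and so a Schur-type argument on simple modules sees no field --- yet the bare existence of the definable biadditive multiplication still forces one. The delicate points to get right are the reduction to the global linear/trivial dichotomy from the pointwise trichotomy, and the verification that a nonzero definable biadditive map genuinely witnesses non-linearity on an interval rather than only on the discrete $\Z$-lattice generated by a pair $(a,b)$ with $ab\neq0$; this is where definable connectedness of $R$, guaranteeing positive dimension, is essential.
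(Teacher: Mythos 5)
Your route is genuinely different from the paper's. The paper proves Theorem~\ref{theo:field} by induction on $\dim R$, running through its own algebraic trichotomy: the $1$-dimensional case is a real closed field by \Cref{prop:1-dim}; simple rings are matrix rings over definable division rings (\Cref{theo:simple}), which expose a real closed field via a centralizer; semiprime rings reduce to simple factors; non-semiprime, non-nilpotent rings reduce to $R/J(R)$; and the nilpotent case is handled by the explicit field constructions in \Cref{theo:nilpotent}. You instead want to quote the Peterzil--Starchenko trichotomy and argue that a nonzero definable biadditive map cannot exist in a linear o-minimal structure. That strategy is attractive and, I believe, can be made to work, and it would even bypass all of Sections 3--6; but the step on which everything rests is not actually carried out in your write-up.

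The gap is in your linear-case contradiction. Having written $m(x,y)=d(x)\,y$ with $d\colon R\to D$ additive and nonzero, you conclude by asserting that ``$(x,y)\mapsto d(x)y$ is a genuine scalar multiplication, and its definability lets one recover a definable field structure --- exactly the ingredient the linear case forbids.'' This is circular: $d(x)$ is an element of the abstract division ring $D$ of definable partial scalars, not a definable object inside $\M$, and a nonzero additive-in-$x$ family of scalars does not by itself produce a definable field; you are assuming the conclusion you set out to prove for linear structures. The correct finish is different and elementary: in a (reduct of an) ordered vector space every definable map is piecewise $D$-affine with \emph{finitely many} pieces, so the ``linear part'' $x\mapsto d(x)$ takes only finitely many values; since $(R,+)$ is definably connected, hence divisible, additivity gives $d(x)=2^k d(x/2^k)\in 2^k\cdot\{\text{finitely many values}\}$ for all $k$, forcing $d\equiv 0$ and contradicting non-nullity. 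Two further points you acknowledge but do not resolve also need real work: (i) the passage from the pointwise trichotomy to the global dichotomy ``linear or defines a real closed field'' requires $\omega$-saturation and a transfer argument (non-nullity and definable connectedness of $R$, and definability of a field on an interval, must all be shown to be first-order in the defining formulas); and (ii) your identification of the section $y\mapsto m(x,y)$ with a single scalar presumes $(R,+)$ sits inside $(V^n,+)$ with its induced structure, which for a general definable abelian group in a linear structure needs the Eleftheriou--Starchenko classification (or at least a reduction to $\mtf(R,+)$ via \Cref{prop:subgr}, using $R^2\se\mtf(R,+)$). As it stands the proposal is a plausible outline with its pivotal step missing.
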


It follows that an o-minimal structure \M\ defines (interprets) an infinite field if and only if \M\ defines (interprets) a definably connected ring with non-trivial multiplication.

 \medskip
From now on let \M\ be an arbitrary o-minimal structure. Unless otherwise stated, {\bf definable} means ``definable in \M\ with parameters". When we say that a set is {\bf definably connected}, we assume it is definable.

\bigskip
  \begin{thmintro}\label{theo:Jacobson}
Let $R$ be a $n$-dimensional definably connected ring and $J(R)$ its Jacobson radical. The following are equivalent:

\begin{enumerate} 
\item $R$ is a nilpotent ring.

\item $R$ is a nil ring.

\item $R^{n+1} = \{0\}$.

\item $R = J(R)$. 


\item The only idempotent element in $R$ is zero. 
\end{enumerate}

If $R$ is not nilpotent, $J(R)$ is the maximal nilpotent \Rs-definable ideal, it contains every nilpotent (definable or not) ideal of $R$, the quotient $R/J(R)$ is a semiprime ring, and there is a definable subring $S$ which is definably isomorphic to $R/J(R)$ such that $R = J(R) \oplus S$.  
\end{thmintro}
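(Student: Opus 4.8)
The plan is to prove the five conditions equivalent through the cycle $(3)\Rightarrow(1)\Rightarrow(2)\Rightarrow(4)\Rightarrow(5)\Rightarrow(3)$, isolating the last implication as the only substantial one. The first two steps are immediate: $(3)$ is the definition of nilpotence with exponent $n+1$, and a nilpotent ring is nil. For $(2)\Rightarrow(4)$ I would note that a nil ring is quasi-regular, since $x^m=0$ forces $-(x+x^2+\cdots+x^{m-1})$ to be a quasi-inverse of $x$; thus all of $R$ is a quasi-regular ideal and $R\subseteq J(R)\subseteq R$. For $(4)\Rightarrow(5)$ I would use that $J(R)$ contains no nonzero idempotent: if $e=e^2\in J(R)$ and $z$ is a left quasi-inverse, so $z+e-ze=0$, then right-multiplying by $e$ collapses the middle terms and yields $e=0$.

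The heart is $(5)\Rightarrow(3)$, and here I would study the descending chain of powers $R\supseteq R^2\supseteq R^3\supseteq\cdots$. First I would check that each $R^k$ is a definably connected definable ideal: addition is continuous and each left multiplication $L_a\colon x\mapsto ax$ is a definable endomorphism of $(R,+)$, hence continuous, so the set of products $P_k=\bigcup_{a\in R}L_a(\cdots)$ is a union of connected sets through the common point $0$ and is definably connected, and the subgroup it generates is then definably connected as well. A proper inclusion of definably connected definable subgroups strictly drops the dimension, so each strict step in the chain lowers $\dim$ by at least $1$. Starting from $\dim R=n$, the chain must therefore either reach $\{0\}$ by the $(n+1)$-st term or stabilize earlier at an ideal $I=R^k=R^{k+1}=\cdots$ with $I^2=I\neq\{0\}$. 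In the latter case $I$ is a non-null definably connected ring, so by Theorem~\ref{theo:field} it defines an infinite field, whose identity lifts to a nonzero idempotent of $R$, contradicting $(5)$. Hence only the first alternative survives, giving $R^{n+1}=\{0\}$. I expect this extraction of an honest idempotent element from the idempotent ideal $I$ via Theorem~\ref{theo:field} to be the main obstacle of the whole argument.

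For the non-nilpotent case I would first produce a maximal nilpotent \Rs-definable ideal $N$: the sum of two nilpotent ideals is nilpotent, and the dimension descending chain condition on definable ideals furnishes a largest one, which is \Rs-definable. Since every nilpotent ideal is nil, hence quasi-regular, $N\subseteq J(R)$ and in fact $J(R)$ contains every nilpotent ideal of $R$, definable or not. Passing to $R/N$, maximality of $N$ shows that $R/N$ has no nonzero nilpotent ideal, i.e. is semiprime, so $J(R/N)=0$ and therefore $J(R)=N$. This identification simultaneously yields that $J(R)$ is the maximal nilpotent \Rs-definable ideal and that $R/J(R)$ is semiprime.

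The remaining assertion, $R=J(R)\oplus S$ with $S$ a definable subring definably isomorphic to $R/J(R)$, is the Wedderburn--Malcev (Wedderburn principal) theorem. I would transport $R$ to the associated finite-dimensional associative algebra over the field supplied by Theorem~\ref{theo:field}, which is real or algebraically closed and in particular of characteristic $0$; then $R/J(R)$ is separable and the classical Wedderburn--Malcev theorem provides a subalgebra complement $S$ to the radical. Because the entire correspondence is essentially semialgebraic, this complement is carried back to a definable subring of $R$. The delicate points here are ensuring characteristic $0$ (hence separability, so that a complement exists at all) and verifying that the classical complement is transported to a genuinely definable subring.
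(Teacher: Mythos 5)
The easy implications are fine: $(3)\Rightarrow(1)\Rightarrow(2)$ are definitional, and your $(2)\Rightarrow(4)$ (nil implies quasi-regular) and $(4)\Rightarrow(5)$ (right-multiplying $z+e-ze=0$ by $e$) are correct and essentially what the paper does. The problem is $(5)\Rightarrow(3)$, which is where all the content lives, and your argument for it has a genuine gap. First, a repairable issue: in this paper $R^k$ denotes the \emph{set} of $k$-fold products, not a subgroup, so ``proper inclusion of definably connected subgroups drops dimension'' must be applied to the subgroups (or ideals) generated by the $R^k$; this can be arranged. The fatal issue is the last step: from a terminal non-null (indeed non-nilpotent) definably connected ring $I$ you invoke \Cref{theo:field} to get an infinite field ``whose identity lifts to a nonzero idempotent of $R$.'' Theorem~\ref{theo:field} does not produce a subfield of $R$: in the nilpotent case the field is built on a quotient such as $R/I_n\cong aR$ with a \emph{newly defined} multiplication $\otimes$, and its identity is an identity for $\otimes$, not an idempotent for the ring product. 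The ring $T_n(K)$ of strictly upper triangular matrices is non-null, definably connected, defines an infinite field, and has no nonzero idempotent whatsoever. Moreover the paper's proof of \Cref{theo:field} itself routes the non-semiprime case through \Cref{theo:Jacobson}, so the appeal is also circular. What you actually need is precisely the statement ``a non-nilpotent definably connected ring contains a nonzero idempotent element,'' and the paper proves this by a substantial induction on $\dim R$: lifting idempotents modulo $\Ann_1(R)$ when the annihilator is nontrivial, and otherwise using \Cref{lem:min-principal} to find a minimal right ideal $aR$, Brauer's Lemma (\Cref{Brauer}) to extract an idempotent when $(aR)^2\neq 0$, and separate commutative/non-commutative arguments when $(aR)^2=0$. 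None of that machinery is replaceable by \Cref{theo:field}.

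Two further points on the non-nilpotent addendum. Your maximal definable nilpotent ideal $N$ only rules out \emph{definable} nilpotent ideals of $R/N$, so ``$R/N$ is semiprime'' needs the extra argument the paper gives (every nilpotent ideal lies in $J(R)$, $J(R)$ is itself nilpotent by applying $(4)\Rightarrow(1)$ to $J(R)$, and preimages of nilpotent ideals of $R/J(R)$ are nilpotent). For the splitting $R=J(R)\oplus S$ with $S$ \emph{definable}, the transport to a finite-dimensional $\R$-algebra provided by the structure theorems is only up to abstract isomorphism or elementary equivalence, so the Wedderburn--Malcev complement is not automatically carried back to a definable subring; the paper defers this to Section~7 and proves it by a long induction inside the proof of \Cref{theo:main}, applying the Wedderburn principal theorem only after first establishing that the relevant ring is a \emph{definable} $K$-algebra so that a $K$-basis makes the complement definable. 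You correctly flag both delicate points but do not resolve them.
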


\bigskip
\begin{thmintro}\label{theo:semiprime}
 Let $R$ be a definably connected ring. The following are equivalent:
 \begin{enumerate}[(i)]

\item $R$ is semiprime.

\item $R$ is semisimple.

 \item $J(R) = \{0\}$.


 \item $R$ is a direct product of simple definable rings. 
 
 \item $R$ is unital and either $R$ is a division ring or there is a (unique) finite set of primitive orthogonal idempotents $\{e_1, \dots, e_n\}$ such that $1 = \sum_{i = 1}^n {e_i}$ and
 \[
 R = \bigoplus_{i = 1}^n Re_i  = \bigoplus_{i = 1}^n e_iR  = \bigoplus_{i, j = 1}^n e_iRe_j,
 \]
 
 \medskip \noindent
 where $\{Re_i: i = 1, \dots n\}$ is the set of minimal left ideals of $R$ and $\{e_iR: i = 1, \dots n\}$ is the set of minimal right ideals of $R$.  Moreover, each $e_iRe_j$ is an infinite subring that is a division ring when $i = j$ and it is a null ring when $i \neq j$.

\end{enumerate}

 \end{thmintro}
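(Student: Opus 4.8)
The plan is to prove $(i)\Leftrightarrow(iii)$ directly from \Cref{theo:Jacobson}, and to close the remaining conditions through the cycle $(iii)\Rightarrow(v)\Rightarrow(iv)\Rightarrow(ii)\Rightarrow(iii)$. For $(i)\Leftrightarrow(iii)$: a nonzero semiprime ring is not nilpotent (otherwise $R$ is a nonzero nilpotent ideal of itself), so \Cref{theo:Jacobson} presents $J(R)$ as a nilpotent ideal, which semiprimeness forces to be $\{0\}$; conversely, if $J(R)=\{0\}$ then $R\neq J(R)$, so $R$ is again non-nilpotent and \Cref{theo:Jacobson} makes $J(R)$ contain every nilpotent ideal, whence $R$ has none and is semiprime (the case $R=\{0\}$ being trivial). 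Among the remaining arrows, $(ii)\Rightarrow(iii)$ is the standard fact that a semisimple ring has zero Jacobson radical, and $(iv)\Rightarrow(ii)$ holds because each simple factor is a finite-dimensional, hence Artinian, definable ring and is therefore semisimple, while a finite direct product of semisimple rings is semisimple — the product being finite because $R$ is definably connected of finite dimension. The one substantial implication is $(iii)\Rightarrow(v)$.

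For $(iii)\Rightarrow(v)$, assume $J(R)=\{0\}$ and $R\neq\{0\}$. Since $R=J(R)$ is the defining condition for a nil ring in \Cref{theo:Jacobson}, $R$ is not nil, and the same theorem furnishes a nonzero idempotent. I would then run the Wedderburn--Artin construction with the o-minimal dimension as a length function in place of a chain condition: regarding $R$ as a left module over itself, dimension theory (dimension together with the number of definably connected components) bounds its length, so $R$ is a finite direct sum of minimal definable left ideals, and $J(R)=\{0\}$ makes these simple. By Brauer's lemma each such minimal left ideal has the form $Re_i$ with $e_i$ a nonzero idempotent, so $R=\bigoplus_{i=1}^n Re_i$ with orthogonal $e_1,\dots,e_n$. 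Summing the corresponding projections yields $1=\sum_{i=1}^n e_i$, establishing that $R$ is unital; primitivity of $e_i$ is equivalent to simplicity of $Re_i$, and the central idempotents cutting out the isotypic components are unique.

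With a unit available the Peirce decomposition $R=\bigoplus_{i,j}e_iRe_j$ and the identities $Re_j=\bigoplus_i e_iRe_j$, $e_iR=\bigoplus_j e_iRe_j$ are formal. For $i\neq j$, orthogonality gives $(e_iae_j)(e_ibe_j)=e_ia(e_je_i)be_j=0$, so $e_iRe_j$ is a null ring; for $i=j$, $e_iRe_i$ is isomorphic (up to passing to the opposite ring) to $\End_R(Re_i)$ acting on the simple module $Re_i$, so Schur's lemma makes it a division ring with identity $e_i$. Being a definably connected ring with nonzero multiplication, $e_iRe_i$ defines an infinite field by \Cref{theo:field} and is in particular infinite; each $e_iRe_j$ lying in a single simple component is then infinite as well, being additively isomorphic to $e_iRe_i$ via multiplication by a unit of that component. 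Finally $(v)\Rightarrow(iv)$ is obtained by grouping the $e_i$ according to the isomorphism type of $Re_i$: each class spans a two-sided ideal $e^{(k)}R$ (with $e^{(k)}$ the central idempotent summing the class) that is a simple definable ring, and $R$ is their finite direct product.

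The main obstacle is the bootstrap inside $(iii)\Rightarrow(v)$: passing from the essentially topological hypothesis ``definably connected with $J(R)=\{0\}$'' to the genuinely ring-theoretic conclusion that $R$ is \emph{unital and Artinian}. The classical structure theory presupposes a chain condition and an identity, neither of which is given a priori, and for a non-unital ring one cannot yet assume all left ideals are definable; the role of o-minimality is precisely to supply the finiteness substitute (dimension plus component count as a length function) and to keep the left ideals, idempotents, corners $e_iRe_j$, and simple factors produced along the way definable, so that \Cref{theo:field} can be invoked to pin down their infinite division-ring structure. Once unitality is secured — at which point, by the unital theory, every ideal of $R$ is definable and $R$ is honestly Artinian — the remaining identifications reduce to the classical Wedderburn--Artin theory.
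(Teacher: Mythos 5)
Your proposal is essentially correct but organized differently from the paper's proof. You establish $(iii)\Rightarrow(v)$ directly by the Wedderburn--Artin construction (minimal definable left ideals, Brauer's lemma, orthogonal idempotents) and then derive $(iv)$ from $(v)$ by grouping the $e_i$ into isotypic components. The paper goes the other way: it first proves $(i)\Leftrightarrow(iv)$ by a prime/non-prime dichotomy --- if $R$ is prime it is simple by \Cref{theo:simple}; otherwise it peels off a proper non-zero definable ideal $I$, shows $J(I)$ is an ideal of $R$ (hence zero), applies induction to make $I$ unital with central identity $e$, and splits $R = I \times (1-e)R(1-e)$ --- and only then deduces $(v)$ from $(iv)$ together with the explicit form $M_{n_i}(D_i)$ of the simple factors. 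Your decomposition $R=\bigoplus_i Re_i$ is exactly the content of the paper's \Cref{prop:semiprime-sum}, which the paper proves as a standalone lemma feeding into \Cref{theo:simple}; in effect you are inlining that proposition and bypassing \Cref{theo:simple} in the forward direction, at the cost of redoing the isotypic analysis for $(v)\Rightarrow(iv)$. Both routes work, and yours has the merit of making the idempotent structure the organizing principle.

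Two steps need more care than you give them. First, ``dimension bounds the length of ${}_RR$, so $R$ is a finite direct sum of minimal definable left ideals'' is not a valid inference: finite length does not give semisimplicity of a module ($\Z/4\Z$ over itself has length $2$ and is indecomposable). What actually produces the direct sum is the iterated splitting: a minimal $Ra$ of least positive dimension exists by \Cref{lem:min-principal} (which requires $\Ann_2(R)=\{0\}$, true for semiprime rings), semiprimeness forces $(Ra)^2\neq\{0\}$ so \Cref{Brauer} gives $Ra=Re_1$, and the Peirce decomposition $R=Re_1\oplus R(1-e_1)$ lets you iterate inside $R(1-e_1)$; this iteration is also what makes the $e_i$ orthogonal. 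Second, $e=\sum_i e_i$ is formally only a \emph{right} identity of $\bigoplus_i Re_i$; to see it is a two-sided identity you need semiprimeness again (if $y=ex-x\neq 0$ then $ey=0$ and $s y=(se)y=0$ for all $s$, so $y$ lies in the square-zero two-sided ideal $\Ann_2(R)$, a contradiction). Relatedly, in $(iv)\Rightarrow(ii)$ the phrase ``finite-dimensional, hence Artinian'' is not automatic in this setting, since arbitrary left ideals of a definable ring need not be definable; you should either invoke the explicit form of the simple factors from \Cref{theo:simple}, or simply read semisimplicity off $(v)$, since ${}_RR=\bigoplus_i Re_i$ with each $Re_i$ simple.
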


\bigskip
 \begin{thmintro}\label{theo:simple}
Let $R$ be an infinite definable ring with non-trivial multiplication. The following are equivalent:

\begin{enumerate} [(a)] 

\item $R$ is simple.

\item $R$ is definably simple.

\item $R$ is definably connected and prime.

\item There is a definable real closed field $K$ and a definably connected $K$-definable  division ring $D$ such that $R$ is definably isomorphic to $M_n(D)$, for some $n \geq 1$.
  \end{enumerate}
\end{thmintro}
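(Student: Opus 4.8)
The plan is to prove the cycle $(a)\Rightarrow(b)\Rightarrow(c)\Rightarrow(d)\Rightarrow(a)$. The two outer implications are immediate. For $(a)\Rightarrow(b)$, every definable two-sided ideal is in particular a two-sided ideal, so abstract simplicity forces definable simplicity. For $(d)\Rightarrow(a)$, I would invoke the classical fact that a full matrix ring $M_n(D)$ over a division ring is simple; it is infinite with $M_n(D)^2=M_n(D)\neq\{0\}$, hence non-null. The real content is in $(b)\Rightarrow(c)$ and $(c)\Rightarrow(d)$.

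For $(b)\Rightarrow(c)$ I would first establish definable connectedness. Let $R^0$ be the definably connected component of $0$ in $(R,+)$. For each $r\in R$ the left translation $x\mapsto rx$ is a definable endomorphism of $(R,+)$, so it sends the definably connected set $R^0$ to a definably connected set containing $0$; hence $rR^0\subseteq R^0$, and symmetrically $R^0r\subseteq R^0$. Thus $R^0$ is a definable two-sided ideal, and definable simplicity together with $R$ being infinite forces $R^0=R$. Next, the additive subgroup $R^2$ generated by all products is definable (the subgroup generated by a definable subset of a definable group is definable in an o-minimal structure) and is a two-sided ideal; since multiplication is non-trivial, $R^2\neq\{0\}$, so $R^2=R$ and $R$ is not nilpotent. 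By \Cref{theo:Jacobson} the Jacobson radical $J(R)$ is then a definable ideal different from $R$, whence $J(R)=\{0\}$ by definable simplicity. Now \Cref{theo:semiprime} applies: $R$ is a finite direct product of simple definable rings, and definable simplicity collapses the product to a single factor, so $R$ is simple, hence prime. Together with connectedness this yields $(c)$.

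For $(c)\Rightarrow(d)$, a prime ring is semiprime, so \Cref{theo:semiprime} gives the unital Peirce decomposition of $R$, and primeness again forces a single simple factor, so $R$ is unital: either $R$ is a division ring ($n=1$), or there are primitive orthogonal idempotents $e_1,\dots,e_n$ with $\sum_i e_i=1$, each $e_iRe_i$ an infinite definably connected division ring (being the image of the definable additive endomorphism $x\mapsto e_ixe_i$ of the definably connected group $R$) and each $e_iRe_j$ a null ring for $i\neq j$. Since $R$ is simple, the minimal left ideals $Re_1,\dots,Re_n$ are pairwise isomorphic, and these isomorphisms are right multiplications by elements $e_{1i}\in e_1Re_i$, $e_{i1}\in e_iRe_1$, producing matrix units $e_{ij}=e_{i1}e_{1j}$ with $e_{ij}e_{kl}=\delta_{jk}e_{il}$ and $\sum_i e_{ii}=1$. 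Fixing these finitely many elements as parameters and setting $D:=e_1Re_1$, the map $r\mapsto (e_{1i}\,r\,e_{j1})_{i,j}$ is a definable ring isomorphism $R\xrightarrow{\sim}M_n(D)$. Finally, $D$ is a definably connected division ring, so by \Cref{theo:field} and the classification of infinite definable fields in o-minimal structures (each is real closed, or algebraically closed of the form $K(\sqrt{-1})$ for a definable real closed $K$), $D$ is a finite-dimensional division algebra over a definable real closed field $K$; as such it is $K$-definable and definably connected, giving $(d)$.

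The main obstacle is the definable Artin--Wedderburn step inside $(c)\Rightarrow(d)$. The abstract structure theorem is classical, so the real work is the definability bookkeeping: verifying that the matrix units can be pinned down as a finite tuple of parameters (so that no definable choice principle is needed) and that the resulting coordinate isomorphism is definable, and then extracting the real closed field $K$ from the o-minimal field classification and checking that $D$, and hence $M_n(D)\cong R$, is genuinely $K$-definable and definably connected rather than merely $\mathcal{M}$-definable. Controlling the interaction between the abstract ring-theoretic decomposition and the o-minimal definability constraints is where the care is required.
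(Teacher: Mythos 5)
Your skeleton (the cycle with the content concentrated in $(b)\Rightarrow(c)$ and $(c)\Rightarrow(d)$) matches the paper, but there is a genuine logical problem: both of your substantive steps lean on \Cref{theo:semiprime}, and in this paper the equivalences you invoke there --- semiprime $\Leftrightarrow$ direct product of simple definable rings, and the Peirce decomposition into minimal left ideals $Re_i$ --- are themselves \emph{deduced from} \Cref{theo:simple}. (The proof of $(i)\Leftrightarrow(iv)$ in \Cref{theo:semiprime} begins ``If $R$ is prime, then $R$ is simple by \Cref{theo:simple}\dots'', and $(i)\Leftrightarrow(v)$ is derived from $(iv)$ plus \Cref{theo:simple}.) So your argument is circular as written. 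The paper avoids this: for $(b)\Rightarrow(c)$ it argues directly that $J(R)$ is a proper definable ideal (hence zero, hence $R$ semiprime), and then that if $aRb=\{0\}$ with $a,b\neq 0$, the definable ideal $\Ann_1(Rb)$ must equal $R$, forcing $bRb=\{0\}$ and contradicting semiprimeness; for $(c)\Rightarrow(d)$ it uses \Cref{prop:semiprime-sum}, which is proved independently and already supplies the decomposition $R=Re_1\oplus\cdots\oplus Re_n$ into minimal left ideals generated by idempotents. Your proof can be repaired by substituting these tools for \Cref{theo:semiprime}, but as it stands the dependency runs the wrong way. Two smaller slips: the blanket claim that the subgroup generated by a definable subset of a definable group is definable is false (the subgroup generated by $\{1\}$ in $(\R,+)$ is $\Z$); it is rescued here only because $R^2$ is a union of definably connected sets through $0$. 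And in $(c)\Rightarrow(d)$ you write ``since $R$ is simple'' to get the minimal left ideals pairwise isomorphic, but at that point you only have primeness --- which does suffice, since primeness gives $e_iRe_j\neq\{0\}$, but the hypothesis should be cited correctly or the cycle does not close.

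Setting the circularity aside, your route through $(c)\Rightarrow(d)$ is genuinely different from the paper's and worth comparing. You run the classical Artin--Wedderburn construction of matrix units $e_{ij}$ and write down the explicit definable isomorphism $r\mapsto(e_{1i}re_{j1})_{i,j}$ onto $M_n(D)$ with $D=e_1Re_1$; this is clean on the algebra side but leaves you to verify by hand that $D$ is a definably connected division ring over a definable real closed field and that everything is $K$-definable. The paper instead first shows each $K_i=R(e_i)$ is a real closed field acting definably on $Re_i$, uses primeness of $e_iRe_j$ together with the $1$-dimensional groups $\la x\df$ to produce definable field isomorphisms $K_i\cong K_j$, concludes via \Cref{prop:algebras} that $R$ is a finite-dimensional definable associative $K$-algebra, and only then cites Wedderburn for finite-dimensional algebras. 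The paper's route concentrates all the o-minimal work in one lemma-sized step (the fields are definably isomorphic) and outsources the matrix-unit bookkeeping to the classical theory; yours would need that same o-minimal input anyway to identify $K$, so the paper's organization is the more economical one here.
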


\newpage
 \begin{thmintro}\label{theo:main}
Let $R$ be a definably connected ring. If $R$ is not a null ring, there is a positive integer $s$ and definable real closed fields $K_1, \dots, K_s$ such that 

\begin{enumerate}
\medskip
\item
$K_i$ is not definably isomorphic to $K_j$, if $i \neq j$;

\medskip
\item
$R$ is a direct product of rings 
\[
R = R_0 \times R_1 \times \cdots \times R_s
\]

\medskip \noindent
where $R_0 \subseteq \Ann(R)$ - and therefore $R^2 \se R_1 \times \cdots \times R_s$ -  and for $i > 0$, $R_i$ is a finite dimensional associative $K_i$-algebra that is not a null ring;

\medskip
\item if the Jacobson radical $J$ is a null ring or $\mtf(J, +)$ is a direct sum of definable $1$-dimensional subgroups, then for 
$i > 0$, $R_i$ is a definable $K_i$-vector space and a definable $K_i$-algebra;

\medskip
\item
if $R$ is unital, then 
\[
R(1) = K_1 \times \cdots \times K_s, 
\]
\medskip
$(R, +)$ is torsion-free and a direct sum of $1$-dimensional definable subgroups;   

\medskip
\item
if $R$ is not unital, there is a unital definable ring containing $R$ as an ideal
if and only if $(R_0, +)$ is definable torsion-free and $(J, +)$ is a direct sum of definable $1$-dimensional subgroups 
\[
 J = \bigoplus_{i = 1}^{\dim J} A_i,
\]

\medskip
where each $(A_i, +)$ admits a definable multiplication making it a real closed field. 

\noindent
When this is the case, there is a smallest definably connected unital ring $R^{\wedge}$ containing $R$ as an ideal, and $\dim R^{\wedge} - \dim R \leq  \dim R_0 + s$. We call $R^{\wedge}$ the \textbf{definable unitazation} of $R$.
\end{enumerate}
\end{thmintro}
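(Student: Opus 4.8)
The plan is to build the decomposition from the splitting supplied by Theorem~\ref{theo:Jacobson} and then reorganize it according to the finitely many definable real closed fields that govern the multiplication. First I would set $J = J(R)$ and apply Theorem~\ref{theo:Jacobson}: either $R$ is nilpotent with $R = J$, or $R = J \oplus S$ with $S$ a definable semiprime subring definably isomorphic to $R/J$. By Theorem~\ref{theo:semiprime}, $S$ is a direct product of simple definable rings, and by Theorem~\ref{theo:simple} each simple factor is definably isomorphic to some $M_n(D)$ with $D$ a definably connected division ring definable over a definable real closed field; in the purely nilpotent case the real closed field instead comes from the scalar action on $(R,+)$ provided by Theorem~\ref{theo:field}. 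The base fields so produced need not be distinct, but in an o-minimal structure any two definable real closed fields are either definably isomorphic or orthogonal, so collecting the distinct definable-isomorphism classes yields the integer $s$ and the fields $K_1, \dots, K_s$ of conclusion~(1).

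Next I would assemble the product decomposition~(2) by grouping the simple factors of $S$, together with the parts of $J$ on which each $K_i$ acts, into blocks indexed by $K_i$. The key external input is the structure theory of definably connected abelian groups and of modules over definable real closed fields: once the scalar actions are fixed it forces $(R_i,+)$ to be finitely generated over $K_i$ and makes the blocks direct factors both additively and multiplicatively, with $R_iR_j = \{0\}$ for $i\neq j$ by orthogonality of the $K_i$. Each resulting $R_i$ is then a finite-dimensional associative $K_i$-algebra with non-trivial multiplication, and $R_0$ is the complementary additive part, which carries no compatible field action and hence lies in $\Ann(R)$; the inclusion $R^2 \se R_1 \times \cdots \times R_s$ is immediate. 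For~(3), under the hypothesis that $J$ is null or $\mtf(J,+)$ splits as a direct sum of $1$-dimensional definable subgroups, each such subgroup can be endowed with a definable real closed field structure, so no definably compact direction obstructs linearity and each $R_i$ becomes a genuine $K_i$-vector space on which the multiplication is $K_i$-bilinear, i.e.\ a definable $K_i$-algebra.

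For the unital case~(4) I would first note that $R_0 \se \Ann(R)$ together with $1\cdot x = x$ forces $R_0 = \{0\}$. The orthogonal idempotents of the semiprime quotient $R/J$ from characterization~(v) of Theorem~\ref{theo:semiprime} lift through the nilpotent radical $J$, and the subring they generate with the field scalars is the diagonal semisimple subring $R(1) = K_1 \times \cdots \times K_s$; divisibility of definably connected groups plus the $K_i$-vector space structure then gives that $(R,+)$ is torsion-free and a direct sum of $1$-dimensional definable subgroups. The heart of the theorem, and the step I expect to be hardest, is the definable unitization~(5): the abstract Dorroh extension $R \oplus \Z$ is unavailable definably, so I would instead adjoin a unit inside the category of definable rings by extending the real closed field scalars — enlarging each block $R_i$ by the identity of $K_i$ and unitizing the null part $R_0$ one $1$-dimensional direction at a time. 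This is possible exactly when $(R_0,+)$ is definably torsion-free and $(J,+) = \bigoplus_{i=1}^{\dim J} A_i$ with each $(A_i,+)$ carrying a definable real closed field structure, since only then do the scalar actions needed to define a definable unit exist; conversely, restricting the unit and field structure of any definable unital ring containing $R$ as an ideal back to $R$ forces these additive conditions, giving the equivalence.

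Finally, for minimality and the bound I would show that $R^{\wedge}$ is generated over $R$ by the finitely many adjoined units, so it is contained in (hence equal to the intersection of) every definable unital ring containing $R$ as an ideal and is therefore smallest. Counting at most one adjoined generator per field-block (contributing $s$) and at most one per $1$-dimensional direction of $R_0$ (contributing $\dim R_0$) yields $\dim R^{\wedge} - \dim R \leq \dim R_0 + s$. The recurring obstacle throughout is controlling the interaction between the torsion or definably compact part of $(R,+)$ — which admits no field action and obstructs both the vector space structure of~(3) and the unitization of~(5) — and the field-governed blocks; the hypotheses in~(3) and~(5) are precisely what eliminate this obstruction.
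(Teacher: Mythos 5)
Your overall architecture --- split off the Jacobson radical, decompose the semiprime part into simple factors via \Cref{theo:semiprime} and \Cref{theo:simple}, group everything by definable-isomorphism classes of real closed fields, and unitize blockwise with a Dorroh-type extension $K_i \times R_i$ --- is the same as the paper's, and your treatment of (4), of the necessity direction of (5), and of the bound $\dim R^{\wedge} - \dim R \leq \dim R_0 + s$ is essentially correct. But there are two genuine gaps. First, you take the splitting $R = J \oplus S$ as an input from \Cref{theo:Jacobson}. That splitting is not available at this point: the paper explicitly defers it to Section 7, where it is proved as part of the proof of \Cref{theo:main} itself, by induction on dimension (lifting idempotents modulo $J$, the Pierce decomposition, Wedderburn's principal theorem when $R$ is unital, and passing to the lower-dimensional $eR$ or $Re$ when the lifted idempotent $e$ is a zero-divisor, separately for $J^2 = \{0\}$ and $J^2 \neq \{0\}$). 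Invoking it as a black box is circular, and it is the single hardest step of the argument.

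Second, in the nilpotent case you appeal to \Cref{theo:field}, which only asserts that some infinite field is interpretable; it does not supply a scalar action of a real closed field on $(R,+)$, let alone the decomposition into $K_i$-algebras. That content is \Cref{theo:nilpotent}, whose proof is the longest in the paper, and it cannot be replaced by an appeal to ``the structure theory of modules over definable real closed fields'': no such action is given in advance, and the whole difficulty is that $\mtf(R,+)$ need not split into definable $1$-dimensional subgroups. This is precisely why conclusion (3) carries a hypothesis and why the factors $R_0, R_i$ in (2) are in general only abstract (possibly non-definable) direct summands --- a subtlety your sketch of (2) elides when it asserts the blocks are ``direct factors both additively and multiplicatively.'' Relatedly, the orthogonality principle you cite as a known external fact (two definable real closed fields are definably isomorphic or do not interact) is itself proved in the paper where needed, via the observation that a definable additive isomorphism matching the unities is automatically a field isomorphism; you would need to supply that argument as well.
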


\medskip
When a definable ring is not definably connected, one can reduce to finite rings and definably connected rings due to the following:

  \begin{propintro}\label{prop:disconnected}
 Let \rng\ be a  definable ring. Set $G = (R, +)$. Suppose $R$ is not definably connected and $|R/R^0| = n$. The $n$-torsion subgroup $T_n$ of $G$ is a finite ideal such that $R^2 \subseteq \mtf(G) \times T_n$.  If $R$ is unital,  $R$ is a direct product of unital rings      
 \[
 R = F \times R^0
 \]
 
 \medskip
 where $F$ is a finite ideal and $(R^0, +)$ is torsion-free.
\end{propintro}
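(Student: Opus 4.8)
The plan is to establish the three assertions in turn---that $T_n$ is a finite ideal, that $R^2 \se \mtf(G) \times T_n$, and that $R$ splits when unital---with the recurring mechanism being that the ``definably compact directions'' of $G^0$ support no multiplication. First I would dispose of $T_n$. Since $|G/G^0| = n$, every element of $G/G^0$ is killed by $n$, so $nG \se G^0$; as the definably connected abelian group $G^0$ is divisible this forces $nG = G^0$, and therefore the definable endomorphism $x \mapsto nx$ of $G$ has image $G^0$ and kernel $T_n$ of dimension $\dim G - \dim G^0 = 0$, hence $T_n$ is finite. That $T_n$ is a two-sided ideal is immediate from biadditivity of the product: for $r \in R$ and $x \in T_n$ one has $n(rx) = r(nx) = 0$ and $n(xr) = (nx)r = 0$.

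Next I would record that $\mtf(G)$ is an ideal of $R$, which is what lets me pass to a quotient ring. Writing $\mtf(G) \se G^0$ for the maximal definably connected torsion-free definable subgroup, so that $K := G^0/\mtf(G)$ is definably compact, I note that for each $r$ the left multiplication $L_r$ is a definable endomorphism of $G$; its composite $\mtf(G) \to G \to G/\mtf(G)$ has definably connected image, which therefore lies in $K$, and so is a definable homomorphism from a definably connected torsion-free group into a definably compact one. As there is no nontrivial such homomorphism (by comparison of o-minimal fundamental groups), this composite vanishes and $L_r(\mtf(G)) \se \mtf(G)$; the symmetric argument on the right makes $\mtf(G)$ a two-sided ideal. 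Hence $\bar R := R/\mtf(G)$ is a definable ring with additive group $\bar G := G/\mtf(G)$ and $\bar G^0 = K$ definably compact.

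The heart of the matter, and the step I expect to be hardest, is the lemma that the compact part multiplies trivially: $K \cdot K = \{0\}$ in $\bar R$. I would prove this through the o-minimal fundamental group: for fixed $\bar a \in K$ the map $\bar x \mapsto \bar a\,\bar x$ restricts to a definable endomorphism of $K$, whose induced action on $H_1(K) \cong \Z^{\dim K}$ is an integer matrix depending definably on $\bar a$; being $\Z$-valued on the definably connected set $K$, this matrix is constant, equal to its value $0$ at $\bar a = 0$, and faithfulness of the action on $H_1$ then forces the endomorphism itself to be $0$. Granting $K \cdot K = 0$, for each fixed $\bar b \in \bar G$ the map $\bar c \mapsto \bar c\,\bar b$ sends $K$ into $\bar G$ while killing $K$, so its restriction to $K$ is a definable homomorphism from a definably connected group into the finite group $\bar G/K$ and is therefore trivial; thus $K \cdot \bar G = 0$, and symmetrically $\bar G \cdot K = 0$.

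Finally I would assemble the conclusions. For arbitrary $\bar a, \bar b \in \bar G$ we have $n\bar a \in K$, so $n(\bar a\bar b) = (n\bar a)\bar b = 0$; hence every product is $n$-torsion in $\bar G$, that is $n(ab) \in \mtf(G)$ for all $a,b \in R$. Since $\mtf(G)$ is divisible and torsion-free, $n(ab) \in \mtf(G)$ yields $ab \in \mtf(G) \oplus T_n$, and as $\mtf(G) \cap T_n = \{0\}$ this gives $R^2 \se \mtf(G) \times T_n$. If in addition $R$ is unital, then $R = R\cdot 1 \se R^2 \se \mtf(G) \oplus T_n$, forcing $R = \mtf(G) \oplus T_n$; because $T_n$ is finite and $\mtf(G)$ is definably connected, the identity component is $R^0 = \mtf(G)$, which is torsion-free, while $F := T_n$ is a finite ideal. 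As both summands are ideals meeting only in $\{0\}$, all mixed products lie in $\mtf(G) \cap T_n = \{0\}$, so $R = F \times R^0$ is a direct product of (unital) rings, as required.
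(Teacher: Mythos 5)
Your overall architecture (isolate the torsion/definably compact directions of $G^0$, show they support no multiplication, then assemble) is sound, and your treatment of $T_n$, the final assembly, and the unital case are fine. But the central lemma is where the proof breaks down, in two places. First, the claim $K\cdot K=\{0\}$ for $K=G^0/\mtf(G)$ is obtained by invoking $H_1(K)\cong \Z^{\dim K}$ and ``faithfulness of the action on $H_1$''. That faithfulness assertion (a definable endomorphism of a definably compact definably connected abelian group acting trivially on $H_1$ is zero) is itself a nontrivial theorem which you do not prove; more seriously, the whole $H_1$ apparatus --- and even the existence of torsion points in definably compact groups --- is established only for o-minimal expansions of fields, whereas the proposition is stated for an arbitrary o-minimal structure (the paper's \Cref{ex:disconnected} lives in an ordered divisible abelian group, where your homological input is unavailable). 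Second, your deduction of $K\cdot\bar G=\{0\}$ from $K\cdot K=\{0\}$ does not work as written: composing $\bar c\mapsto \bar c\,\bar b$ with $\bar G\to\bar G/K$ only shows $K\bar b\subseteq K$, i.e.\ that $K$ is a right ideal, not that the products vanish. (This step is fixable: $n\bar b\in K$, so $n(\bar c\,\bar b)=\bar c(n\bar b)=0$, hence the image of the definably connected $K$ under $\bar c\mapsto\bar c\,\bar b$ lies in the finite group $\bar G[n]$ and is therefore trivial.)

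The paper avoids all of this with an elementary observation you are missing (\Cref{prop:subgr}): if $mx=0$ then $xR$ and $Rx$ lie in the $m$-torsion subgroup of $(R,+)$, which is finite by \Cref{fact:finite-torsion}; hence $xR^0$ and $R^0x$, being definably connected subsets of a finite set containing $0$, are trivial. Thus every torsion element annihilates $R^0$, so $T_n\subseteq\Ann(R^0)$, and the $0$-Sylow of $R^0$ (the definable subgroup generated by the torsion of $R^0$) also lies in $\Ann(R^0)$, giving $(R^0)^2\subseteq\mtf(G)$ with no homology at all. Combined with $R=T_n+R^0$ (which follows from $nG=G^0$ as in your first paragraph), the containment $R^2\subseteq\mtf(G)\times T_n$ is immediate, and the unital case then goes through essentially as in your last paragraph.
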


 \medskip
 \begin{corintro}
If $R$ is a definable ring, $\mtf(R, +)$ is elementarily equivalent to a $n$-dimensional associative $\R$-algebra, where $n = \dim \mtf(R, +)$.
\end{corintro}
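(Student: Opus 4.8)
The plan is to read the statement off the structure theorems, reducing a general definable ring first to the definably connected case and then to the factors produced by Theorem~\ref{theo:main}. First I would note that $\mtf(R,+)$, being a torsion-free definable abelian group, is definably connected (its connected component is divisible, so a nonzero element of the finite quotient would produce torsion), hence $\mtf(R,+)\se R^0$ and $\mtf(R,+)=\mtf(R^0,+)$ carries the induced ring structure. So I may assume $R$ is definably connected and apply Theorem~\ref{theo:main}. If $R$ is a null ring only the additive analysis below is needed; otherwise $R=R_0\times R_1\times\cdots\times R_s$ with $R_0\se\Ann(R)$ and each $R_i$ ($i>0$) a finite-dimensional associative algebra over a definable real closed field $K_i$. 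Since a $K_i$-vector space is torsion-free and $R_0$ is null, passing to maximal torsion-free parts yields the ring decomposition
\[
\mtf(R,+)\;=\;\mtf(R_0,+)\times R_1\times\cdots\times R_s .
\]

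Next I would use that elementary equivalence of rings is preserved by finite direct products (Feferman--Vaught, or a direct induction on formula complexity): if $A_j\equiv A_j'$ for $j=0,\dots,s$ then $\prod_j A_j\equiv\prod_j A_j'$. It therefore suffices to produce, for each factor, an $\R$-algebra of the same o-minimal dimension that is elementarily equivalent to it, and then take the product; the dimensions add up to $n=\dim\mtf(R,+)$ because $\dim R_i=\dim_{K_i}R_i$. The null factor $\mtf(R_0,+)$ is a torsion-free divisible abelian group with zero multiplication, hence a nontrivial $\Q$-vector space with trivial product; elementary equivalence of null rings reduces to that of their additive groups, and all nontrivial torsion-free divisible abelian groups are elementarily equivalent, so $\mtf(R_0,+)\equiv(\R^{\,d_0},+,0)$, the $d_0$-dimensional null $\R$-algebra.

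The substantial case is a single factor $R_i$, a finite-dimensional associative algebra over the real closed field $K_i$ with $\dim R_i=d_i$. Here $R_i$ is \emph{interpreted} in the pure field $K_i$ on the set $K_i^{\,d_i}$ by structure constants $\bar c\in K_i$, and its ring theory is controlled by $\operatorname{tp}(\bar c)$ in the complete, model-complete theory of real closed fields. The semisimple part is easy: after a Wedderburn--Malcev splitting $R_i=S\oplus J$ with $J=\mathrm{rad}(R_i)$ and $S\cong R_i/J$, the real-closed Frobenius theorem writes $S$ as a product of blocks $M_m(K_i)$, $M_m(K_i[\sqrt{-1}])$, $M_m(\mathbb{H}_{K_i})$, and since $M_m(-)$ is a uniform interpretation and $K_i\equiv\R$, $K_i[\sqrt{-1}]\equiv\C$, $\mathbb{H}_{K_i}\equiv\mathbb{H}_{\R}$, each block is elementarily equivalent to the corresponding block over $\R$. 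What remains is the radical $J$ with its $S$-bimodule structure and its own nilpotent multiplication.

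\textbf{The main obstacle} is the final descent of this last piece to an \emph{honest} finite-dimensional $\R$-algebra, i.e.\ showing that the ring-theoretic elementary type of $R_i$ is realized by a $d_i$-dimensional algebra whose scalars form the field $\R$. Naively copying $\bar c$ into $\R$ fails: the real closed field generated by the structure constants may be non-Archimedean, and the ring can define from its multiplication tensor continuous quantities (cross-ratio-type invariants) living in that field whose type need not be realized in $\R$. The plan is to prove that, modulo the change-of-basis action, the elementary type of $R_i$ as a ring is pinned down by \emph{discrete} data only — the block structure of $S$, the $S$-module structure of each $J^k/J^{k+1}$, and finitely many square-class and signature invariants of the associated bilinear multiplication maps — all of which transfer between real closed fields; realizing these invariants over $\R$ then produces $R_i'\equiv R_i$. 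I expect establishing this rigidity, namely that no genuinely non-Archimedean modulus survives at the level of first-order ring theory, to be the hard part; once it is in place, assembling $(\R^{\,d_0},+,0)\times R_1'\times\cdots\times R_s'$ finishes the proof.
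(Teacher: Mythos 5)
Your reductions are sound and follow the route the paper intends: $\mtf(R,+)$ is a torsion-free definable ideal, hence definably connected and contained in $R^0$; \Cref{theo:main} decomposes it as $\mtf(R_0,+)\times R_1\times\cdots\times R_s$ with a null factor and finite-dimensional associative $K_i$-algebras; elementary equivalence passes through finite direct products; the null factor is handled by the theory of nontrivial torsion-free divisible abelian groups; and the semisimple blocks $M_m(K)$, $M_m(K(\sqrt{-1}))$, $M_m(\mathbb{H}(K))$ transfer to $\R$ because they are interpreted in $K$ without parameters and $K\equiv\R$. Up to this point you are doing exactly what the paper (implicitly) does --- it states this corollary, and the analogous ones for reduced and semiprime rings, without a written proof, and in those special cases the argument really is immediate because the structure constants lie in the prime field.

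The gap is the step you yourself flag: for a factor $R_i$ with nonzero radical you never prove that $R_i$ is elementarily equivalent to a $d_i$-dimensional $\R$-algebra; you only announce a plan (``the elementary type is pinned down by discrete data'') and defer it as the hard part. As written this is not a proof, and the difficulty is genuine: the structure constants governing the radical and its bimodule structure over the semisimple part need not be normalizable into $\Q$, finite-dimensional nilpotent algebras do carry continuous moduli in higher dimensions, and a type over $\emptyset$ in RCF (for instance that of an infinitesimal) need not be realized in $\R$, so the naive ``copy the constants into $\R$'' argument fails and some rigidity or normalization statement must actually be established. To close the gap you would need either to show that the algebras produced by \Cref{theo:nilpotent} and \Cref{theo:main} admit bases with structure constants in the prime field, or to prove the transfer principle you describe; without one of these the corollary is not established by your argument (nor, for that matter, does the paper supply this step explicitly).
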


\medskip
\begin{corintro}
Let $R$ be a definable ring. If $\mtf(R, +)$  is a direct sum of definable $1$-dimensional subgroups, then $R/\Ann(R)$ is Artinian, Noetherian and every ideal is definable.
In particular, if $R$ is unital, every ideal is definable and $R$  is Artinian and Noetherian.
\end{corintro}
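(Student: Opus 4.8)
The plan is to reduce everything to the quotient $\bar R = R/\Ann(R)$ and to show that there every ideal is a subspace over the relevant real closed fields, from which both definability and the chain conditions are immediate. Since $\Ann(R) = \{x : xR = Rx = \{0\}\}$ is cut out by a definable condition, it is a definable ideal, and an ideal $I \supseteq \Ann(R)$ of $R$ is definable if and only if $I/\Ann(R)$ is definable in $\bar R$; so it suffices to prove that every ideal of $\bar R$ is definable. I would record at once that this already yields the Artinian and Noetherian conclusions: each ideal will be a finite-dimensional subspace over a real closed field, and subspaces of a fixed finite-dimensional space satisfy both chain conditions, so no infinite strictly monotone chain of ideals can occur.

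Next I would feed the hypothesis into Theorem \ref{theo:main}. Writing the definably connected component as $R^0 = R_0 \times R_1 \times \cdots \times R_s$ with $R_0 \subseteq \Ann(R)$, the assumption that $\mtf(R,+)$ is a direct sum of definable $1$-dimensional subgroups is exactly the hypothesis of part (3), so each $R_i$ with $i>0$ is a genuine definable $K_i$-vector space and definable $K_i$-algebra. The disconnected case is dispatched by Proposition \ref{prop:disconnected}, which splits off a finite ideal $F$ (all of whose subsets are definable) and leaves the torsion-free connected ring $R^0$. Passing to $\bar R$ annihilates $R_0$ together with the annihilators of the individual factors, so that $\bar R$ becomes, up to isomorphism, a finite product of \emph{faithful} finite-dimensional $K_i$-algebras carrying definable scalar multiplication; reducing to a single factor, it is enough to treat one such $A$. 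Here I would need the small lemma that a single quotient by the annihilator already produces a nondegenerate ring, i.e. $\Ann(\bar R) = \{0\}$, which is precisely where the $1$-dimensional real closed field summands of the radical enter.

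The heart of the matter is then the algebraic claim that in a faithful finite-dimensional $K$-algebra $A$ every two-sided ideal is a $K$-subspace, hence definable. Since $A$ is faithful it is not nilpotent, so by Theorem \ref{theo:Jacobson} it contains a nonzero idempotent; lifting the identity of the semisimple quotient $A/J(A)$ gives an idempotent $e$ with $(1-e)A(1-e) \subseteq J(A)$. On every Peirce corner on which $e$ acts as a one-sided identity, multiplication by $\lambda e$ realises the scalar $\lambda$, so any ideal is automatically $K$-stable there; for the semisimple part this finishes the job outright, because by Theorem \ref{theo:semiprime} it is unital and $\lambda\cdot x = (\lambda 1)x$ is literally a ring operation. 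I would then induct on $\dim_K A$, using faithfulness to argue that the ideal generated by an element of the residual corner $(1-e)A(1-e)$ spreads, under left and right multiplication, into the good corners where scalar-stability is already available, and invoking the definable real closed field structure on the $1$-dimensional summands of the radical from Theorem \ref{theo:main}(5) to recover the scalar action on the residual corner itself. The main obstacle I anticipate is exactly this residual corner $(1-e)A(1-e)$, where $e$ is not an identity and the ring multiplication does not visibly implement the $K$-action; the whole difficulty is thus concentrated in the Jacobson radical and is overcome only by exploiting the field structures furnished by the hypothesis.

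Finally, for the ``in particular'' I would observe that a unital ring has $\Ann(R) = \{0\}$, whence $\bar R = R$, and that Proposition \ref{prop:disconnected} together with Theorem \ref{theo:main}(4) shows $(R,+)$ is a direct sum of definable $1$-dimensional subgroups (handling the finite factor from Proposition \ref{prop:disconnected} separately, where every ideal is finite and so definable). Thus the hypothesis holds automatically, every ideal of $R$ is a subspace and hence definable, and the preceding chain-condition remark makes $R$ Artinian and Noetherian.
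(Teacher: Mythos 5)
Your handling of the ``in particular'' clause is essentially correct and is the intended route: for unital $R$, \Cref{prop:disconnected} splits off a finite unital factor, \Cref{theo:main}(4) makes each definably connected factor $R_i$ a definable $K_i$-algebra in which the scalar action of $K_i = R_i(e_i)\subseteq R_i$ is realised by ring multiplication, so every two-sided ideal is a $K_i$-subspace of a finite-dimensional definable vector space, hence definable, and both chain conditions follow by comparing dimensions.

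The first clause is where your proposal breaks, and the break is not repairable. The ``small lemma'' you flag as needed --- that $\Ann(R/\Ann(R)) = \{0\}$ --- is false: for $R = T_3(K)$, the strictly upper triangular $3\times 3$ matrices, $\Ann(R) = KE_{13}$ and $R/\Ann(R)$ is a $2$-dimensional null ring, so the reduction to faithful algebras collapses at the first step. Worse, the ``heart of the matter'' claim that every two-sided ideal of a \emph{faithful} finite-dimensional $K$-algebra is a $K$-subspace also fails. Take $A = \mathrm{span}_K(E_{11}, E_{12}, E_{13}, E_{23}) \subseteq M_3(K)$: this is a definable associative $K$-algebra, $(A,+)\cong (K^4,+)$ is a direct sum of definable $1$-dimensional subgroups, and a direct computation gives $\Ann(A) = \{0\}$. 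Yet $H = \Z E_{23} + K E_{13}$ is a two-sided ideal (one checks $AE_{23} = AE_{13} = KE_{13}$ and $E_{23}A = E_{13}A = \{0\}$) that is not a $K$-subspace, hence not definable, and the chain $H_n = 2^{-n}\Z E_{23} + KE_{13}$ violates the ascending chain condition. The obstruction you correctly located in the residual Peirce corner $(1-e)A(1-e)$ --- where $e$ kills the element on both sides, so ring multiplication never implements the scalar action --- is thus not a difficulty to be engineered around: it is exactly where counterexamples live, and the element $E_{23}$ above sits in that corner. Since this $A$ satisfies every hypothesis of the first clause, no proof of that clause can succeed as stated; your argument for the unital case survives precisely because there the residual corner is empty.
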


\bigskip
The paper is organized as follows: In Section 2 we introduce the terminology and related notation, and we prove some general results we will be using throughout the paper. In Section 3 nilpotent groups are studied and \Cref{theo:main} is proved for them in \Cref{theo:nilpotent}. Section 4 describes definable reduced rings as a direct product of division rings that are definable in the ring structure. In Section 5 the Jacobson radical is introduced and \Cref{theo:Jacobson} is proved. Section 6 is devoted to semiprime rings and the proofs of \Cref{theo:semiprime} and \Cref{theo:simple}. Finally, the general case is treated in Section 7 where the proofs of \Cref{theo:fields}, \Cref{theo:field}, \Cref{theo:main} and \Cref{prop:disconnected} are completed.
  
\medskip
\textbf{Acknowledgments.} Thanks to Will Johnson for correspondence during the preparation of this manuscript.


\section{Some general results}

 Recall that by \cite{OPP96}, any definable ring $\Rs = (R, +, 0, \cdot)$ admits a definable ring-manifold structure, and if  $\M$ is an o-minimal expansion of a real closed field, then $\Rs$ admits a definable $k$-differentiable ring-manifold for any natural number $k$.  
 
 We say that a ring $R$ has \textbf{trivial multiplication} when $xy = 0$ for each $x, y \in R$.   Such ring is also called a \textbf{null ring}. In other words, $R$ is a null ring if and only if $R^2 = \{0\}$.
 
 A note about the power notation. In ring theory usually $R^k$ denotes the ideal generated by the products of $k$ elements from $R$. However, it will be convenient for us to use this notation to denote just the products, as the ideal generated by a definable set is not generally definable. That is, in this paper,
\[
R^{k+1} = \bigcup_{a \in R}aR^{k} = \bigcup_{a \in R}R^ka,
\]

where, for a set $X \subseteq R$ and $a \in R$, as usual, 
\[
Xa = \{xa : x \in X\}\quad \mbox{ and }\quad  aX = \{ax: x \in X\}.
\]

Clearly $R^{k}$ is a definable set. Each $aR^{k}$ contains $0$, so if $R$ is definably connected, then $R^k$ is definably connected for each $k$, by the same argument showing that unions of connected sets with non-trivial intersection is connected.

The \textbf{center} $Z(R)$ of a ring $R$ is the set of elements commuting with every element in $R$:
\[
Z(R) = \{a \in R: ax = xa \ \forall\, x \in R\}.
\]

\medskip
The center is a subring, but not necessarily an ideal. For example, the center of $M_n(\R)$, the ring of $n \times n$ real matrices, is the subring of diagonal matrices, which is not an ideal.

Whenever the ring is unital, its center is of course not trivial.

For each $a \in R$, the \textbf{centralizer $C(a)$ of $a$} is  
\[
C(a) = \{r \in R: ra = ra\}.
\] 

 $C(a)$ is a definable subring containing $a$. Its center $Z(C(a))$ is a commutative definable subring containing $a$. Because of DCC on definable subgroups, there is a smallest definable subrgroup $\la X \df$ of $(R, +)$ containing $X$ for each subset $X$. It follows that there is also a \textbf{smallest definable subring of $R$ containing $X$}. We denote it by 
 $\bf{R(X)}$. 
By the above, $R(X)$  is commutative whenever the elements in $X$ commute pairwise and 
$R(X) \subseteq C(X)$. In particular, note that $R(a)$ is commutative for each $a \in R$.
In any case, $C(X)$ is a definable subring by DDC again.

For $X \subseteq R$ define its \textbf{annihilators} to be
\[
\Ann_1(X) = \{a \in R: ax = 0\ \ \forall x \in X\}, \quad \Ann_2(X) = \{a \in R: xa = 0\ \ \forall x \in X\}
\]
\[
\Ann(X) = \Ann_1(X) \cap \Ann_2(X) = \{a \in R: ax = xa = 0\ \ \forall x \in X\}.
\]

\medskip
For each $X$, $\Ann_1(X)$ is a definable left ideal, $\Ann_2(X)$ is a definable right ideal, $\Ann(X)$ is a definable subring. Note that by DCC on definable subgroups we can assume $X$ is finite, so they are all $\Rs$-definable. When $X = R$ , $\Ann_1(R)$, $\Ann_2(R)$ and $\Ann(R)$ 
are \Rs-definable ideals.  
 
If $R$ is any ring, an element $x \in R$, $x \neq 0$ is called a \textbf{left zero-divisor} when $\Ann_2(x) \neq \{0\}$ and a  
 \textbf{right zero-divisor} when $\Ann_1(x) \neq \{0\}$.  When we say that $x$ is a  \textbf{zero-divisor} we mean that $x$ is a left or right zero-divisor. In general, a left zero-divisor does not need to be a right zero-divisor, and viceversa.

\begin{fact} \label{fact:finite-torsion}
Fix $n \in \N$. Suppose $G$ is a definable group such that $g^n = e$ for all  $g \in G$. Then $G$ is finite. 
\end{fact}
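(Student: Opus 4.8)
The plan is to argue by contradiction: assume $G$ is infinite and produce an infinite definably connected abelian subgroup on which the hypothesis $g^n = e$ is incompatible with divisibility. First I would reduce to the definably connected case. If $G$ is infinite, its definably connected component of the identity $G^0$ is a definable subgroup of finite index, hence still infinite, with $\dim G^0 = \dim G \geq 1$, and $g^n = e$ continues to hold for every $g \in G^0$. Thus it suffices to derive a contradiction under the additional assumption that $G$ is definably connected of positive dimension.

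Next I would pass to a well-chosen abelian subgroup. Using the descending chain condition on definable subgroups (available in any o-minimal structure, and already invoked in this section), I would choose an infinite definable subgroup $H \leq G$ that is minimal among such, so that every proper definable subgroup of $H$ is finite; replacing $H$ by $H^0$ I may take $H$ definably connected. The key structural input is that such an $H$ is abelian: one either cites the Peterzil--Steinhorn theorem that every infinite definable group contains a definable one-dimensional subgroup, together with the fact that one-dimensional definably connected groups are abelian, or one argues directly that a minimal infinite definably connected group is abelian, by observing that for each $x$ the centralizer $C(x)$ is a definable subgroup, hence finite or all of $H$, and then ruling out the finite-centralizer case. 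Either way one obtains an infinite, definably connected, abelian definable subgroup $A \leq G$ still satisfying $a^n = e$ for every $a \in A$.

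Finally I would invoke divisibility to close the argument. It is a standard fact that every infinite definably connected abelian group in an o-minimal structure is divisible; in particular the $n$-th power map $a \mapsto a^n$ is surjective on $A$. But the hypothesis forces this map to be identically equal to the identity, so its image is $\{e\}$, whence $A = \{e\}$, contradicting that $A$ is infinite. Therefore $G$ is finite. The main obstacle is the middle step, namely locating an infinite abelian subgroup inside a possibly non-abelian $G$; this is precisely where the structure theory of o-minimal groups is needed, and the cleanest route is to quote the existence of one-dimensional definable subgroups (which are automatically abelian and divisible), after which divisibility makes the contradiction immediate.
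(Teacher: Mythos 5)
The paper offers no proof of this statement at all: it is quoted as a known result (Strzebonski's theorem, from his work on the o-minimal Euler characteristic, that definable groups of bounded exponent are finite), so there is no in-text argument to compare yours against. Your overall strategy is the standard one, and your endgame is airtight: the reduction to $G^0$ is fine, and an infinite definably connected divisible group cannot have exponent $n$, since surjectivity of $g \mapsto g^n$ together with $g^n = e$ forces the group to be trivial.

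The soft spot is the middle step, exactly where you flag it. The Peterzil--Steinhorn theorem you invoke is \Cref{fact:1-dim} of this paper, and it only produces a one-dimensional (torsion-free) definable subgroup when the group is \emph{not} definably compact; in that case your argument in fact closes immediately, since a non-trivial torsion-free subgroup already contradicts $g^n=e$. The genuine gap is the definably compact case: the existence of one-dimensional definable subgroups there is a much deeper matter (it rests on the torsion-point theorems), so route (a) as stated is a misquotation. Route (b) is also incomplete: that a minimal infinite definably connected definable subgroup is abelian is true, but ``ruling out the finite-centralizer case'' is precisely the non-trivial content of that claim and cannot be waved through. Fortunately neither route is needed. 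Strzebonski's divisibility theorem applies to \emph{every} definably connected definable group, abelian or not (the paper already uses divisibility of definably connected groups freely, e.g.\ in the proof of \Cref{prop:subgr}), so you may apply your final step directly to $G^0$ without first locating an abelian subgroup. Alternatively, observe that the paper only ever applies this Fact to subgroups of the additive group of a definable ring, where $G$ is abelian and $G^0$ is itself the required infinite definably connected abelian (hence divisible) group. With either repair the argument is complete.
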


\begin{cor}
Every infinite definable ring has characteristic zero.
\end{cor}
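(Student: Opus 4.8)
The plan is to prove the contrapositive: a definable ring of positive characteristic must be finite. Recall that, since the rings here need not be unital, the \textbf{characteristic} of $R$ is the least positive integer $n$ with $nx = 0$ for all $x \in R$, and it is $0$ when no such integer exists.

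First I would assume $R$ has positive characteristic, say $n > 0$, so that $nx = 0$ for every $x \in R$. Reading this condition on the definable abelian group $G = (R, +)$, it says exactly that every element of $G$ is annihilated by $n$; rewriting $G$ multiplicatively with identity $e$, this is precisely the hypothesis $g^n = e$ for all $g \in G$ appearing in \Cref{fact:finite-torsion}.

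Applying \Cref{fact:finite-torsion} to the definable group $G$ then forces $G$, and hence $R$, to be finite. Taking the contrapositive yields the statement: if $R$ is infinite, then no positive integer $n$ can satisfy $nx = 0$ for all $x$, so $\mathrm{char}(R) = 0$.

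The main obstacle is essentially nonexistent once \Cref{fact:finite-torsion} is available; the only point to verify is the purely notational translation between the additive torsion condition $nx = 0$ and the multiplicative bounded-exponent hypothesis $g^n = e$. All of the genuine content is absorbed into the Fact, which is what rules out infinite definable groups of bounded exponent.
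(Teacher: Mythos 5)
Your proof is correct and matches the paper's intent exactly: the corollary is stated immediately after \Cref{fact:finite-torsion} precisely because a positive characteristic $n$ makes $(R,+)$ a definable group of exponent $n$, which that Fact forces to be finite. The paper leaves this one-line argument implicit, and your only added content is the (correct) notational translation from additive to multiplicative exponent.
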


 Other results about groups we will be using are the following:

\begin{fact}\label{fact:divisible-split}
Let $(G, +)$ be an abelian group. If $A$ is a divisible subgroup, there is a subgroup $B$ such that
\[
G = A \oplus B.
\]
\end{fact}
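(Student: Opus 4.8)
The plan is to prove the classical fact that a divisible subgroup of an abelian group is a direct summand (equivalently, that divisible abelian groups are injective $\Z$-modules), by a maximal-complement argument via Zorn's lemma; no o-minimality is needed. First I would let $\mathcal{S}$ be the poset of all subgroups $B \leq G$ satisfying $A \cap B = \{0\}$, ordered by inclusion. This is nonempty (it contains $\{0\}$) and every chain has an upper bound (its union is again such a subgroup), so Zorn's lemma furnishes a maximal element $B$. Since $A \cap B = \{0\}$ the sum $A + B$ is direct, so it remains only to show $A + B = G$, whence $G = A \oplus B$.

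For that equality I would argue by contradiction: suppose some $g \in G$ lies outside $A + B$, and consider the subgroup $H = \{n \in \Z : ng \in A + B\}$ of $\Z$. Since $g \notin A + B \supseteq B$ we have $g \notin B$, so $B + \Z g$ properly contains $B$; maximality then forces $A \cap (B + \Z g) \neq \{0\}$, which yields a nonzero $a = b + ng \in A$ with necessarily $n \neq 0$ (else $a \in A \cap B$), and hence $H \neq \{0\}$. On the other hand $1 \notin H$ because $g \notin A+B$, so $H = m\Z$ for some integer $m \geq 2$. Writing $mg = a_0 + b_0$ with $a_0 \in A$, $b_0 \in B$, I now invoke the divisibility of $A$ to choose $a_1 \in A$ with $m a_1 = a_0$, and replace $g$ by the corrected element $g' = g - a_1$, for which $mg' = b_0 \in B$.

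The heart of the matter is then to check that $B' := B + \Z g'$ still meets $A$ trivially while strictly containing $B$, contradicting maximality. Strictness is immediate, since $g' \in B$ would give $g = g' + a_1 \in A + B$. For the intersection, suppose $b + k g' \in A$ with $b \in B$ and $k \in \Z$; then $k g' \in A + B$, and since $kg = kg' + ka_1$ with $ka_1 \in A$ we get $kg \in A + B$, so $m \mid k$, say $k = m\ell$. But then $k g' = \ell(m g') = \ell b_0 \in B$, whence $b + k g' \in A \cap B = \{0\}$. Thus $A \cap B' = \{0\}$, contradicting the maximality of $B$ and proving $A + B = G$.

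I do not expect a genuine obstacle here, as this is a purely group-theoretic fact. The only delicate point is the divisibility correction $g \mapsto g - a_1$: it is exactly this adjustment that makes the enlarged subgroup $B'$ avoid $A$, and it is the single place where the hypothesis that $A$ is divisible is actually used.
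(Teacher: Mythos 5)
Your proof is correct. The paper states this as a classical fact without proof (it is the standard result that divisible abelian groups are injective $\Z$-modules, hence direct summands wherever they embed), so there is no argument in the paper to compare against; your Zorn's lemma argument --- maximal subgroup $B$ with $A \cap B = \{0\}$, followed by the divisibility correction $g \mapsto g - a_1$ to enlarge $B$ if $A + B \neq G$ --- is the standard textbook proof and is complete and sound.
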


\begin{fact}\label{fact:tf-divisible}  
 Let $G$ be a torsion-free definable group. For each positive integer $n$ the definable map $G \to G$ given by $g \mapsto g^n$ is a bijection. That is, $G$ is uniquely divisible.
\end{fact}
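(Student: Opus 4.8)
The plan is to prove injectivity and surjectivity separately, treating the abelian situation that is relevant for the additive group of a ring and writing the operation multiplicatively to match the statement. Injectivity of $g \mapsto g^n$ is immediate: if $g^n = h^n$ then $(gh^{-1})^n = e$, so $gh^{-1}$ lies in the $n$-torsion of $G$, which is trivial by torsion-freeness; hence $g = h$. All the work is therefore in surjectivity, and the strategy is to (i) prove that a \emph{definably connected} abelian definable group is divisible, and then (ii) show that torsion-freeness forces $G$ itself to be definably connected, so that (i) applies.

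For step (i), let $H$ be definably connected and fix $n \ge 1$. The power map $p_n \colon H \to H$, $h \mapsto h^n$, is a definable endomorphism, so its image $p_n(H)$ is a definable subgroup and its kernel is the set of elements killed by $n$. That kernel has exponent $n$, hence is finite by \Cref{fact:finite-torsion}, so it has dimension $0$; by the o-minimal dimension formula for definable homomorphisms, $\dim p_n(H) = \dim H$. A definable subgroup of full dimension contains an open neighbourhood of the identity and is therefore open, hence clopen, and a nonempty clopen subgroup of a definably connected group is the whole group; thus $p_n(H) = H$ and $H$ is divisible.

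For step (ii), apply (i) to the definably connected component $G^{0}$: it is divisible, so by \Cref{fact:divisible-split} we may write $G = G^{0} \oplus B$ for some subgroup $B \cong G/G^{0}$, which is finite since $G^0$ has finite index. As a subgroup of the torsion-free group $G$, the group $B$ is itself torsion-free, and a finite torsion-free group is trivial; hence $B = \{e\}$ and $G = G^{0}$ is definably connected. Combining the two steps, $G$ is divisible by (i) while the power map is injective, so $g \mapsto g^{n}$ is a bijection. The only delicate point is the surjectivity in (i): it rests on the finiteness of bounded-exponent torsion (\Cref{fact:finite-torsion}) to force the image to have full dimension, together with the standard o-minimal fact that a full-dimensional definable subgroup of a definably connected group is everything. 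If the statement is intended for genuinely non-abelian $G$, then $p_n(G)$ need no longer be a subgroup and this argument breaks down; one would then have to invoke the structure theory of torsion-free definable groups in the o-minimal setting, and that is where the real obstacle would lie.
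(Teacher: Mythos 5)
The paper does not prove this statement at all: it is listed among the background facts on definable groups (the general result is Strzeb\'onski's, proved via o-minimal Euler characteristic), so there is no in-paper argument to compare against. On its own terms, your proof is correct and complete \emph{for abelian} $G$: injectivity from torsion-freeness, surjectivity of $p_n$ on $G^0$ via $\dim \ker p_n = 0$ (using \Cref{fact:finite-torsion}) plus the fact that a full-dimensional definable subgroup of a definably connected group is everything, and then $G = G^0$ via \Cref{fact:divisible-split} and the triviality of a finite torsion-free complement. This covers every use the paper actually makes of \Cref{fact:tf-divisible}, since it is only ever applied to $(R,+)$ and its subgroups.

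However, the statement as written is about an arbitrary torsion-free definable group, and there your argument has the gap you yourself flag, in two places rather than one: not only is $p_n(G)$ no longer a subgroup, but the very first line of the injectivity proof ($g^n=h^n \Rightarrow (gh^{-1})^n=e$) already uses commutativity. So as a proof of the Fact as stated it is incomplete; the non-abelian case cannot be patched by these elementary means and requires the Euler-characteristic machinery (if $G$ has no $p$-torsion then $x\mapsto x^p$ is a bijection). A clean way to present your contribution would be either to restrict the hypothesis to abelian $G$ (harmless for this paper) or to cite Strzeb\'onski for the general case and give your argument as the self-contained proof of the abelian instance actually used.
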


\begin{fact} \label{fact:1-dim} \label{fact:1dimtf}  \cite{PS99}
Every definable group that is not definably compact contains a $1$-dimensional torsion-free definable subgroup.
\end{fact}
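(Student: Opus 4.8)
The statement is the theorem of Peterzil and Steinhorn, and the plan is to reconstruct their argument. The first move is a reduction: it suffices to produce a definable $1$-dimensional subgroup $H \leq G$ that is \emph{not} definably compact. Indeed, a $1$-dimensional definable group is either definably compact, in which case it carries non-trivial torsion (it looks like a ``circle''), or it is not definably compact, in which case it is torsion-free. So any non-definably-compact $H$ of dimension $1$ is automatically torsion-free, and by \Cref{fact:tf-divisible} it is then uniquely divisible, which is exactly the object we are after.

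To find such an $H$, I would start from the curve characterisation of definable compactness: $G$ fails to be definably compact precisely when some definable curve $\gamma \colon (0,1) \to G$ has no limit in $G$ as $t \to 1^-$. Replacing $G$ by its definably connected component $G^0$ (of finite index, and with the same definable-compactness status) I may assume $G$ is definably connected, so that every proper definable subgroup has strictly smaller dimension. I would then induct on $\dim G$: the base case $\dim G = 1$ is the classification quoted above, and in the inductive step one either finds a proper definable subgroup that is still not definably compact --- to which the inductive hypothesis applies --- or, failing that, builds the $1$-dimensional subgroup directly out of the asymptotic behaviour of $\gamma$ near $1$.

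The heart of the argument, and the step I expect to be the main obstacle, is this direct construction. I would pass to a sufficiently saturated elementary extension $\mathcal U \succeq \M$; there $\gamma$ acquires a genuine limit $c \in G(\mathcal U)$, and because $\gamma$ escapes every definably compact subset of $G$, the point $c$ is not infinitesimally close to any element of $G(\M)$. Working with the infinitesimal subgroup $\mu = \bigcap_U U$ (the intersection of the definable neighbourhoods of the identity) and the translation action of the $\gamma(t)$ on it, one uses the monotonicity theorem to organise the one-parameter family $\{\gamma(t)\}$ into a definable homomorphic image of an unbounded interval and then closes it up into an honest definable subgroup. The delicate points are exactly here: verifying closure under the group operation and inversion, controlling the dimension so that it is exactly $1$, and checking that the subgroup produced inherits the unbounded (hence torsion-free) order type rather than collapsing to something definably compact. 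This is precisely the content of \cite{PS99}, and it is where the full o-minimal toolkit --- cell decomposition, curve selection, and the theory of infinitesimal neighbourhoods in definable groups --- is indispensable.
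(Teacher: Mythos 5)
This statement is imported verbatim from \cite{PS99}: the paper offers no proof of its own, so there is nothing internal to compare your argument against line by line. Judged as a standalone reconstruction of the Peterzil--Steinhorn theorem, your outline is faithful in its broad strokes. The opening reduction is sound: a $1$-dimensional definably connected group is torsion-free iff it is not definably compact, so it suffices to exhibit a $1$-dimensional definable subgroup that is not definably compact (and then pass to its definably connected component, a small point you should make explicit, since a non-connected $1$-dimensional $H$ can still carry torsion in its finite part). The use of the curve characterisation of definable compactness and the passage to a saturated extension are also the right ingredients.

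The genuine gap is the one you name yourself: the entire content of the theorem lives in the ``direct construction'' step, and your proposal does not carry it out --- it describes what needs to happen (organise the germ of $\gamma$ at the bad endpoint into a one-parameter subgroup, verify closure under the operation and inversion, control the dimension, and check non-compactness of the result) and then states that ``this is precisely the content of \cite{PS99}.'' Deferring the crucial step to the very reference whose theorem you are proving means the writeup is a proof sketch, not a proof. The dichotomy driving your induction is also not load-bearing as stated: for a definably connected non-compact $G$ there is no a priori reason a proper non-compact definable subgroup exists (and for definably simple or $1$-dimensional-subgroup-free candidates the first horn is vacuous), so essentially every instance falls into the second horn, i.e.\ the unproved construction. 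To close the gap you would need to actually build the subgroup, e.g.\ by considering the elements $\gamma(t)^{-1}\gamma(s)$ for $t,s$ near the missing endpoint, extracting from them (via the monotonicity theorem and the infinitesimal subgroup $\mu$) a type-definable torsion-free one-parameter piece, and then showing its definable hull is a $1$-dimensional definable subgroup that is not definably compact. As it stands, the proposal correctly identifies the architecture of the known proof but does not supply its core.
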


Recall that every group $G$ definable in an o-minimal structure contains a \textbf{maximal normal definable torsion-free subgroup} that we denote by $\bf{\mtf(G)}$. When $G$ is abelian, then $G = \mtf(G) + S$, where $S$ is the unique $0$-Sylow subgroup of $G$.

\begin{prop}\label{prop:subgr}
Let $\Rs = (R, +, 0, \cdot)$ be a definable ring. Set $G = (R, +)$. The following definable subgroups of $G$ are ideals in $\Rs$:

\begin{enumerate}[(i)]

\item The $n$-torsion subgroup $T_n$, for each $n \in \N$;  

\item the definably connected component $R^0$ of $0$;

\item the maximal torsion-free definable subgroup $\mtf(G)$;

\item the $0$-Sylow subgroup $S$.
\end{enumerate}

Moreover, $T_n \se \Ann(R^0)$.  If $R$ is definably connected,  
\[
\Ann(R) = (\Ann(\mtf(G)) \cap \mtf(G)) + S.
\]
\medskip
 It follows that $\Ann(R)$ is definably connected, the additive subgroup of $R/\Ann(R)$ is torsion-free, and $R^2 \se \mtf(G)$.

 \end{prop}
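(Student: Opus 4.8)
The plan is to reduce every ideal claim to the single observation that, for a fixed $r \in R$, left multiplication $\lambda_r \colon x \mapsto rx$ and right multiplication $\rho_r \colon x \mapsto xr$ are definable endomorphisms of the abelian group $G = (R,+)$ (this is exactly left/right distributivity). A definable subgroup $H$ is then an ideal precisely when $\lambda_r(H) \se H$ and $\rho_r(H) \se H$ for all $r$, and I will verify this by tracking which group-theoretic features (definable connectedness, divisibility, torsion-freeness, definable compactness) survive under a definable homomorphism. Throughout I will use that a definable torsion-free group is definably connected and, by \Cref{fact:tf-divisible}, uniquely divisible, and that a definably connected abelian group is divisible.

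For (i) the computations $n(ra) = r(na) = 0$ and $n(ar) = (na)r = 0$ show that $\lambda_r, \rho_r$ preserve $T_n$. For (ii), $\lambda_r(R^0)$ is a definably connected subgroup containing $0$, hence contained in the identity component $R^0$ (similarly for $\rho_r$). The key step for (iii) is a small lemma: the image of a torsion-free definable group under a definable homomorphism $f$ is again torsion-free. Indeed $\ker f$ is a definable torsion-free subgroup, so it is divisible; if $n f(x) = 0$ then $nx \in \ker f$, divisibility yields $z \in \ker f$ with $nz = nx$, and torsion-freeness forces $x = z \in \ker f$. Applying this to $\lambda_r$ and $\rho_r$, the subgroups $r\,\mtf(G)$ and $\mtf(G)\,r$ are torsion-free and definable, hence contained in the maximal such subgroup $\mtf(G)$.

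Part (iv) is where the real work lies, and I expect it to be the main obstacle. Here I will use the structural input that the $0$-Sylow $S$ is definably compact and that $G = \mtf(G) \oplus S$. Let $p \colon G \to \mtf(G)$ be the projection with kernel $S$. Then $p \circ \lambda_r|_S \colon S \to \mtf(G)$ is a definable homomorphism whose image is, on one hand, definably compact (a definable image of a definably compact group) and, on the other hand, a subgroup of the torsion-free group $\mtf(G)$; since a non-trivial definably compact definably connected abelian group has non-trivial torsion, such an image must be finite and torsion-free, hence trivial. Therefore $\lambda_r(S) \se \ker p = S$, and symmetrically $\rho_r(S) \se S$, so $S$ is an ideal. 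The genuine content is this compactness/torsion input, everything else being formal.

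For the remaining assertions: given $a \in T_n$, the subgroup $aR^0$ is the image of the divisible group $R^0$ under $\lambda_a$, hence divisible, while $n(ax) = (na)x = 0$ shows $aR^0 \se T_n$; a divisible group of finite exponent is trivial, so $aR^0 = \{0\}$ and likewise $R^0 a = \{0\}$, giving $T_n \se \Ann(R^0)$. When $R$ is definably connected, $R = R^0$, so every $T_n \se \Ann(R)$; since the torsion of the definably compact connected group $S$ is dense in $S$ and $\Ann(R)$ is a closed (definable) subgroup, this forces $S \se \Ann(R)$. Decomposing along $G = \mtf(G) \oplus S$ and using $S \se \Ann(R)$ then identifies $\Ann(R)$ with $(\Ann(\mtf(G)) \cap \mtf(G)) + S$; this is definably connected as a sum of two definably connected subgroups, the quotient $R/\Ann(R) \cong \mtf(G)/(\Ann(\mtf(G)) \cap \mtf(G))$ is torsion-free as a definable quotient of the torsion-free group $\mtf(G)$, and finally in any product $xy$ the cross terms involving an $S$-component vanish because $S \se \Ann(R)$, leaving $xy \in \mtf(G)$, i.e.\ $R^2 \se \mtf(G)$.
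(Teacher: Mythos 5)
Your handling of (i)--(iii), of $T_n \se \Ann(R^0)$, and of the closing assertions is correct and runs essentially parallel to the paper's argument (your explicit proof that a definable homomorphic image of a torsion-free definable group is torsion-free, via unique divisibility of the kernel, fills in a step the paper only asserts). The genuine gap is in part (iv) and in the step $S \se \Ann(R)$. Both of your arguments there rest on two structural assumptions the paper never makes and which fail in general: that $G = \mtf(G) \oplus S$ is a \emph{direct} sum admitting a definable projection $p$ with kernel $S$, and that the $0$-Sylow $S$ is definably compact with dense torsion. The paper deliberately writes $G = \mtf(G) + S$, not $\oplus$: the extension $0 \to \mtf(G) \to G \to G/\mtf(G) \to 0$ need not split definably, and when it does not, the $0$-Sylow --- which is the smallest definable subgroup containing the torsion of $G^0$ --- meets $\mtf(G)$ in a positive-dimensional definable torsion-free subgroup, and is therefore \emph{not} definably compact. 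In that situation your projection $p$ does not exist and the compactness input is unavailable. Even where $S$ happens to be definably compact, the claims that its torsion is dense and that a definably compact subgroup of a torsion-free definable group must be trivial are appeals to the torsion-point theorem for definably compact groups, a heavy input that the paper (which works over arbitrary o-minimal structures, not just expansions of fields) never invokes.

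The paper's own route avoids all of this. For (iv) it uses that the image of a $0$-group under a definable homomorphism is again a $0$-group, so $xS$ and $Sx$ lie in the unique maximal $0$-subgroup $S$ of the abelian $G$. For $S \se \Ann(R)$ it uses that $S$ is the definable subgroup \emph{generated} by the torsion of $G^0$, that every torsion element lies in $\Ann(R^0) = \Ann(R)$ by (i), and that $\Ann(R)$ is a definable subgroup, hence contains $S$. Your argument can be repaired in the same spirit without any compactness: your computation in (i) shows $\lambda_x$ and $\rho_x$ carry torsion into torsion, hence carry the definable subgroup generated by the torsion into itself, giving (iv); and $S \se \Ann(R)$ then follows because $\Ann(R)$ is a definable subgroup containing the generating set. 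Note also that the identity $\Ann(R) = (\Ann(\mtf(G)) \cap \mtf(G)) + S$ and the subsequent claims do not require the sum $\mtf(G) + S$ to be direct, so nothing is lost by dropping that assumption.
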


\begin{proof}
\begin{enumerate}[(i)]

\item Suppose $x$ is a torsion element of $(R, +)$ and let $n$ be a positive integer such that $nx = 0$.  By distribution, 
$n(xa) = (nx) a = 0 = a (nx) = n(ax)$ for each $a \in R$. Therefore, both $xR$ and $Rx$ are contained in the $n$-torsion subgroup of $(R, +)$, which is finite by \Cref{fact:finite-torsion}. As $xR^0$ and $R^0x$ are definably connected, they must be trivial, and $x \in \Ann(R^0)$.

\medskip
\item Because $R^0$ is divisible, there is a finite subgroup $F$ of $G$ such that $G = F \oplus R^0$. As $F$ is contained in the torsion subgroup of $G$, $F \subseteq \Ann(R^0)$ by (i), and $R^0$ is an ideal.

\medskip
\item For each $x \in R$, the maps $G \to G$, given by $g \mapsto xg$ and $g \mapsto gx$ are definable homomorphisms. Since the image of a torsion-free definable group by a definable homomorphism is torsion-free, $xA + Ax \subseteq A$ for each $x \in R$, and $A$ is an ideal.

\medskip
\item The image of a $0$-group by a definable homomorphism is a $0$-group. It follows that $xS + Sx \subseteq S$ for each $x \in R$, and $S$ is an ideal.

\end{enumerate}

\medskip
Suppose $R$ is definably connected. Set $A = \mtf(R)$. Then $R = A + S$. Because $S$ is the definable subgroup generated by the torsion of $G^0$, $S \subseteq \Ann(R)$ by (i).

Let $r \in \Ann(R)$, $r = a + s$, with $a \in A$ and $s \in S$. Since $s \in \Ann(R)$, $a \in \Ann(R)$. In particular, $a \in \Ann(A)$ and $\Ann(R) \subseteq \Ann_A(A) + S$. The other inclusion follows from $S \subseteq \Ann(R)$ and $R = A + S$.

If $r_i = a_i + s_i$ for $i = 1, 2$, then $r_1r_2 = a_1a_2$, because $S \se \Ann(R)$, so $R^2 \se A$, as required.
\end{proof}

\begin{cor}\label{cor:null-tf}
Let $R$ be a definably connected ring. Then $R$ is a null ring if and only if $\mtf(R, +)$  is a null ring.
\end{cor}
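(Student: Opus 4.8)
The plan is to deduce this directly from \Cref{prop:subgr}, which already carries out essentially all the work; only a short reduction remains.

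The forward direction is immediate: $\mtf(R, +)$ is (the additive group of) a subring of $R$, so if $R^2 = \{0\}$ then a fortiori the product of any two elements of $\mtf(R, +)$ vanishes, i.e.\ $\mtf(R, +)$ is a null ring. No hypothesis beyond being a subring is used here.

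For the converse I would invoke the structural decomposition supplied by \Cref{prop:subgr}. Since $R$ is definably connected, set $A = \mtf(R, +)$ and let $S$ be the $0$-Sylow subgroup; then $R = A + S$ with $S \se \Ann(R)$. Take arbitrary $r_1, r_2 \in R$ and write $r_i = a_i + s_i$ with $a_i \in A$ and $s_i \in S$. Expanding $r_1 r_2 = a_1a_2 + a_1s_2 + s_1a_2 + s_1s_2$ and using $s_1, s_2 \in \Ann(R)$ to annihilate every term involving an $s_i$ gives $r_1 r_2 = a_1 a_2$ — precisely the computation made at the end of the proof of \Cref{prop:subgr}. If $A$ is a null ring then $a_1 a_2 = 0$, so $r_1 r_2 = 0$; as $r_1, r_2$ were arbitrary, $R^2 = \{0\}$ and $R$ is a null ring.

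I do not expect a genuine obstacle here: the entire content sits in the decomposition $R = A + S$ with $S \se \Ann(R)$ and the resulting fact that products reduce to products inside $A$. The one thing worth stating explicitly is the legitimacy of restricting attention to $A$, which is exactly what the inclusion of the $0$-Sylow subgroup in the annihilator guarantees; once that is in hand, the corollary is a one-line consequence of the displayed reduction $R^2 \se \mtf(R,+)$ established in \Cref{prop:subgr}.
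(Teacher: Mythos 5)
Your proof is correct and follows exactly the route the paper intends: the corollary is stated without separate proof precisely because it is the immediate consequence of the decomposition $R = A + S$ with $S \se \Ann(R)$ and the computation $r_1r_2 = a_1a_2$ carried out at the end of the proof of \Cref{prop:subgr}. Nothing is missing.
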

There are (even commutative unital) ring containing  elements that are not units neither zero-divisors. Every ring that is not isomorphic to its total ring of fractions is an example. But in 
definable rings this cannot happen.

\begin{lem}\label{lem:zero-unit}
Let \rng\ be a definable  ring. Every non-zero $a \in R$ is either a unit or a zero-divisor. In particular, if $\Rs$ is not unital, every element is a zero-divisor.
\end{lem}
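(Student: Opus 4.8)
The plan is to translate the zero-divisor hypothesis into injectivity of the left- and right-multiplication maps, use o-minimality to upgrade injectivity to bijectivity, and then build a two-sided identity and an inverse for $a$ by hand. So I would fix a non-zero $a \in R$ and assume $a$ is \emph{not} a zero-divisor, aiming to show $a$ is a unit.

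First I would consider the two definable maps on the additive group $G = (R, +)$ given by left and right multiplication, $\lambda_a : x \mapsto ax$ and $\rho_a : x \mapsto xa$. By distributivity these are endomorphisms of $G$, and by the definitions of the annihilators we have $\ker \lambda_a = \Ann_2(a)$ and $\ker \rho_a = \Ann_1(a)$. Thus saying that $a$ is neither a left nor a right zero-divisor is exactly saying $\Ann_2(a) = \Ann_1(a) = \{0\}$, i.e. that both $\lambda_a$ and $\rho_a$ are injective.

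The key step, and the only genuinely o-minimal input, is that an injective definable endomorphism $f$ of a definable group $G$ is automatically surjective. Indeed, a definable injection preserves dimension, so $\dim f(G) = \dim G$ and the definable subgroup $f(G)$ is open, hence contains the connected component $G^0$; since $f(G^0)$ is definably connected of full dimension in $G^0$ we get $f(G^0) = G^0$, and the map induced on the finite quotient $G/G^0$ is injective, hence bijective, which forces $f(G) = G$. Note that this handles torsion and finite rings uniformly, so no connectedness or infiniteness hypothesis on $R$ is needed. Applying it to $\lambda_a$ and $\rho_a$ yields $aR = R$ and $Ra = R$.

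It remains to manufacture a unit from these two surjections, which is a purely formal ring manipulation with no further appeal to definability. From $a \in aR$ pick $e$ with $ae = a$; writing an arbitrary $r = ya$ (possible since $Ra = R$) gives $re = y(ae) = ya = r$, so $e$ is a right identity, and symmetrically $Ra = R$ yields a left identity $e'$, whence $1 := e = e'e = e'$ is a two-sided identity. Finally $1 \in aR \cap Ra$ gives $b, c$ with $ab = 1 = ca$, and $c = c(ab) = (ca)b = b$ shows $a$ is a unit. The last sentence is then immediate: if $\Rs$ has no identity it has no units, so by the contrapositive every non-zero element must be a zero-divisor. The main obstacle is purely the injective-to-surjective passage; everything surrounding it is bookkeeping.
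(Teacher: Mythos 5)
Your proof is correct, and its core is the same as the paper's: o-minimal dimension theory upgrades the injectivity of $\lambda_a$ and $\rho_a$ (equivalently $\Ann_2(a)=\Ann_1(a)=\{0\}$) to $aR=R=Ra$, after which the identity and the inverse of $a$ are produced by purely formal manipulations. Where you genuinely diverge is in the treatment of the disconnected case. The paper first proves the statement for definably connected $R$ (there $aR$ is a definable subgroup of full dimension in a definably connected group, hence all of $R$), and then handles general $R$ by splitting $R=F\oplus R^{0}$ with $F$ finite, invoking $F\subseteq\Ann(R^{0})$ from \Cref{prop:subgr} and the classical fact that in a finite ring every element is a unit or a zero-divisor. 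You instead prove once and for all that an injective definable endomorphism of a definable abelian group is surjective: full dimension of the image forces it to contain $G^{0}$, injectivity gives $f(G^{0})=G^{0}$, and the induced injection on the finite quotient $G/G^{0}$ is then a bijection. This is a clean and correct argument (your word ``open'' is really ``finite index,'' but the conclusion $G^{0}\subseteq f(G)$ is what matters), and it buys uniformity: no case split, no appeal to the structure of finite rings, and no need for the decomposition $R=F\oplus R^{0}$. The paper's route, on the other hand, reuses machinery (\Cref{prop:subgr}, the splitting off of $R^{0}$) that it needs elsewhere anyway. Your derivation of the two-sided identity is also slightly different in flavor (you use $Ra=R$ to verify that $e$ with $ae=a$ is a right identity, rather than cancelling by $a$), but both are routine. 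No gaps.
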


\begin{proof}
Suppose $R$ is definably connected and let $a \in R$, $a \neq 0$. We want to show that if $a$ is not a zero-divisor, then $R$ is unital and $a$ is a unit.

Because $a$ is not a left zero divisor, $\Ann_2(a) = \{0\}$ and $\dim aR = \dim R$. Therefore $aR = R$, as $R$ is definably connected. In particular, $ax = a$ for some $x \in R$. For each $y \in R$,
$axy = ay$, so $a(xy - y) = 0$. Since $a$ is not a zero-divisor, $x$ is a left identity.

Because $a$ is not a right zero divisor, $\Ann_1(a) = \{0\}$, $\dim Ra = \dim R$ and $Ra = R$. In particular, $a = ya$ for some $y \in R$. For each $z \in R$, 
$za = zya$, so $(z - zy)a = a$. Since $a$ is not a zero-divisor, $y$ is a right identity.

We have $xy = y = x = 1$. Moreover, from $aR = R = Ra$ we know there are $u, v \in R$ such that $au = 1 = va$.
Thus $vau = u = v$ and $a$ is a unit.

Suppose $R$ is not definably connected. There is a finite additive subgroup $F$ such that 
$R = F \oplus R^0$. Write $a = x + y$, with $x \in F$, $y \in R^0$.

By the connected case above, $y$ is either a unit or a zero-divisor in $R^0$. If $y$ is a zero-divisor, then $a$ is a zero-divisor too, because $F \se \Ann(R^0)$ by \Cref{prop:subgr}. 

If $y$ is a unit, then $R^0$ is unital. By \Cref{prop:subgr}, $(R^0, +)$ is torsion-free, $F$ is an ideal and $R = F \times R^0$. It is well-known that in a finite ring every element is either a unit or a zero-divisor. If $x$ is a unit in $F$, then $a$ is a unit too. If $x$ is a zero-divisor in $F$, then so is $a$ in $R$.
\end{proof}
 
\begin{prop}\label{prop:1-dim}
 Every $1$-dimensional definably connected ring is either a null ring or a real closed field.
\end{prop}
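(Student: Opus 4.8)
The plan is to peel off the additive structure first, then rule out zero-divisors, upgrade $R$ to a division ring, force commutativity, and finally pin down real-closedness.

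First I would observe that if $R^2=\{0\}$ there is nothing to prove, so I assume $R$ is not a null ring. By \Cref{cor:null-tf} this means $\mtf(R,+)$ is not a null ring, and in particular $\mtf(R,+)\neq\{0\}$. Since $\mtf(R,+)$ is then a positive-dimensional definable subgroup of the $1$-dimensional definably connected group $(R,+)$, its definably connected component is all of $(R,+)$, forcing $\mtf(R,+)=(R,+)$; thus $(R,+)$ is torsion-free. The structural remark I will lean on repeatedly is the resulting dichotomy: every definable subgroup of $(R,+)$ is either finite, and hence trivial since $(R,+)$ is torsion-free, or $1$-dimensional and equal to all of $(R,+)$.

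Next I would rule out zero-divisors. The annihilators $\Ann_1(R)$ and $\Ann_2(R)$ are definable ideals, hence $\{0\}$ or $R$ by the dichotomy; as $R^2\neq\{0\}$, neither can be all of $R$, so both vanish. If some $a\neq0$ were a left zero-divisor, then $\Ann_2(a)$ would be a nonzero definable subgroup and therefore all of $R$, giving $aR=\{0\}$ and $a\in\Ann_1(R)=\{0\}$, a contradiction; the right-hand case is symmetric. So $R$ has no zero-divisors, and \Cref{lem:zero-unit} then makes every non-zero element a unit, i.e. $R$ is a division ring. For commutativity I would use that for $a\neq0$ the commutative definable subring $R(a)$ contains $\la a\df$, which is a nonzero definable subgroup and hence equal to $(R,+)$; therefore $R(a)=R$, so $R$ is commutative and thus an infinite definable field.

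The final step is to identify this field. Here I would invoke the classification of infinite fields definable in an o-minimal structure---they are real closed or algebraically closed---together with the fact that an algebraically closed field, being a degree-$2$ extension of a real closed subfield, has o-minimal dimension at least $2$; since $\dim R=1$, the algebraically closed case is excluded and $R$ is real closed. I expect the bookkeeping obstacle to be the clean reduction to the torsion-free case and the annihilator dichotomy underlying the whole argument, while the genuinely delicate point is the concluding step: it is precisely the dimension count $\dim R=1$ that rules out the algebraically closed alternative, and it relies on the external classification of definable fields.
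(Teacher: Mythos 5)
Your proof is correct and follows essentially the same route as the paper's: the dichotomy for definable subgroups of a $1$-dimensional definably connected group, commutativity via $R(a)=R$, and Pillay's theorem on definable fields for real closedness. Your explicit reduction to the torsion-free case via \Cref{cor:null-tf} and your appeal to \Cref{lem:zero-unit} to get a division ring are slightly more careful than the paper's terser handling of the same points, but the substance is identical.
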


\begin{proof}
Let $R$ be a $1$-dimensional definably connected ring. Because $R$ is definably connected, the smallest definable subring $R(x)$ containing $x$ is infinite for each $x \neq 0$. So $R(x) = R$ and $R$ is commutative.

Let $a \neq 0$. If $a^2 = 0$, then $a \in \Ann(a)$. It follows that $\Ann(a) = R$ and $a \in \Ann(R)$. Therefore, $\Ann(R) = R$ and $R$ is a null ring.

If $a^2 \neq 0$, $a \notin \Ann(a)$ and $\Ann(a) = \{0\}$. Therefore $aR = Ra = R$, $R$ is unital and $a$
is a unit. Similarly for any other $x \in R$, so $R$ is a field. 

By Pillay, every $1$-dimensional definable field in an o-minimal structure is real closed.
\end{proof}

We do not know whether every torsion-free abelian definable group is a direct sum of $1$-dimensional definable subgroups. When this is the case, we can infer similar properties on definable subgroups and complements.

\begin{lem}\label{lem:tf-splitting}
Let $(G, +)$ be an abelian torsion-free $n$-dimensional definable group. Suppose there are definable $1$-dimensional subgroups $A_1, \cdots, A_n$ such that
\[
G = A_1 \oplus \cdots \oplus A_n.
\]

\medskip
Then for each definable subgroup $H$ of $G$:

\begin{enumerate}[(i)]
\item there are $1$-dimensional definable subgroups $H_i$ such that
\[
H = H_1 \oplus \cdots \oplus H_s
\]

\item there are $1$-dimensional definable subgroups $B_i$ such that
\[
G = H \oplus B_1 \oplus \cdots \oplus B_k.
\]

\end{enumerate}
\end{lem}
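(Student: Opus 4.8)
The plan is to prove both parts by peeling off one $1$-dimensional definable direct summand at a time, using \Cref{fact:1-dim} to locate such a summand inside $H$ and the ambient decomposition $G = A_1 \oplus \cdots \oplus A_n$ to split it off \emph{definably}. Note first that each $A_i$, being a torsion-free definable group, is uniquely divisible by \Cref{fact:tf-divisible} and hence definably connected; the same applies to $G$, to $H$, and to every definable subgroup involved. For $j \in \{1, \dots, n\}$ let $\pi_j \colon G \to A_j$ be the projection with kernel $G_j = \bigoplus_{i \neq j} A_i$, which is again a torsion-free definable group of dimension $n-1$ that is a direct sum of $1$-dimensional definable subgroups, so it satisfies the hypotheses of the lemma.

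For part (i) I would induct on $n = \dim G$. If $H = \{0\}$ the empty direct sum works, so assume $\dim H \geq 1$. Since $H$ is torsion-free (a subgroup of $G$) and infinite, it is not definably compact, so by \Cref{fact:1-dim} it contains a $1$-dimensional definable subgroup $L$. As $L \neq \{0\}$ and $G = \bigoplus_i A_i$, some $\pi_j(L) \neq \{0\}$; then $\pi_j(L)$ is a nontrivial definable subgroup of the $1$-dimensional definably connected group $A_j$, hence $\pi_j(L) = A_j$, while $\ker(\pi_j|_L) = L \cap G_j$ is a proper definable subgroup of the $1$-dimensional group $L$ and is therefore trivial. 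Thus $\pi_j|_L \colon L \to A_j$ is an isomorphism. A coset argument then yields $H = (H \cap G_j) \oplus L$: the sum is direct because $L \cap (H \cap G_j) \subseteq L \cap G_j = \{0\}$, and it exhausts $H$ because for $h \in H$ there is $\ell \in L$ with $\pi_j(\ell) = \pi_j(h)$, whence $h - \ell \in H \cap G_j$. Since $H \cap G_j$ is a definable subgroup of $G_j$, the inductive hypothesis in dimension $n-1$ decomposes it into $1$-dimensional definable subgroups, and adjoining $L$ gives the decomposition of $H$.

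Part (ii) is the easy direction, which I would prove by induction on the codimension $n - \dim H$. If $\dim H = n$ then $H = G$ (both definably connected of the same dimension) and $k = 0$. Otherwise $H \neq G$, so not every $A_i$ is contained in $H$; fixing $i$ with $A_i \not\subseteq H$, the intersection $A_i \cap H$ is a proper definable subgroup of the $1$-dimensional definably connected group $A_i$, hence trivial. Therefore $H \oplus A_i$ is a direct sum of dimension $\dim H + 1$, and applying the inductive hypothesis to the definable subgroup $H \oplus A_i$ produces $1$-dimensional definable subgroups $B_1, \dots, B_{k-1}$ with $G = (H \oplus A_i) \oplus B_1 \oplus \cdots \oplus B_{k-1}$; setting $B_k = A_i$ completes the proof. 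In particular the complementary summands can be taken among the original $A_i$.

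The only genuinely nonformal input is the production of the $1$-dimensional definable subgroup $L$ inside $H$ in part (i); everything else is dimension counting together with definable connectedness of torsion-free groups. This is exactly where \Cref{fact:1-dim} enters, and the step that needs care is its hypothesis: I must record that the nontrivial torsion-free group $H$ is not definably compact, which holds because a definably compact definably connected group of positive dimension carries nontrivial torsion while $H$ is torsion-free. Once $L$ is available, the ambient decomposition of $G$ does the remaining work automatically, since a $1$-dimensional $L$ must project nontrivially onto some coordinate $A_j$ and can then be split off against the complementary summand $G_j$.
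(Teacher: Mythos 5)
Your proof is correct, but it is organized differently from the paper's. The paper proves $(ii)$ first, by a double induction: for $\dim H = 1$ it tests $H$ against $A_1 \oplus \cdots \oplus A_{n-1}$ and recurses on $\dim G$, and for $\dim H > 1$ it passes to the quotient $G/J$ by a $1$-dimensional definable subgroup $J \leq H$ and pulls the complement back; it then deduces $(i)$ from $(ii)$ by splitting a codimension-one definable subgroup $A$ of $H$ off against the complement produced by $(ii)$. You instead prove $(i)$ directly, by locating a $1$-dimensional definable $L \leq H$ via \Cref{fact:1dimtf}, showing some coordinate projection $\pi_j$ restricts to an isomorphism $L \to A_j$, and splitting $H = (H \cap G_j) \oplus L$; and you prove $(ii)$ independently by adjoining coordinates $A_i \not\subseteq H$ one at a time. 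The ingredients are the same (definable connectedness of torsion-free definable groups, the absence of proper nontrivial definable subgroups in dimension one, dimension counting), but your version has two small advantages: it avoids quotient groups entirely in $(ii)$ and shows the complementary summands $B_i$ may be chosen among the original $A_i$, and it does not need the existence of a codimension-one definable subgroup of $H$, which the paper's proof of $(i)$ uses without justification. You are also right to flag that applying \Cref{fact:1dimtf} to $H$ requires knowing that an infinite torsion-free definable group is not definably compact; the paper uses this silently in the same place.
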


\begin{proof}
We prove our claims by induction on $m = \dim H$, starting with $(ii)$. Suppose $m = 1$. We will show our claim by induction on $n = \dim G$. We can assume $n > 1$. The intersection
\[
H \cap (A_1 \oplus \cdots \oplus A_{n-1} )
\]

\medskip
is a definable subgroup of the $1$-dimensional torsion-free $H$, so it is either trivial or equal to $H$. In the first case, $G = H \oplus A_1 \oplus \cdots \oplus A_{n-1}$, and we are done. In the second case, 
$H \subset A_1 \oplus \cdots \oplus A_{n-1}$. By induction hypothesis,
\[
A_1 \oplus \cdots \oplus A_{n-1} = H \oplus B_1 \oplus \cdots \oplus B_{n-2}
\] 
so
\[
G = H \oplus B_1 \oplus \cdots \oplus B_{n-2} \oplus A_n,
\]
as we wanted. Suppose now $\dim H > 1$. Let $J$ be a $1$-dimensional definable subgroup of $H$ from \Cref{fact:1dimtf}. Set $\ol G = G/J$ and $\ol H = H/J$. By induction hypothesis, $\ol G = \ol H \oplus \ol K$
for some definable subgroup $\ol K$ that is a direct sum of $1$-dimensional definable subgroups. Let $K$ be the pre-image of $\ol K$ in $G$. By the previous $1$-dimensional case, 
$K = J \oplus B_1 \oplus \cdots \oplus B_k$, so
$G = H \oplus B_1 \oplus \cdots \oplus B_k$, as we wanted.

To show $(i)$ holds, 
suppose $\dim H > 1$ and let $A$ be a definable subgroup of $H$ of codimension $1$. By $(ii)$, 
$G = A \oplus B_m \oplus \cdots \oplus B_n$. The intersection 
$B = H \cap (B_m \oplus \cdots \oplus B_n)$ must be $1$-dimensional because of dimensions, 
$H = A \oplus B$ because $A \cap B$ is trivial, and $A$ is a direct sum of $1$-dimensional subgroups by induction hypothesis.
\end{proof}

Even if $G$ is not necessarily a direct sum of $1$-dimensional definable subgroups, \Cref{lem:tf-splitting} holds abstractly:

\begin{lem}\label{lem:tf-splitting-abstractly}
Let $(G, +)$ be an abelian torsion-free definable group. For each definable subgroup $H$ there are subgroups $A_1, \dots, A_k$ abstractly isomorphic to $1$-dimensional definable groups such that
\[
G = H \oplus A_1 \oplus \cdots \oplus A_k
\] 
and for each $i = 1, \dots, k-1$ the subgroup $H \oplus A_1 \oplus \cdots \oplus A_i$ is definable.
\end{lem}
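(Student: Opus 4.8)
The plan is to induct on $k = \dim G - \dim H$. If $k = 0$, then $H$ is a finite-index definable subgroup of $G$, and $G$ is definably connected (a torsion-free definable abelian group is definably connected: writing $G = G^0 \oplus F$ with $F$ finite by \Cref{fact:divisible-split}, torsion-freeness forces $F = 0$), so $H = G$ and there is nothing to split. The heart of the matter is the inductive step, and its key point is the observation that makes \Cref{fact:1-dim} applicable to the quotient: because $H$ is a torsion-free definable group it is definably connected, hence divisible, and a quotient of a torsion-free group by a divisible subgroup is again torsion-free. Indeed, if $n\bar g = 0$ in $G/H$ then $ng \in H$, and dividing in $H$ produces $h$ with $nh = ng$, whence $n(g-h) = 0$ and torsion-freeness of $G$ gives $g \in H$; so $G/H$ is torsion-free.

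For the inductive step ($k \ge 1$), since $G/H$ is torsion-free of positive dimension it is not definably compact, so by \Cref{fact:1-dim} it contains a $1$-dimensional torsion-free definable subgroup $\bar A$. Let $H_1 = \pi^{-1}(\bar A)$ for $\pi\colon G \to G/H$ the projection; then $H_1$ is a definable subgroup with $H \subsetneq H_1$, $\dim H_1 = \dim H + 1$, and $H_1/H \cong \bar A$. Now $H_1$ is torsion-free and definable, hence divisible and torsion-free, that is, a $\Q$-vector space, in which $H$ is a $\Q$-subspace; choosing a complementary subspace $A_1$ gives $H_1 = H \oplus A_1$ with $A_1 \cong H_1/H \cong \bar A$ abstractly isomorphic to the $1$-dimensional definable group $\bar A$. (Equivalently, divisibility makes $H$ an injective $\Z$-module, so $0 \to H \to H_1 \to H_1/H \to 0$ splits.)

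Applying the inductive hypothesis to the pair $H_1 \subseteq G$, for which $\dim G - \dim H_1 = k-1$, yields $G = H_1 \oplus A_2 \oplus \cdots \oplus A_k$ with each partial sum $H_1 \oplus A_2 \oplus \cdots \oplus A_i$ definable. Combining with $H_1 = H \oplus A_1$ gives $G = H \oplus A_1 \oplus \cdots \oplus A_k$, and each partial sum $H \oplus A_1 \oplus \cdots \oplus A_i$ equals $H_1$ (when $i = 1$) or $H_1 \oplus A_2 \oplus \cdots \oplus A_i$ (when $i \ge 2$), hence is definable. I expect the one genuinely delicate point to be the torsion-freeness of $G/H$: without it the quotient could a priori be definably compact and contain no $1$-dimensional torsion-free subgroup, so \Cref{fact:1-dim} could not be invoked. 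It is precisely the divisibility of the definable subgroup $H$ that rules this out and keeps the entire induction inside the torsion-free world, where the needed complements are provided by linear algebra over $\Q$.
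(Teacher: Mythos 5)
Your proof is correct and follows essentially the same route as the paper, whose entire proof is the citation of \Cref{fact:1dimtf}, \Cref{fact:tf-divisible} and \Cref{fact:divisible-split} — exactly the three ingredients you combine (pull back a $1$-dimensional torsion-free definable subgroup of the torsion-free quotient, then split off the divisible subgroup $H$ abstractly, and induct on codimension). Your explicit verification that $G/H$ is torsion-free, so that \Cref{fact:1dimtf} applies, is a worthwhile elaboration of a step the paper leaves implicit.
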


\begin{proof}
By \Cref{fact:1dimtf}, \Cref{fact:tf-divisible} and \Cref{fact:divisible-split}. 
\end{proof}

%

\noindent
Recall that a $K$-vector space $V$ is an \textbf{associative $K$-algebra} when there is a multiplication $\cdot$ on $V$ making $(V, +, \cdot)$ a ring and $\cdot$ is compatible with the scalar multiplication $K \times V \to V$, $(k, v) \mapsto kv$. That is, for any $r, s \in K$ and $u, v \in V$,

\[
(ru) \cdot (sv) = (rs)(u \cdot v).
\]

 \medskip
 The most important example of associative $K$-algebra is the ring of $n \times n$ matrices $M_n(K)$ with usual scalar multiplication. In fact, every finite-dimensional associative $K$-algebra is isomorphic to a subalgebra of $M_n(K)$ for some $n \in \N$.

A $K$-vector space $V$ is \emph{definable} when $K$ is a definable field, $V$ is a definable abelian group and the scalar multiplication map $K \times V \to V$, $(k, v) \mapsto kv$ is definable. Similarly, we say that $A$ is a \textbf{definable associative $K$-algebra} when $A$ is an associative $K$-algebra that is a definable ring and a definable $K$-vector space.  

%
%

\begin{prop} \label{prop:algebras}
Let $\Rs = (R, +, 0, \cdot)$ be a definable ring. If $R$ is a definable $K$-vector space for some definable real closed field $K$, then $R$ is a definable associative $K$-algebra.
\end{prop}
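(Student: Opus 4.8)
The plan is to verify the single compatibility axiom
\[
(ru)\cdot(sv)=(rs)\,(u\cdot v)
\]
for all $r,s\in K$ and $u,v\in R$. I would first reduce this to the two one-sided identities $(ru)\cdot v=r(u\cdot v)$ and $u\cdot(rv)=r(u\cdot v)$: granting both, and using that scalars from the field $K$ commute,
\[
(ru)\cdot(sv)=s\bigl((ru)\cdot v\bigr)=s\bigl(r(u\cdot v)\bigr)=(sr)(u\cdot v)=(rs)(u\cdot v).
\]
So it is enough to show that the ring multiplication is $K$-linear in each variable separately.

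Fix $u,v\in R$ and consider the two definable maps $g_1,g_2\colon K\to R$ defined by $g_1(r)=(ru)\cdot v$ and $g_2(r)=r(u\cdot v)$. Both are homomorphisms from $(K,+)$ to $(R,+)$: for $g_1$ this combines the vector-space identity $(r+s)u=ru+su$ with left distributivity of the ring product, while for $g_2$ it is one of the vector-space axioms. Furthermore $g_1(1)=u\cdot v=g_2(1)$. Thus the left identity will follow once we know that every definable group homomorphism from $(K,+)$ into the definable $K$-vector space $(R,+)$ is automatically $K$-linear, for then $g_1(r)=r\,g_1(1)=r(u\cdot v)=g_2(r)$. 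The right identity is obtained by the symmetric argument, holding $u$ fixed, letting the second factor vary, and invoking right distributivity.

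The crux, and the only place where o-minimality is used, is this automatic $K$-linearity. Because $R$ is a definable $K$-vector space it is finite-dimensional over $K$ (its dimension in \M\ is finite), and fixing a $K$-basis yields a definable $K$-vector-space isomorphism $R\cong K^n$; writing $g_1$ in these coordinates reduces the claim to the one-dimensional assertion that every \M-definable additive endomorphism of $(K,+)$ is multiplication by a scalar. This is a standard fact: the structure \M\ induces on the definable real closed field $K$ an o-minimal expansion of $K$, in which every definable function is piecewise continuously differentiable; if $f\colon K\to K$ is additive, then $f(x+h)-f(x)=f(h)$ shows that the difference quotient at $x$ does not depend on $x$, so $f'$ equals the constant $c:=f'(0)$ wherever it is defined, whence $f(x)=cx$ for all $x$.

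I expect this last fact to be the main obstacle; the remainder is a formal manipulation of the ring and vector-space axioms. The only subtlety worth stating explicitly is that, since \M\ is not assumed to expand a field, the differentiability argument must be carried out in the o-minimal field structure that \M\ induces on $K$, rather than in \M\ itself.
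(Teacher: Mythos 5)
Your proof is correct, but it takes a noticeably heavier route than the paper's. The paper also verifies the compatibility identity one scalar variable at a time, but the only o-minimal input it uses is that $(K,+)$, being $1$-dimensional, torsion-free and definably connected, has no proper non-trivial definable subgroup: for fixed $u, v, s$ the set of scalars $r$ satisfying the identity is a definable subgroup of $(K,+)$ containing $1$, hence all of $K$. In your notation the same soft argument applies directly to the equalizer $\{r \in K : g_1(r) = g_2(r)\}$, which is a definable subgroup of $(K,+)$ containing $1$; there is no need to prove that each $g_i$ is separately $K$-linear. By instead establishing the stronger statement that every definable additive map out of $(K,+)$ is automatically $K$-linear, you are forced to invoke (i) that $\M$ induces on the definable field $K$ an o-minimal expansion of $K$ --- a genuine theorem (Peterzil--Starchenko) that deserves a citation rather than a passing remark, especially since $\M$ here is an arbitrary o-minimal structure, not an expansion of a field --- and (ii) piecewise differentiability of definable functions over $K$ in that induced structure. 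Both inputs are true and your differentiation argument with them is sound, so the proof goes through; but the subgroup argument buys the same conclusion using nothing beyond DCC and the dimension/connectedness facts already set up in Section 2, which is presumably why the paper takes that route.
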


\begin{proof}
Let $1 \in K$ be the multiplicative identity. As $R$ is a $K$-vector space, $1x = x$ for each $x \in R$. For $R$ to be an associative $K$-algebra, we need
\[
(ru) \cdot (sv) = (rs)(u\cdot v)
\]

\medskip
for any $r, s \in K$ and $u, v \in R$. Fix $u, v \in R$, $s \in K$ and set
\[
G = \{r \in K: (ru)\cdot(sv) = (rs)(u\cdot v)\}.
\] 

\medskip
It is easy to check that $G$ is a definable subgroup of $(K, +)$. Define now
\[
H = \{s \in K: u \cdot (sv) = s(u\cdot v)\}.
\]

\medskip
Once again, $H$ is a definable subgroup of $(K, +)$. Since 
$u \cdot (1v) = u \cdot v = 1(u \cdot v)$, $1 \in H$. Because $(K, +)$ is $1$-dimensional torsion-free, $(K, +)$ has no proper non-trivial definable subgroup and $H = K$. Moreover, $1u = u$ and $1s = s$, so $1 \in G$ and $G = K$ too, as wanted. 
\end{proof}

  
\section{Nilpotent rings}

Let $R$ be an arbitrary ring. Recall that $a \in R$ is called a \textbf{nilpotent element} if $a^k = 0$ for some $k \in \N$. We will say that $a$ is \textbf{$\bf{k}$-nilpotent} when $a^k = 0$ and $a^{k-1} \neq 0$.

A \textbf{nil ring} is a ring whose elements are all nilpotent. A nil ring $R$ is a \textbf{nilpotent ring} when there is some $k \in \N$ such that $R^k = \{0\}$. That is, the product of any $k$ elements in $R$ is zero. The main example of a nil (and nilpotent) definably connected ring is the ring $T_n(K)$ of strictly upper triangular $n \times n$ matrices with coefficients in a real closed field $K$. We will show that every definably connected nil ring \Rs\ essentially embeds in a product of $T_{n_i}(K_i)$ for suitable \Rs-definable real closed fields $K_i$'s.

Not all nil rings are nilpotent rings, but we will show that all \emph{definable} nil rings are nilpotent. We will also prove that a definable ring is nilpotent if and only if $0$ is the only idempotent element. We start by finding an uniform bound for the nilpotency of the elements.

\begin{prop}\label{prop:bound}
Let $R$ be a definably connected $n$-dimensional ring. 

If $a \in R$ is a nilpotent element, $a^{n+1} = 0$.
If $R$ is unital, $a^n = 0$.
\end{prop}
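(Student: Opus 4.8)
The plan is to extract a uniform nilpotency bound from a descending chain of definable subgroups whose length is controlled by $\dim R = n$. The natural chain to consider is
\[
R \;\supseteq\; aR \;\supseteq\; a^2R \;\supseteq\; \cdots,
\]
where $a^kR = \{a^kr : r \in R\}$. Each $a^kR$ is the image of $(R,+)$ under the additive endomorphism $x \mapsto a^kx$, hence a definable subgroup; and since $R$ is definably connected, so is each $a^kR$. The inclusions hold because $a^{k+1}r = a^k(ar) \in a^kR$. The nilpotency of $a$ guarantees the chain eventually reaches $\{0\}$: if $a^k = 0$, then $a^kR = \{0\}$.

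Next I would use two facts of o-minimal group theory to bound the strict part of the chain. First, multiplying the equality $a^kR = a^{k+1}R$ on the left by $a$ shows $a^{k+1}R = a^{k+2}R$; hence once two consecutive terms coincide the chain is constant thereafter, and by the previous paragraph that constant value must be $\{0\}$. Second, a proper definable subgroup of a definably connected definable group has strictly smaller dimension. Consequently, as long as $a^kR \neq \{0\}$ the inclusion $a^{k+1}R \subseteq a^kR$ is strict and the dimension drops by at least one. Starting from $\dim R = n$, this forces $a^nR = \{0\}$: otherwise the strictly decreasing sequence $\dim a^0R > \dim a^1R > \cdots > \dim a^nR \geq 1$ would consist of $n+1$ distinct values in $\{1, \dots, n\}$, which is impossible.

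Finally I would read off the two bounds. In general, $a^{n+1} = a^n \cdot a \in a^nR = \{0\}$, giving $a^{n+1} = 0$. If $R$ is unital, then $a^n = a^n \cdot 1 \in a^nR = \{0\}$, which improves the bound to $a^n = 0$.

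I expect the only delicate point to be the asymmetry between the unital and non-unital cases: the sharper bound $a^n = 0$ relies on having $a^n \in a^nR$, which is immediate from the identity but can fail without it, forcing the weaker conclusion $a^{n+1} = a^n a \in a^nR$ in general. The remaining ingredients—definable connectedness of the images $a^kR$, the stabilization of the chain, and the dimension-drop property of proper definable subgroups—are routine once the chain is set up.
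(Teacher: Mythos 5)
Your proof is correct, and it takes a genuinely different route from the paper's. The paper argues by induction on $n$: writing $b = a^{k-1}$ for $k$ minimal with $a^k = 0$, it either descends to the proper definable subring $\Ann(b)$ (when $\Ann(b) \neq R$) or passes to the quotient $R/\Ann(R)$ (when $b \in \Ann(R)$), with the classification of $1$-dimensional rings from \Cref{prop:1-dim} as the base case. You instead run a single descending chain $R \supseteq aR \supseteq a^2R \supseteq \cdots$ of definably connected definable subgroups: the multiply-by-$a$ observation shows that once two consecutive terms agree the chain is constant, hence (by nilpotency of $a$) already zero, so the chain is strictly decreasing until it reaches $\{0\}$; since a proper definable subgroup of a definably connected group has strictly smaller dimension, the chain must die by step $n$. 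This is essentially a Fitting-lemma-style argument transported to the o-minimal setting, and it is more self-contained: it needs no induction, no case analysis on annihilators, and no appeal to the structure of $1$-dimensional rings. It also isolates cleanly why unitality sharpens the bound by exactly one: $a^n \in a^nR$ requires the identity, whereas $a^{n+1} = a^n \cdot a \in a^nR$ does not. The one step worth making explicit is that a nontrivial definably connected group is infinite, so that $\dim a^nR \geq 1$ in your pigeonhole count; this is immediate, but it is the hinge on which the dimension argument turns.
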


\begin{proof}
By induction on $n$. If $n=1$, because $a$ is a zero divisor, the multiplication on $R$ is trivial, and $a^2 = 0$. If $R$
is unital, $R$ is a real closed field (\Cref{prop:1-dim}) and $a = 0$ is its only nilpotent element.

Assume $n > 1$. Suppose $k$ is the smallest integer such that $a^k = 0$. Set $b = a^{k-1}$. Then $a \in \Ann(b)$.
If $\Ann(b) \neq R$, then $\dim \Ann(b) < n$, because $R$ is definably connected. By induction hypothesis, $a^n = 0$
and we are done.  

If $\Ann(b) = R$, $b \in \Ann(R)$ and $\Ann(R)$ is an infinite ideal of $R$. If $a \in \Ann(R)$, then $a^2 = 0$. 
Assume $a \notin \Ann(R)$. Then $\dim \Ann(R)=p < n$. Let $\bar{a}$ be the image of $a$ in the quotient ring $R/\Ann(R)$. By induction hypothesis, $\bar{a}^{p+1} = \bar{0}$. That is, $a^{p+1} \in \Ann(R)$ and $a^{p+2} = 0$. Because $p < n$, $p+2 \leq n + 1$.

Assume $R$ is unital, $\Ann(b) = R$ and $a \notin \Ann(R)$. Then the quotient ring $R/\Ann(R)$ is unital, $\bar{a}^p = \bar{0}$ by induction hypothesis and $a^n = 0$.  
\end{proof}

\begin{lem}\label{lem:generator-matters}
Let $\Rs = (R, +, 0, \cdot)$ be a definable ring such that $(R, +)$ is torsion-free. For any $x, y \in R$,
\[
\la x \df = \la y \df \ \Longrightarrow\ xR = yR\ \mbox{ and } \ Rx = Ry.
\]
\end{lem}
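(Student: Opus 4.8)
The plan is to reduce everything to the defining minimality property of $\la\,\cdot\,\df$, namely that $\la z\df$ is contained in \emph{every} definable subgroup of $(R,+)$ that contains $z$. Since the hypothesis $\la x\df=\la y\df$ is symmetric in $x$ and $y$, it suffices to prove the single inclusion $xR\se yR$; swapping $x$ and $y$ then yields the reverse inclusion, hence $xR=yR$, and the statement $Rx=Ry$ will follow by the mirror-image argument with right multiplication in place of left multiplication. So first I would fix $y$ and introduce the auxiliary set
\[
I=\{z\in R:\ zR\se yR\}.
\]

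The key observation is that $yR=\{yr:r\in R\}$ is already a definable subgroup of $(R,+)$: it is the image of the additive homomorphism $r\mapsto yr$, which is additive by left distributivity. Granting this, I would check that $I$ is a definable subgroup of $(R,+)$ containing $y$. Definability is immediate, since $I=\{z:\forall r\,(zr\in yR)\}$ and $yR$ is definable, and $y\in I$ is trivial. For closure under addition, if $z_1,z_2\in I$ then for every $r\in R$ right distributivity gives $(z_1+z_2)r=z_1r+z_2r$, and since $z_1r,z_2r\in yR$ and $yR$ is a subgroup, the sum lies in $yR$; closure under negation is the same computation. Thus $I$ is a definable subgroup containing $y$, so by minimality $\la y\df\se I$. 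Because $x\in\la x\df=\la y\df\se I$, we conclude $xR\se yR$, as wanted.

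For $Rx=Ry$ I would run the identical argument with $Ry=\{ry:r\in R\}$ — again a definable subgroup, now realized as the image of $r\mapsto ry$ — and the definable subgroup $J=\{z:Rz\se Ry\}$, obtaining $\la y\df\se J$ and hence $Rx\se Ry$. I do not expect a genuine obstacle: the only delicate point is recognizing the right auxiliary object, after which the argument rests on two easy verifications — that $yR$ (resp.\ $Ry$) is itself an additive subgroup, so that membership in it is preserved under sums, and that the resulting condition defines a subgroup to which the minimality of $\la y\df$ applies. The argument uses only distributivity, definability, and the existence of $\la\,\cdot\,\df$ (which rests on DCC); the standing torsion-freeness of $(R,+)$ plays no essential role in this particular implication.
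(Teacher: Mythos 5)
Your proof is correct, and it takes a genuinely different and noticeably shorter route than the paper's. The paper first reduces to showing $x\la a \df = y\la a \df$ for every $a\in R$ (using that $R$ is a finite sum of monogenic definable subgroups), and for a fixed $a$ it proves $\Ann_2(x)=\Ann_2(y)$, identifies $x\la a\df$ with $\la xa\df$, deduces $y\la a\df\se x\la a\df$, and then upgrades this inclusion to an equality by comparing dimensions --- a step that needs $(R,+)$ torsion-free so that every definable additive subgroup is definably connected. You instead observe that $yR$ is itself a definable additive subgroup (image of the additive map $r\mapsto yr$) and that $I=\{z\in R: zR\se yR\}$ is a definable additive subgroup containing $y$; minimality of $\la y\df$ then forces $x\in\la x\df=\la y\df\se I$, and symmetry in $x$ and $y$ plus the mirror argument for right multiplication finishes the proof. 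All the verifications you flag (closure of $yR$ under addition via left distributivity, closure of $I$ via right distributivity, first-order definability of $I$) are sound. What each approach buys: yours is more elementary and, as you correctly note, never uses torsion-freeness or any dimension/connectedness argument, so it proves the lemma for an arbitrary definable ring, relying only on DCC for the existence of $\la\cdot\df$; the paper's argument is longer but extracts finer byproducts along the way, namely $\Ann_2(x)=\Ann_2(y)$ and the monogenic refinement $x\la a\df=y\la a\df$ for each $a$, neither of which is needed for the stated conclusion or for the place where the lemma is applied.
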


\begin{proof}
We will show that for each $a \in R$, 
\[
x \la a \df = y \la a \df.
\]

This suffices, because if $\dim R = n$ there are $a_1, \dots, a_n \in R$ such that
\[
R = \la a_1 \df + \cdots + \la a_n \df.
\]

Indeed, let $a_1 \in R$, $a_1 \neq 0$ and set $A_1 = \la a_1 \df$. If 
$R = A_1$, any $a_2, \dots a_n$ will meet our requirement. Otherwise, let $a_2 \notin A_1$ and set $A_2 = \la a_2 \df$. Then $\dim (A_1 + A_2) \geq 2$. We iterate until we reach $n = \dim R$. 

Set $H = \la x \df = \la y \df$, fix $a \in R$ and set $A = \la a \df$.  If $xa = 0$, then $x \in \Ann_1(a)$ and $a \in \Ann_2(x)$. Therefore
$H \se \Ann_1(A)$ and $x A = y A = \{0\}$. Equivalently, $\Ann_2(x) = \Ann_2(y)$.

Suppose $xa = b \neq 0$. Set  $B = \la b \df$ and $G = \{r \in R: ra \in B\}$. 
As $G$ is a definable subgroup of $(R, +)$ containing $x$, $H \subseteq G$. Thus $Ha \se B$.
In particular, $ya \in B$.

Set $A' = \{r \in R: yr \in B\}$. Since $A'$ is a definable subgroup containing $a$, then  
$A \se A'$. In particular,  $yA \se B$.

On the other hand, $b \in xA \se B$, so $B = xA$. It follows that $yA \se xA$. Since $\Ann_2(x) = \Ann_2(y)$, $\dim yA = \dim xA$ and $yA = xA$, as required. The other equality is similar.
\end{proof}

 \begin{prop} \label{prop:nil-flag}
Let $\Rs = (R, +, 0, \cdot)$ be a $n$-dimensional definable nilpotent ring such that $(R, +)$ is torsion-free. There are definable ideals
\[
\{0\} = I_0 \subset I_1 \subset \dots \subset I_n = R
\]

\medskip
such that $\dim I_k = k$ and $I_{k+1}/I_k \subseteq \Ann(R/I_k)$ for all $k \in \{0, \dots, n-1\}$, and $\Ann(R) = I_k$ for some $k \in \{1, \dots, n\}$. 
 \end{prop}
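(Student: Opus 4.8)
The plan is to argue by induction on $n = \dim R$, arranging the flag so that one of its terms is exactly $\Ann(R)$; this is what will force $\Ann(R) = I_k$. Since $R$ is a nonzero nilpotent ring (so $n \ge 1$), its annihilator is positive-dimensional: choosing $m$ minimal with $R^m = \{0\}$, any nonzero $a \in R^{m-1}$ satisfies $aR, Ra \subseteq R^m = \{0\}$, hence $a \in \Ann(R)$, and as $(R,+)$ is torsion-free the definable subgroup $\la a \df \subseteq \Ann(R)$ is infinite. Set $p = \dim \Ann(R)$, so $1 \le p \le n$. I will also use repeatedly that $R/\Ann(R)$ is torsion-free: $\Ann(R)$ is torsion-free and definable, hence divisible by \Cref{fact:tf-divisible}, so by \Cref{fact:divisible-split} it splits off as a direct summand of $(R,+)$, and $R/\Ann(R)$ is isomorphic to a subgroup of the torsion-free group $R$. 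In particular $p = n$ holds precisely when $R/\Ann(R)$ is a $0$-dimensional torsion-free (hence trivial) group, i.e. when $R$ is a null ring.

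For the bottom of the flag I would stay inside $\Ann(R)$. Applying \Cref{lem:tf-splitting-abstractly} to the torsion-free definable group $\Ann(R)$ with trivial $H$ gives a chain of definable subgroups $\{0\} = I_0 \subset I_1 \subset \dots \subset I_p = \Ann(R)$ with $\dim I_j = j$. Each $I_j$ is automatically a two-sided ideal, since $R\,I_j + I_j\,R \subseteq R\,\Ann(R) = \{0\} \subseteq I_j$, and the centrality condition is immediate here: for $a \in \Ann(R)$ and $r \in R$ we have $ar = ra = 0$, so $\bar a\,\bar r = \bar r\,\bar a = \bar 0$ in $R/I_j$, giving $\Ann(R)/I_j \subseteq \Ann(R/I_j)$ and hence $I_{j+1}/I_j \subseteq \Ann(R/I_j)$ for every $j < p$. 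If $R$ is a null ring this already is the whole flag, with $\Ann(R) = R = I_n$.

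Otherwise $p < n$ and I pass to $\bar R = R/\Ann(R)$, which is again a definable nilpotent ring, is torsion-free by the splitting remark above, and has $\dim \bar R = n - p < n$. The induction hypothesis supplies a flag $\{0\} = \bar J_0 \subset \dots \subset \bar J_{n-p} = \bar R$ of definable ideals with $\dim \bar J_i = i$ and $\bar J_{i+1}/\bar J_i \subseteq \Ann(\bar R/\bar J_i)$. Pulling back along $\pi \colon R \to \bar R$, I set $I_{p+i} = \pi^{-1}(\bar J_i)$: these are definable ideals with $\dim I_{p+i} = p + i$, and the third isomorphism theorem gives a ring isomorphism $R/I_{p+i} \cong \bar R/\bar J_i$ carrying $I_{p+i+1}/I_{p+i}$ onto $\bar J_{i+1}/\bar J_i$, so the centrality condition transfers intact (the junction $j = p$ being the case $i = 0$). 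Concatenating the two parts yields the required flag $\{0\} = I_0 \subset \dots \subset I_n = R$, with $\Ann(R) = I_p$.

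I expect the main obstacle to be keeping the additive group torsion-free on passing to quotients, which is exactly what the induction hypothesis demands of $\bar R$; this is the role of \Cref{fact:tf-divisible} and \Cref{fact:divisible-split}, through the observation that a torsion-free definable subgroup is a direct summand. The remaining points — that preimages of ideals under $\pi$ are ideals, that dimensions add across $\pi$, and that the canonical quotient isomorphism preserves annihilators — are routine and I would not belabour them.
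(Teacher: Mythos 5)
Your proof is correct, and its skeleton is the same as the paper's: both arguments hinge on showing that $\Ann(R)$ is infinite (your direct argument via a nonzero $a \in R^{m-1}$ is the cleaner of the two; the paper reaches the same conclusion by contradiction) and then induct on dimension by peeling the flag off from inside the annihilator. The organization of the induction differs, though. The paper takes a single $1$-dimensional definable subgroup $I_1 \subseteq \Ann(R)$, passes to $R/I_1$, and pulls back the flag supplied by the induction hypothesis, so each step drops the dimension by exactly one; you instead lay down the entire bottom segment $I_0 \subset \cdots \subset I_p = \Ann(R)$ in one go (via \Cref{lem:tf-splitting-abstractly} -- though iterating \Cref{fact:1-dim} on successive quotients of $\Ann(R)$ would do the same job and makes the count $\dim I_j = j$ more visibly automatic) and then quotient by all of $\Ann(R)$ at once. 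Your coarser step buys something real: the clause $\Ann(R) = I_k$ holds by construction with $k = p$, whereas in the paper's version one must reconcile the declaration $I_k = \Ann(R)$ with the terms obtained as preimages from $R/I_1$. You are also more explicit than the paper about why the quotient ring to which the induction hypothesis is applied still has torsion-free additive group (via \Cref{fact:tf-divisible} and \Cref{fact:divisible-split}); that check is needed in both versions and is the right thing to record.
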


\begin{proof}
By induction on $n$. If $n = 1$, by \Cref{prop:1-dim}, $R = I_1 = \Ann(R)$, as required. Suppose $n > 1$.

We first show that $\Ann(R)$ is infinite. Since every torsion-free definable group is definably connected, we only have to show that $\Ann(R)$ is non-trivial.

Suppose, by way of contradiction, $\Ann(R) = \{0\}$ and let $k$ be the smallest positive integer such that $R^{k+1} = \{0\}$. 
Let $x \in R^k$, $x \neq 0$. Since $\Ann(R) = \{0\}$, there is $y \in R$ such that $xy \neq 0$ or $yx \neq 0$. Therefore, $R^{k}y \neq \{0\}$ or $yR^k \neq \{0\}$. Either way, $R^{k+1} \neq \{0\}$, contradiction. So $\Ann(R)$ is infinite, as claimed. If $\dim \Ann(R) = k$, set $I_k = \Ann(R)$.

If $k > 1$, take $I_1$ to be a $1$-dimensional definable subgroup of $\Ann(R)$ (\Cref{fact:1-dim}) and 
$\overline{R} = R/I_1$. By induction hypothesis, there are definable ideals
\[
\{0\} = J_0 \subset J_1 \subset \dots \subset J_{n-1} = \overline{R}
\]

\medskip
such that $\dim J_k = k$ and $J_{k+1}/J_k \subseteq \Ann(\overline{R}/J_k)$ for all $k = 0, \dots, n-2$. For $i = 1, \dots, n-1$, set $I_{i+1}$ to be the pre-image of $J_i$ in $R$.
\end{proof}

\begin{cor} \label{cor:nilpotent-Rn+1=0}
If $R$ is a $n$-dimensional definably connected nilpotent ring, then $R^{n+1} = \{0\}$.
\end{cor}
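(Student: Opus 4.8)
The plan is to reduce to the torsion-free case and then read off the conclusion from the flag produced in \Cref{prop:nil-flag}. First I would dispose of any additive torsion, since a definably connected ring need not be torsion-free (the $0$-Sylow may carry torsion sitting inside the annihilator). Write $G = (R, +)$, let $A = \mtf(G)$ and let $S$ be the $0$-Sylow subgroup, so that $R = A + S$. By \Cref{prop:subgr}, $A$ is an ideal, $S \subseteq \Ann(R)$, and $R^2 \subseteq A$. Since every element of $S$ annihilates $R$ on both sides, expanding a product $(a_1 + s_1)(a_2 + s_2)$ kills every term involving an $S$-component, so $R^2 = A^2$; an easy induction using $R^{k+1} = \bigcup_{a \in R} a R^k$ then gives $R^k = A^k$ for all $k \geq 2$. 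As $A$ is a torsion-free nilpotent ring with $\dim A \leq n$, it therefore suffices to prove $A^{\dim A + 1} = \{0\}$: once this is known, $A^m = \{0\}$ for every $m \geq \dim A + 1$ (again because $A^{m+1} = \bigcup_a a A^m$), whence $R^{n+1} = A^{n+1} = \{0\}$.

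It thus remains to treat a torsion-free $d$-dimensional definable nilpotent ring and show its $(d+1)$-st power vanishes. Here I would apply \Cref{prop:nil-flag} to obtain definable ideals $\{0\} = I_0 \subset I_1 \subset \cdots \subset I_d = R$ with $\dim I_k = k$ and $I_{k+1}/I_k \subseteq \Ann(R/I_k)$ for each $k$. The key translation is that $I_{k+1}/I_k \subseteq \Ann(R/I_k)$ says precisely that $R \cdot I_{k+1} \subseteq I_k$.

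With this in hand the conclusion is a telescoping induction. I claim $R^k \subseteq I_{d+1-k}$ for $1 \leq k \leq d+1$. The base case $R^1 = I_d$ is immediate, and if $R^k \subseteq I_{d+1-k}$ then
\[
R^{k+1} = \bigcup_{a \in R} a R^k \subseteq R \cdot I_{d+1-k} \subseteq I_{d-k} = I_{d+1-(k+1)},
\]
using the containment above with index $j = d+1-k$. Taking $k = d+1$ yields $R^{d+1} \subseteq I_0 = \{0\}$, as required.

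I do not expect a serious obstacle: the flag of \Cref{prop:nil-flag} does essentially all the work. The only points requiring care are the bookkeeping in the torsion reduction — verifying $R^k = A^k$ for $k \geq 2$ and using $\dim A \leq n$ to pass from the sharp bound in dimension $\dim A$ to the bound $n+1$ — and keeping the telescoping indices straight.
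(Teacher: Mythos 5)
Your proof is correct and follows the paper's route exactly: reduce to the torsion-free case via \Cref{prop:subgr} (writing $R = A + S$ with $S \subseteq \Ann(R)$), then read off $R^{d+1} = \{0\}$ from the flag of \Cref{prop:nil-flag}. You simply spell out the telescoping induction $R^k \subseteq I_{d+1-k}$ and the bookkeeping $R^k = A^k$ that the paper leaves implicit.
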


\begin{proof}
If $(R, +)$ is torsion-free, the claim follows from \Cref{prop:nil-flag}. Otherwise, we can reduce to the torsion-free case by \Cref{prop:subgr}, because $R = A + S$, where $A$ is torsion-free and $S \subseteq \Ann(R)$.
\end{proof}

\begin{theo}\label{theo:nilpotent}
Let $\Rs = (R, +, 0, \cdot)$ be a definably connected nilpotent ring.  
 If $\Rs$ is not a null ring, there is a positive integer $s$ and \Rs-definable real closed fields $K_1, \dots, K_s$  such that 

 \begin{itemize}

\medskip
\item $K_i$ is not definably isomorphic to $K_j$ if $i \neq j$;

\medskip
\item $R$ is a direct product of rings
\[
R = R_0 \times R_1 \times \cdots \times R_s
\]

\medskip \noindent
where $R_0 \subset \Ann(R)$ (and so $R^2 \subset R_1 \times \cdots \times R_s$), and for $i=1, \dots, s$, $R_i$ is a finite dimensional associative $K_i$-algebra that is not a null ring; 

\medskip
\item if $\mtf(R, +)$ is a direct sum of $1$-dimensional definable subgroups, then 
for $i >0$, $R_i$ is a definable associative $K_i$-algebra and $s$ is the maximal dimension of a monogenic definable subgroup of $R/\Ann(R)$.

 \end{itemize}
 \end{theo}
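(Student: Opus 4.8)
The plan is to pass to the torsion-free part, manufacture real closed fields directly from the multiplication, promote each resulting factor to a vector space so that \Cref{prop:algebras} supplies the algebra structure, and finally split off the non-isomorphic fields into a direct product. For the reduction, \Cref{prop:subgr} gives $R = A + S$ with $A = \mtf(R,+)$ torsion-free (hence definably connected), $S$ the $0$-Sylow subgroup satisfying $S \se \Ann(R)$, and $R^2 \se A$. Since $S \se \Ann(R)$, the multiplication factors through $A$ and $S$ will be absorbed into the annihilating factor $R_0$; by \Cref{cor:null-tf} the ring $A$ is again non-null, and $R/\Ann(R)$ is torsion-free. So it suffices to decompose the torsion-free nilpotent ring $A$.

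The heart of the argument, and the step I expect to be the main obstacle, is producing the fields. A priori a $1$-dimensional torsion-free definable group carries no definable field structure; the field must be built from the non-trivial multiplication. Since $A$ is not null, there are $a,b$ with $ab \neq 0$; cutting $\la a\df$ and $\la b\df$ down to $1$-dimensional subgroups by \Cref{fact:1-dim} produces a non-trivial bi-additive pairing into a $1$-dimensional group. The rigidity of \Cref{lem:generator-matters} --- that $xR$ and $Rx$ depend only on $\la x\df$, not on the generator --- is what lets me transport this pairing onto a single $1$-dimensional group and recognize it as a ring with non-trivial product, hence a real closed field by \Cref{prop:1-dim}. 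I would organize this intrinsically through the centroid
\[
\Gamma = \{\phi \in \End(A) : \phi(xy) = \phi(x)\,y = x\,\phi(y)\ \text{ for all } x,y \in A\},
\]
a commutative ring (definable by DCC on definable subgroups) over which multiplication is bilinear; the fields $K_1,\dots,K_s$ are then the distinct real closed fields inside $\Gamma$ acting faithfully on the non-null part. The delicate points are promoting the non-trivial pairing to a genuine field (associativity and inverses, not merely a pairing) and showing the scalar action extends coherently and locally from the $1$-dimensional pieces to all of $A$.

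Once a real closed $K_i$ acts as scalars on a factor, that action is definable and additive, making the factor a finite-dimensional $K_i$-vector space, and \Cref{prop:algebras} upgrades it to a definable associative $K_i$-algebra; finite-dimensionality is immediate from $\dim A < \infty$, and the factors retained are non-null by construction. For the third clause, under the hypothesis that $\mtf(R,+)$ is a direct sum of $1$-dimensional definable subgroups, \Cref{lem:tf-splitting} ensures that every definable subgroup and every complement is again such a sum; this provides a definable $K_i$-basis of each factor, turning the abstract vector-space structure into a definable one, which is exactly what is needed to conclude each $R_i$ is a definable $K_i$-algebra rather than merely an algebra.

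Finally I group the $1$-dimensional pieces by the definable-isomorphism class of their scalar field, obtaining an isotypic decomposition of the additive group on which the scalars act locally: the idempotent $e_i \in \Gamma$ is the identity on the $K_i$-part and zero on the others and on $\Ann(R)$. Cross products then vanish, since for $x$ in the $K_i$-part and $y$ in the $K_j$-part with $i \neq j$ one gets $e_i(xy) = e_i(x)\,y = xy$ and simultaneously $e_i(xy) = x\,e_i(y) = 0$, hence $xy = 0$; the purely annihilating pieces (including any field-class whose isotypic part is null) are collected into $R_0 \se \Ann(R)$. This yields $R = R_0 \times R_1 \times \cdots \times R_s$ with each $R_i$ an ideal and $K_i \not\cong K_j$ for $i \neq j$. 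For the count, a monogenic subgroup $\la \bar a\df$ of $R/\Ann(R)$ is a cyclic module over $K_1 \times \cdots \times K_s$, hence the sum of at most one $1$-dimensional piece from each class, so $\dim \la \bar a\df \leq s$ with equality for an $\bar a$ having a non-zero component in every class; this matching of $s$ with the maximal monogenic dimension uses \Cref{lem:tf-splitting} once more, which is why that clause is stated conditionally.
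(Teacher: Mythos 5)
Your overall shape (reduce to the torsion-free part via \Cref{prop:subgr}, build real closed fields out of the non-trivial multiplication, use \Cref{prop:algebras} to upgrade vector spaces to algebras, then separate non-isomorphic fields) matches the paper's, but the central step is not actually carried out, and the device you propose for it does not work as stated. The centroid $\Gamma \subseteq \End(A)$ is not a definable object: $\End(A)$ is the abstract endomorphism ring, there is no definable family containing all (even all definable) additive endomorphisms of $A$, and DCC on definable subgroups says nothing about subsets of $\End(A)$. Moreover, for a nilpotent ring the centroid is far from a product of fields (any additive map sending $A$ into $\Ann(A)$ and killing $A^2$ lies in it), so ``the distinct real closed fields inside $\Gamma$'' and the idempotents $e_i \in \Gamma$ are precisely what must be constructed, not a starting point. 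The two things you yourself flag as delicate --- associativity/inverses for the transported pairing, and coherence of the scalar action across all of $A$ --- are the actual content of the theorem. The paper resolves them by inducting along the flag $I_0 \subset \cdots \subset I_n$ of \Cref{prop:nil-flag}, splitting into the cases $\Ann(R)=I_n$, $I_n^2=\{0\}$, and $I_n^2\neq\{0\}$, and in each case building explicit $\Rs$-definable bijections such as $f_a \colon R/\Ann(a) \to aR$, $x+\Ann(a)\mapsto ax$, through which multiplication is transported; associativity is then checked by showing a suitable definable subgroup of a $1$-dimensional torsion-free group contains a nonzero element, hence is everything.

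Two further gaps: the statement requires the $K_i$ to be $\Rs$-definable, not merely $\M$-definable, and this needs a separate argument (the paper's closing paragraph reduces to $3$-nilpotent algebras and locates $K$ inside $\Ann(x)$ or $\Ann(R)$); your construction via $\la a\df$, $\la b\df$ and $\Gamma$ gives at best $\M$-definability. And the second bullet must hold even when $\mtf(R,+)$ is \emph{not} a direct sum of definable $1$-dimensional subgroups, in which case the $R_i$ are only abstract $K_i$-algebras; your isotypic decomposition implicitly assumes the definable splitting, whereas the paper uses \Cref{lem:tf-splitting-abstractly} and replaces the subgroups $A_i$ by the definable quotients $I_{s+i}/I_{s+i-1}$ to handle this case.
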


\begin{proof}
 By induction on $\dim R$. If $\dim R = 1$, $R$ is a null ring and there is nothing to prove.  Set $\dim R = n+1$, $n \geq 1$. Set $G = (R, +)$. We can assume $G$ is torsion-free. 

Indeed, if $G$ is not torsion-free, let $S$ be its infinite $0$-Sylow so that 
$R = \mtf(G) + S$. After  proving our claims for the torsion-free definable ideal $\mtf(G) = R'_0 \times R'_1 \times \cdots \times R'_s$, we can set $R_0 = R'_0 + S$ and $R_i = R'_i$ for $i \geq 1$, since $S \se \Ann(R)$ and $R^2 \se \mtf(G)$ (\Cref{prop:subgr}). So let $(R, +)$ be torsion-free and fix a chain of ideals $I_j$ as in \Cref{prop:nil-flag}. Recall that every definable torsion-free group is definably connected, from which it follows that whenever $H \se K$ are definable additive subgroups, $H \neq K$ implies $\dim H < \dim K$.  

\bigskip
Assume $\dim \Ann(R) = n$. That is, \fbox{$\Ann(R) = I_n$}. We will show that in this case $s = 1$ and $R = R_0 \times R_1$, where $R_0 \se \Ann(R)$, and $R_1$ is a 2-dimensional commutative associative algebra over a $\Rs$-definable real closed field $K$.

Recall that $R/I_n$ has trivial multiplication, so $xy \in I_n$
for each $x, y \in I_n$. We claim that $R$ is commutative. 

If not, let $a \notin Z(R)$. Therefore, $C(a)$ is a proper definable subring of $R$ containing $a$ and $\dim C(a) \leq n-1$. On the other hand, $I_n = \Ann(R) \subseteq C(a)$, so $\dim C(a) = n-1$ and $C(a) = I_n$.
Because $a \in C(a)$, we have  
$a \in I_n \subseteq Z(R)$, contradiction.  

Note that $\Ann_1(R) = \Ann_2(R) = I_n$, otherwise $R^2 = 0$. Therefore, $xR$ has positive dimension for each $x \notin I_n$.

Fix $a \notin I_n$. Since $I_n \subseteq \Ann(a)$, it must be $\dim aR = 1$. 
If $x \notin I_n$, then $x \notin \Ann(a)$, so $xa \neq 0$. As for $aR$, $\dim xR = 1$. 
Because $xa = ax \in xR \cap aR$, it follows that $xR = aR$. That is, $R^2 = aR$. 

Set $\overline{R} = R/I_n$. As $I_n = \Ann(a)$, the \Rs-definable map
\[
f_a \colon \overline{R} \to aR
\] 

given by
\[
x + I_n \mapsto ax
\]

\medskip
is a well-defined group isomorphism. 

For each $x + I$, $y + I \in \overline{R}$, define
\[
(x + I) \otimes (y + I) = f_a\inv (xy).
\] 

We claim that $(\overline{R}, +, \otimes)$ is a real closed field.  


To show that $\otimes$ is associative, we need to ensure that for each $x, y, z \in R$
 \[
f_a\inv (f_a\inv (xy)z)  = f_a\inv (x f_a\inv (yz)),
\]

\medskip
which is equivalent to 
\[
f_a\inv (xy)z  = x f_a\inv (yz),
\]

because $f$ is a bijection. Fix $y, z \in R$ and set 
\[
G = \{x+ I_n \in \overline{R} : f_a\inv (xy)z  = x f_a\inv (yz) \}. 
\]

\medskip
Note that $G$ is well-defined because $xy$ only depends on the classes $x+I_n$ and $y+I_n$. It is easy to check that $G$
is a definable subgroup of $(\overline{R}, +)$. Whenever $y$ and $z$ do not belong to $I_n$, by setting $x = a$, we get
\[
f_a\inv (ay)z  = yz = a f_a\inv (yz).
\]

\smallskip
That is, $a+ I \in G$. Since $(\overline{R}, +)$ is $1$-dimensional torsion-free, it has no proper non-trivial definable subgroup, so $G = \overline{R}$, and $\otimes$
is associative, as claimed. Distribution of multiplication with respect to addition follows from the fact that $f_a\inv$ is an additive homomorphism.

Note that for each $x \in R$,
\[
(a + I) \otimes (x + I) = (x + I) \otimes (a + I) = f_a\inv (ax) = (x + I).
\]

\medskip
Thus $(a + I)$ is a multiplicative identity. For each $(x + I) \in \overline{R}$, 
\[
(x + I)^{-1} = f_x^{-1}(a^2). 
\]

Therefore $K = (\overline{R}, +, \otimes)$ is a field. As $K$ is $1$-dimensional, it is real closed by \cite{Pi88}, as claimed. Also note that $K$ is \Rs-definable as $\ol R = R/\Ann(R)$ and $f_a$ are \Rs-definable. Now for $x, y \in aR$ we can set
\[
x \otimes y = f_a(f_a\inv (x)  \otimes f_a\inv (y))  
\] 
 
\medskip \noindent
obtaining a definably isomorphic real closed field $(aR, +,  \otimes)$ with multiplicative identity $a^2$. 
 
Let $B$ be a subgroup of $(R, +)$ from \Cref{fact:tf-divisible} and \Cref{fact:divisible-split} such that
\[
R = I_n \oplus B.
\]
$(B, +)$ is isomorphic to $(\overline{R}, +)$ through the canonical homomorphism $R \to R/I_n$, so $B$ too admits a multiplication making it a field isomorphic to $K$.
 
 Set $R_1 = aR \oplus B$. Clearly, $R_1$ is a $2$-dimensional $K$-vector space.  To conclude that $R_1$ is an associative algebra, we need
\[
(ru)(sv) = (rs)(uv)
\]

\medskip
for any $r, s \in K$ and $u, v \in R_1$. Write $u = x_1 + y_1$, $v = x_2 + y_2$ with $x_i \in aR$ and $y_i \in B$. Since $aR \subseteq \Ann(R)$, we have
\[
(ru)(sv) = (rx_1 + ry_1)(sx_2 + sy_2) = ry_1sy_2
\]
and
\[
(rs)(uv) = (rs)y_1y_2.
\]

\medskip
As $B$ is a $K$-vector space, equality follows. 

If $(R, +)$ is a direct sum of $1$-dimensional definable subgroups, $B$ can be taken to be definable by \Cref{lem:tf-splitting}, so $R_1 = aR \oplus B$ is a definable $K$-vector space and by \Cref{prop:algebras}, $R_1$ is a definable associative $K$-algebra. 

Finally, as $aR$ is a divisible subgroup of the abelian group $I_n$, there is a (possibly not definable) subgroup $H$ such that $I_n = aR \oplus H$. Since $H \subset \Ann(R)$, clearly $H$ is an ideal, and
\[
R = H \times (aR \oplus B).
\]

\medskip
 Therefore, $R_0 = H$ and $R_1 = aR \oplus B$ satisfy our claims. 

  \bigskip
 Assume now \fbox{$\Ann(R) \neq I_n\ \&\ I_n^2 = \{0\}$}. That is, $I_n$ has trivial multiplication. We will show that also in this case $s = 1$ and $R = R_0 \times R_1$,
where $R_0 \subset \Ann(R)$, $R^2 \subset R_1$ and $R_1$ is an associative algebra over a $\Rs$-definable real closed field $K$. The difference with the previous case is that $R_1$ can have arbitrary dimension (see \Cref{ex:In-trivial}).

Suppose $k = \dim I_n - \dim \Ann(R)$. By \Cref{lem:tf-splitting-abstractly}, there are additive subgroups 
$A_1, \dots, A_k, B$, each isomorphic to a definable $1$-dimensional group, such that
\begin{equation}
I_n = \Ann(R) \oplus A_1 \cdots \oplus A_k \quad \mbox{and} \quad R= I_n \oplus B.
\end{equation}

\medskip \noindent
Because $I_n$ has trivial multiplication, for each nonzero $x \in \oplus_{i = 1}^k A_i$, $\dim \Ann(x) = n$, since $x \notin \Ann(R)$, and at least one of $xR$ or $Rx$ is infinite, hence $1$-dimensional. 
In particular, for each $y \notin I_n$, $xy \neq 0$ or $yx \neq 0$, because $R = I_n + \la y \df$, and if $xy = yx = 0$, then $\la y \df \se \Ann(x)$ and $x \in \Ann(R)$, contradiction.

Let  $A = A_1$, fix a nonzero $a \in A$ and, without loss of generality, assume $\dim aR = 1$.   

Set $\overline{R} = R/I_n$. Since $\Ann(a) = I_n$, as for the case $\Ann(R) = I_n$ above, the definable map
\[
f_a \colon \overline{R} \to aR
\] 

given by
\[
x + I_n \mapsto ax
\]

\medskip
is a well-defined group isomorphism. However, unlike the case $\Ann(R) = I_n$, $R$ is not necessarily commutative and $aR$ may not coincide with $R^2$. In particular, for elements  $x+I_n, y+I_n \in \overline{R}$, the product $xy$ may not be contained in $aR$, the multiplication on $\overline{R}$ from the previous case is not well-defined, and we need a different strategy to define a product on $\ol R$ and $aR$.

Assume first $(R, +)$ is a direct sum of $1$-dimensional definable subgroups. Thus each $A_i$ and $B$ can be taken to be definable by \Cref{lem:tf-splitting}.  

Fix  a nonzero $b \in B$. As noted above, $ab \neq 0$, otherwise $B \subset \Ann(a)$ and $aR = \{0\}$, contradiction.
Moreover, $\alpha b \neq 0$ for each $\alpha \in A$, otherwise $A = \la \alpha \df \subset \Ann_1(b)$, in contradiction with $ab \neq 0$. Therefore, $aB = Ab = aR$. Set $C = aR$. There are definable group isomorphisms $f \colon A \to B$ and $g \colon C \to A$ given by
\[
f(x) = u\ \Leftrightarrow\ xb = au \qquad \mbox{ and } \qquad g(x) = v \Leftrightarrow\ x = vb. 
\]
 
For $x, y \in A$, define
\[
x \otimes y = g(xf(y)).
\]

\medskip
This operation is well-defined because $\alpha \beta \in C$ for each $\alpha \in A$ and $\beta \in B$. Indeed, for $\alpha \neq 0$, $\alpha b \neq 0$, and $\alpha b \in \alpha R \cap aR$, so $\alpha R = aR = \alpha B$.  

\medskip
To show that $\otimes$ is associative, we need to ensure that for each $x, y, z \in A$
 \[
g(g(xf(y))f(z) = g(xf(g(y f(z)))), 
\]

\medskip
which is equivalent to 
\[
 g(xf(y))f(z) = xf(g(y f(z))),
\]

\medskip
because $g$ is a bijection. Fix $y, z \in A$ and set 
\[
G = \{x \in A : g(xf(y))f(z) = xf(g(y f(z))) \}. 
\]

\medskip
 Since $g$ is a group homomorphism, $G$ is a definable subgroup of $(A, +)$. 
 To see that $a \in G$, note that for each $x \in A$, $af(x) = xb$ and $g(xb) = x$. Moreover, for each $c \in C$, 
 $g(c)b = c$. Thus
 \[
 g(af(y))f(z) = g(yb)f(z) = yf(z)            
 \]
and
\[
af(g(y f(z))) = g(y f(z)) b = yf(z). 
\]
 
 \medskip
Since $(A, +)$ has no proper non-trivial definable subgroup, $G = A$, and $(A, +, \otimes)$ is a ring. For each $x \in A$
\[
a \otimes x = g(af(x)) = g(xb) = x \quad \mbox{ and } \quad x \otimes a = g(xf(a)) = g(xb) = x. 
\]

\medskip
That is, $a$ is an identity for $\otimes$ and $(A, +, \otimes) = K$ is a real closed field by \Cref{prop:1-dim}. Using the definable isomorphisms $f \colon A \to B$ and $g \colon C \to A$ one can define corresponding multiplications on $B$ and $C$ obtaining two real closed fields definably isomorphic to $K$. 

The same process can be repeated by replacing $A$ with $A_i$ for $i = 2, \dots, k$ to show that each admits a multiplication making it a real closed field definably isomorphic to $K$. For each $i = 1, \dots, k$,
fix $a_i \in A_i$, $a_i \neq 0$. For ease of notation, set $B = A_{k+1}$ and $b = a_{k+1}$.

Since $A_i$ is $1$-dimensional torsion-free, $a_i$ is a definable generator for each $i$. It follows from the decompositions (1) that
 \[
\sum_{x \in R} xR + \sum_{x \in R} Rx = \sum_{i = 1}^{k+1} a_iR + \sum_{i = 1}^{k+1} Ra_i.
\]

Define
\[
J = \left (\sum_{i = 1}^{k+1} a_iR + \sum_{i = 1}^{k+1} Ra_i \right) \cap \Ann(R).
\]

\medskip
Because $J$ is a definable subgroup of $(R, +)$ contained in $\Ann(R)$, $J$ is an ideal. If $J$ is properly contained in $\Ann(R)$, set $R_1 = J \oplus \sum_{i = 1}^{k+1}A_i$ and $R_0$ a complement of $J$ in $\Ann(R)$ from \Cref{fact:divisible-split} , so that $R = R_0 \oplus R_1 = R_0 \times R_1$. By induction hypothesis, $R_1$ is a definable associative $K$-algebra, as required. Note that $R^2 \se R_1$ by construction.

If $J = \Ann(R)$, we claim that $R$ is a definable associative $K$-algebra. For $i = 1, \dots, k$ we already know each of $a_iR$ or $Ra_i$
is either trivial or a definable $1$-dimensional $K$-vector space. For $b \in B$, $bR$ and $Rb$ may have dimension greater than
$1$. Indeed, they may have arbitrarily large dimension. However, 
\[
bR = bA_1 \oplus \cdots \oplus bA_{k+1}
\]

\medskip
where we know each $A_i$ is a $1$-dimensional definable $K$-vector space so, whenever non-trivial, $bA_i$ is too. Similarly for $Rb$. Therefore, $R$ is a definable $K$-vector space. By \Cref{prop:algebras}, it is a definable associative $K$-algebra. 

Note that for each $x \in R/\Ann(R) = R_1/\Ann(R_1)$, $\la x \df = Kx$, so $s = 1$ is the maximal dimension of a monogenic definable subgroup of $R/\Ann(R)$, as claimed.

Consider now the case where $(R, +)$ is not necessarily a direct sum of definable $1$-dimensional subgroups. Let $\Ann(R) = I_s$. Replace $A_1$ with the definable $I_{s+1}/I_s$ and $B$ with the definable $R/I_n$. As already noted, for each   
$b \notin I_n$ and $a \in I_{s+1} \backslash I_s$, at least one of $ab$ or $ba$ is different from $0$, because $I_n = \Ann(a)$. Assume $ab \neq 0$. There are still corresponding definable group isomorphisms $f \colon I_{s+1}/I_s \to R/I_n$
and $g \colon aR \to I_{s+1}/I_s$ allowing to define multiplications making each a definable real closed field $K$. More generally, for $i =1, \dots, k$, $A_i$ can be replaced by the definable $I_{s+i}/I_{s+i-1}$ and $a_i$ can be taken to be any element $I_{s+i}$
that is not in $I_{s+i-1}$.

We still have
\[
\sum_{x \in R} xR + \sum_{x \in R} Rx = \sum_{i = 1}^{k+1} a_iR + \sum_{i = 1}^{k+1} Ra_i,
\]

so that
\[
J = \left (\sum_{i = 1}^{k+1} a_iR + \sum_{i = 1}^{k+1} Ra_i \right) \cap \Ann(R)
\]

\medskip
is a definable ideal and $R_1 = J \oplus \sum_{i = 1}^{k+1}A_i$ is a (possibly not definable) associative $K$-algebra such that $R^2 \subset R_1$.  
If $J$ is properly contained in $\Ann(R)$, let $R_0$ be a complement of $J$ in $\Ann(R)$ from \Cref{fact:divisible-split} , so that $R = R_0 \oplus R_1 = R_0 \times R_1$. 

Note that $K$ is \Rs-definable because, as noted above, $I_n = \Ann(a)$ for each $a \in I_n$, $a \notin \Ann(R)$, so $R/I_n \cong aR$ is $\Rs$-definable.

   \bigskip
 Assume now \fbox{$I_n^2 \neq \{0\}$}. That is, $I_n$ is not a null ring. Suppose first $(R, +)$ splits into a direct sum of definable $1$-dimensional subgroups, and recall we are working under the assumption that $(R, +)$ is torsion-free.
 
 Let $A$ be a 
$1$-dimensional definable subgroup such that $R = I_n \oplus A$ from \Cref{lem:tf-splitting}.
 
We will first prove by induction on $n+1 = \dim R$ that if $R/I_1$ is a null ring, then $s = 1$, as it was shown above for the case $I_n^2 = \{0\}$, and 
\[
R = R_0 \times R_1,
\] 

where $R_0 \se \Ann(R)$, $R$ is a definable associative $K$-algebra for some definable real closed field $K$, and $R^2 \subset R_1$.  

If $n = 1$, then $I_1 = I_n = \Ann(R)$ and our claim was proved above. Suppose $n > 1$.

Since $R/I_1$ is a null ring, $I_n/I_1$ is a null ring too. By induction hypothesis, $I_n = S_0 \times S_1$, where $S_1$ is a definable associative $K$-algebra, for some definable real closed field $K$, and $I_n^2 = I_1 \subset S_1$. In particular, $I_1$ is a $1$-dimensional definable $K$-vector space and can be equipped with a definable multiplication $\otimes$ such that $(I_1, +, \otimes)$ is a definable real closed field definably isomorphic to $K$.

Let $a \in A$ be any nonzero element. Then $a \notin \Ann(R) \subset I_n$ and there is some  $b \in R$ such that $ab$ or $ba$ is nonzero. Without loss of generality, assume $ab \neq 0$. 

Since $A$ is $1$-dimensional torsion-free, $A = \la \alpha \df$ for each nonzero $\alpha \in A$, and $A \cap \Ann_1(b) = \{0\}$. It follows that  $Ab = I_1$ and the map $f \colon A \to I_1$ given by $\alpha \mapsto \alpha b$ is a definable group isomorphism. Using the definable multiplication $\otimes$ on $I_1$ we can then define a multiplication $\otimes$ on $A$ by
\[
a_1 \otimes a_2 = f\inv (f(a_1) \otimes f(a_2)),
\]

\noindent
for any $a_1, a_2 \in A$, making it a real closed field definably isomorphic to $K$.

 By \Cref{lem:tf-splitting}, we can decompose $S_0$ into a direct sum of definable $1$-dimensional subgroups
  \[
  S_0 = A_1 \oplus \cdots \oplus A_k \oplus B_1 \oplus \cdots \oplus B_p
  \]
  
  \medskip 
 such that $B_i \se \Ann(A)$ and $A_i \cap \Ann(A) = \{0\}$. By replacing $b$ with $a$, we can show as done above for $A$ that each $A_i$ is a $1$-dimensional definable $K$-vector space, so $S_1 \oplus A_1 \oplus \cdots \oplus A_k \oplus A$ is a definable $K$-vector space, and an associative $K$-algebra by \Cref{prop:algebras}.
Therefore $R_0 = B_1 \oplus \cdots \oplus B_p$ and $R_1 = S_1 \oplus A_1 \oplus \cdots \oplus A_k \oplus A$ are as required. Clearly, $Kx = \la x \df$ for each $x \in R/\Ann(R)$. This complete the proof that if $R/I_1$ is a null ring, then $R = R_0 \times R_1$ as specifed above.

We now need to prove our theorem when $R/I_1$ is not a null ring. Let $k$ be the smallest index such that $R/I_k$ is a null ring  and set $\ol R = R/I_{k-1}$. 
Then $\ol R$ is not a null ring and we can define $\ol I_1 = I_k/I_{k-1}$ which is a $1$-dimensional definable ideal in $\Ann(\ol R)$ such that  $\ol R /\ol I_1$ is a null ring. 

As proved above, there is a definable real closed field $K$ such that $\ol R = \ol R_0 \times \ol R_1$, where $\ol R_1$
is a definable associative $K$-algebra. By identifying $A$ with $\ol R/\ol I_{n-k+1} = R/I_n$
we can conclude that $A$ is a $1$-dimensional definable $K$-vector space.

By \Cref{lem:tf-splitting} and induction hypothesis, there is a positive integer $r$ and definable real closed fields $K_1, \dots, K_{r}$ such that
\[
I_n = S_0 \times S_1 \times \cdots \times S_{r},
\]

\medskip
where for each $i \geq 1$, $S_j$ is a definable $K_j$-vector space and associative 
$K_j$-algebra, 
$S_0 \subset \Ann(I_n)$, and $I_n^2 \subset S_1 \times \cdots \times S_{r}$.  

As done above, we can decompose $S_0$ into a direct sum of definable $1$-dimensional subgroups
  \[
  S_0 = A_1 \oplus \cdots \oplus A_k \oplus B_1 \oplus \cdots \oplus B_p
  \]
  
 such that $B_i \se \Ann(A)$, $A_i \cap \Ann(A) = \{0\}$. We claim that each $A_i$ is a $1$-dimensional definable $K$-vector space.
 
 Without loss of generality, assume $A_i \cap \Ann_1(A) = \{0\}$, fix a nonzero $a \in A$, and set $C_i = A_i a$. Let $x \in A_i$ and $c \in C_i$ such that $xa = c \neq 0$. 
 Then $xA = C_i$ and the map $f \colon A \to C_i$ given by $f(\alpha) = x \alpha$ is a definable group isomorphism. As done above, using $f$ and the definable product on $A$, $(C_i, +)$ can be equipped with a definable product making it a real closed field definably isomorphic to $K$ and the same can be done for $(A_i, +)$, as claimed.
 
 Suppose $B_i \cap aR$ or $B_i \cap Ra$ is non-trivial. Without loss of generality, assume 
$B_i \cap aR \neq \{0\}$. Let $b \in B_i$ and $r \in R$ such that $ar = b \neq 0$. Then $B_i = Ar$
and $B_i$ is a $1$-dimensional definable $K$-vector space.

After reordering the $B_i$'s, let $B_1, \dots, B_q$ such that $B_i \cap aR = B_i \cap Ra = \{0\}$, and rename $A_{k+1}, \dots, A_p$ the $B_i$'s with non-trivial intersection with $aR$ or $Ra$.

 If $K$ is definably isomorphic to $K_m$, for some 
 $m \in \{1, \dots, r\}$, then $s = r$ and $R_0 = B_1 \oplus \cdots \oplus B_p$, $R_i = S_i$ for $i \neq m$
 and $R_i = S_i \oplus A_1 \oplus \cdots \oplus A_p \oplus A$ for $i = m$.

 If $K$ is not definably isomorphic to any $K_m$, then $s = r+1$, $R_0 = B_1 \oplus \cdots \oplus B_p$, $R_i = S_i$ for each 
 $i \in \{1, \dots, r\}$, $K_s = K$ and $R_s = A_1 \oplus \cdots \oplus A_k \oplus A$.

We need to check that $s$ is the maximal dimension of a monogenic definable subgroup of 
\[
R/\Ann(R) = R_1/\Ann(R_1) \times \cdots \times R_s/\Ann(R_s),
\]

\medskip
which follows from the fact that after writing $x \in R/\Ann(R)$, $x = (x_1, \dots, x_s)$, we have
\[
\la x \df = K_1x_1 \times \cdots \times K_sx_s.
\]

\medskip
Even when $(R, +)$ is not a direct sum of definable $1$-dimensional subgroups, by induction hypothesis $I_n = S_0 \times S'$, where $S'$ is a direct product of associative $K_i$-algebras
and we can replace $A$ with the definable $R/I_n$, $\Ann(A)$ with the definable $\Ann(a)$ for any $a \notin I_n$, to find corresponding associative algebras $R_1, \dots, R_s$. 

Finally, let us understand why $K_1, \dots, K_s$ are $\Rs$-definable. Note that 
\[
\Ann(R) = R_0 \times \Ann(R_1) \times \cdots \times \Ann(R_s)
\]
 and $\Ann(R_i) \neq R_i$ because $R_i$ is not a null ring. Therefore, if $s > 1$, for each $x \in R_1$, 
$x \notin \Ann(R_1)$, $\Ann(x)$ is a proper $\Rs$-definable subring containing $R_2, \dots, R_s$, and $K_2, \dots, K_s$ are $\Rs$-definable by induction hypothesis. Similarly for $K_1$ by taking $x \in R_2$, $x \notin \Ann(R_2)$.
So we can assume $s = 1$ and $R = R_0 \times R_1$, where $R_1$ is a nilpotent associative $K$-algebra.

If $\ol R = R/\Ann(R)$ is not a null ring, then $K$ is $\ol R$-definable by induction hypothesis, and we are done. If $R/\Ann(R)$ is a null ring, then $R^2 \se \Ann(R)$ and $R^3 = \{0\}$. Therefore, $R_1$ is a $3$-nilpotent associative $K$-algebra. We can consider $R_1$ as a subalgebra of some $T_m(K)$, the $K$-algebra of strictly upper triangular $m \times m$ matrices over $K$, and since $I_n^2 \neq \{0\}$, $\dim R_1 \geq 3$. If $\dim R_1 = 3$, then $\dim \Ann(R) = 1$.
In this case, $K$ can be defined over $\Ann(R)$ because $R^2 = \Ann(R)$, so $\Ann(R) = aR$ for each $a \notin \Ann_1(R)$. If $\dim R_1 > 3$, because every matrix in $R_1$ is $3$-nilpotent, it must be made of nilpotent blocks at most $4 \times 4$ and there is some $x \in R_1$ such that $I = \Ann(x)$ has non-trivial multiplication. In this case, $K$ is $I$-definable by induction hypothesis, and therefore $\Rs$-definable, as required.
\end{proof}
 

\begin{ex}
Let $(K, +, \cdot)$ be a real closed field definable in the o-minimal structure \M. Suppose 
$
f \colon K \times K \to K
$
is a definable $2$-cocycle with the respect to $+$. Define on $K^2$ the following operations:
\[
(x, a) \oplus (y, b) = (x + y, a+b+f(x, y))
\] 

\[
(x, a) \otimes (y, b) = (0, xy).
\]

\medskip
Then $R = (K^2, \oplus, \otimes)$ is a definable ring. Since $(K, +)$ is divisible, we know that $f$ is a coboundary, the definable group $(K^2, \oplus)$ is isomorphic to $(K, +)^2$
and $R$ is isomorphic to the nilpotent associative $K$-algebra
\[
\left \{ 
\begin{pmatrix}
0 & 0 & 0 \\
x & 0 & 0 \\
a & x & 0
\end{pmatrix} : x, a \in K
\right \}.
\]

However, we do not know whether there is a definable $2$-cocycle that is not \emph{definably} coboundary, meaning that none of the complements of the $1$-dimensional definable subgroup $\{0\} \times K$ in $(K^2, \oplus)$ is definable.  
 
If there is such ``bad'' cocycle, the corresponding ring $R$ is a definable ring and it is an associative $K$-algebra, but the scalar multiplication $K \times R \to R$ is not definable, which is why in 
 \Cref{theo:nilpotent} we have to distinguish the cases where $\mtf(R, +)$ is a direct sum of $1$-dimensional definable subgroups or not.

\end{ex}

In the proof of the second case of \Cref{theo:nilpotent}  we claimed there are nilpotent definable rings of arbitrary dimension $n+1$ such that $I_ n$ has trivial multiplication and 
the co-dimension of $\Ann(R)$ in $I_n$ is arbitrary too. A general example is given below.

\begin{ex} \label{ex:In-trivial}
 Suppose $K$ is a definable real closed field in $\M$ and let $R$ be the nilpotent associative algebra over $K$ of dimension $2n + 1$ given by the $3n \times 3n$ matrices of the form

\[
 \begin{pmatrix}
    \begin{matrix}
        0 & 0 & 0 \\
        x & 0 & 0 \\
        a_1 & b_1 & 0 \end{matrix} & 0 & \dots  & 0 \\
& & \ddots  & \\
    0  & \dots & 0 & \begin{matrix}
        0 & 0 & 0 \\
        x & 0 & 0 \\
        a_n & b_n & 0 
    \end{matrix} 
    \end{pmatrix}   
\] 

\medskip
with $x, a_i, b_i\in K$.  We can take $I_{2n}$ to be the ideal of matrices with $x = 0$, which has trivial multiplication. Note that $\Ann(R)$ is the set of matrices where $x = b_i = 0$, so 
$\dim I_{2n} - \dim \Ann(R) = n$.    
\end{ex}

\begin{rem}
The ideal $R_0 \se \Ann(R)$ from \Cref{theo:nilpotent} may not be definable even when $\mtf(R, +)$ is a direct sum of $1$-dimensional definable subgroups. An example is given below. 
 \end{rem}

\begin{ex}
Let $\M$ be the real field. Set $R = \R^2 \times [1, e[$ with ring operations
\[
(a, x, u) \oplus (b, y, v) = 
\begin{cases}
(a+b, x+y, uv) & \mbox{if } uv < e \\
(a+b, x+y+1, uv/e) & \mbox{otherwise}.
\end{cases}
\] 

\[
(a, x, u) \otimes (b, y, v) = (0, ab, 0).
\]

Then $R_1 = \R^2 \times \{1\}$ is a definable associative algebra, its additive 
complements are contained in $\Ann(R)$ and none of them is definable in \M.  
\end{ex}
\medskip
\section{Reduced rings}

At the opposite end of the spectrum of nilpotent rings are the rings in which $0$ is the only nilpotent element. Such rings are usually called \textbf{reduced}. That is, in a reduced ring, $x^2 = 0$ if and only if $x = 0$.
  
 It is well-known that a finite ring is reduced if and only if it is a direct product of division rings (which are in addition commutative by Wedderburn's little theorem). We generalise this result to the o-minimal setting by showing that every reduced definable ring is a direct product of definable division rings. Moreover, these division rings are definable already in the ring structure.

 \begin{theo}\label{theo:reduced}
Every definable reduced ring \Rs\ is a direct product of \Rs-definable division rings. More precisely, if \rng\ is a definable reduced ring, then \Rs\ is unital and, given $s = \dim R(1)$, there are 
definable real closed fields $K_1, \dots, K_s$ such that:

\begin{itemize}

\medskip
\item $K_i$ is not definably isomorphic to $K_j$ if $i \neq j$;

\medskip
\item $R^0$ is a direct product of rings $R^0 = R_1 \times \cdots \times R_s$, where for $i=1, \dots, s$ $R_i$ is \Rs-definable and definably isomorphic to
\[
K_i^{r_i} \times K_i(\sqrt{-1})^{p_i} \times \mathbb{H}(K_i)^{q_i}.
\] 

\medskip
for some $r_i, p_i, q_i \in \N$, where $\mathbb{H}(K)$ denotes the quaternion algebra over $K$.

\medskip
\item $R = F \times R^0$, where $F$ is a direct product of fields.

\end{itemize}
\end{theo}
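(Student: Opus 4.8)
The plan is to dispose of the disconnected part with the torsion ideal, reduce to definably connected reduced rings, decompose those into division rings by an induction that uses reducedness to produce central idempotents, and finally classify each division ring by its centre.

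First I would record the cheap consequences of reducedness. If $a\in\Ann(R)$ then $a^2=0$, so $\Ann(R)=\{0\}$; hence by \Cref{prop:subgr} the $0$-Sylow of $(R^0,+)$ (which lies in $\Ann(R)$) is trivial and $(R^0,+)$ is torsion-free, so definably connected. Moreover every idempotent $e$ is central: writing $u=e(x-xe)$ and $v=(x-ex)e$ one checks $(x-xe)e=0$ and $e(x-ex)=0$, so $u^2=v^2=0$ and reducedness forces $ex=exe=xe$. For a central idempotent this yields the Peirce splitting $R=eR\times\Ann(e)$ into two-sided ideals with vanishing cross products, both reduced and, being the image and kernel of the $\Rs$-definable additive map $x\mapsto ex$, both definably connected.

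Next I would settle the commutative connected case as a lemma: such a ring is a finite direct product of $\Rs$-definable fields. For $a\ne0$, reducedness gives $aR\cap\Ann(a)=\{0\}$ (if $b=ac$ with $ab=0$ then $b^2=a^2c^2=0$), so by rank--nullity $R=aR\times\Ann(a)$ as rings; if some $a$ has $\{0\}\neq\Ann(a)\neq R$ both factors have smaller dimension and one inducts, and otherwise every nonzero $a$ has $\Ann(a)=\{0\}$, hence is invertible, so $R$ is a definable field, real closed or algebraically closed by Pillay. For the general connected case I induct on $\dim R$: given $a\ne0$, the subring $R(a)$ is commutative, reduced and torsion-free, hence definably connected, so by the lemma it is unital with identity a central idempotent $e_a$. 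If some $a$ has $\Ann(e_a)\neq\{0\}$, the Peirce splitting gives two smaller reduced definably connected ideals and I induct; if $\Ann(e_a)=\{0\}$ for every $a$, then each $e_a$ is a two-sided identity, all coincide with one global $1$, and every nonzero $a$ lies invertibly in the field $R(a)\ni1$, so $R^0$ is a division ring. Thus $R^0$ is a finite direct product of $\Rs$-definable, unital, definably connected division rings.

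To identify the factors, note a definably connected division ring $D$ has $\Rs$-definable centre $Z(D)$, an infinite definable field, so real closed or algebraically closed by Pillay, and $D$ is finite-dimensional as a definable $Z(D)$-vector space. Over $Z(D)=K(\sqrt{-1})$ the only finite-dimensional division algebra is $Z(D)$ itself, while over a real closed $Z(D)=K$ the Brauer group is $\Z/2\Z$, so $D$ is $K$ or $\mathbb H(K)$. Grouping factors by the definable-isomorphism type of the underlying real closed field produces $R_i=K_i^{r_i}\times K_i(\sqrt{-1})^{p_i}\times\mathbb H(K_i)^{q_i}$; because scalar multiplication by the central $K_i$ is just ring multiplication, $\la 1\df$ collapses all factors sharing a given $K_i$ into one $1$-dimensional diagonal while separating distinct $K_i$, giving $s=\dim R(1)$. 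Finally the torsion subgroup of $(R,+)$ is a finite ideal $F\subseteq\Ann(R^0)$ with torsion-free complement $R^0$ and $F\cdot R^0=R^0\cdot F=\{0\}$, so $R=F\times R^0$; being finite and reduced, $F$ is a product of fields by Wedderburn, and $R$ (hence $\Rs$) is unital.

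The step I expect to be hardest is the connected structural core --- manufacturing the division-ring decomposition from reducedness and o-minimal dimension alone, since the Jacobson-radical and semiprimeness tools that would streamline it are developed only in later sections --- together with keeping every ideal produced $\Rs$-definable rather than merely \M-definable. The classification of the individual factors then leans on external inputs (Pillay's description of definable fields and the Brauer group of a real closed field), and the equality $s=\dim R(1)$ requires careful bookkeeping of how $\la 1\df$ amalgamates definably isomorphic factors.
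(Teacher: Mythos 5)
Your proposal is correct in outline, but the structural core is genuinely different from the paper's. Where you manufacture \emph{central} idempotents (via the classical fact that idempotents in a reduced ring are central, applied to the identities of the commutative monogenic subrings $R(a)$, which you first decompose into fields) and then split off two-sided Peirce factors $R = eR \times \Ann(e)$, the paper never invokes centrality of idempotents: it picks $a,b$ with $ab=0$ from \Cref{lem:zero-unit}, sets $A=\Ann_1(b)$ and $B=\Ann_2(A)$, applies induction to each, and then does hands-on work to show $e_1+e_2$ is a non-zero-divisor (hence $=1$) and that $A\oplus B$ exhausts $R$. Your route is cleaner and more modular -- the commutative lemma plus Peirce decomposition replaces the paper's ad hoc verification that $I=R$ -- at the cost of an extra reduction through $R(a)$; the paper's route avoids the commutative intermediate step entirely. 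Two spots in your sketch are thinner than they should be. First, in the branch where every $e_a$ has trivial annihilator you assert each $R(a)$ is a \emph{field}; a priori the commutative lemma only gives a product of fields, and you need the one-line observation that a second factor would produce an idempotent with nontrivial annihilator, contradicting the branch hypothesis. Second, the equality $s=\dim R(1)$, which you describe as ``collapsing along diagonals,'' is exactly where the paper spends real effort: it inducts on the number of division-ring factors and shows that a definable subring of $K_1\times\cdots\times K_n$ containing $1$ and surjecting onto each $K_i$ either has dimension equal to the number of isomorphism classes, or forces a definable field isomorphism between two factors. You flag this, but as written it is an assertion rather than a proof. Also a quibble: the kernel $\Ann(e)$ of $x\mapsto ex$ is definably connected because it is a definable subgroup of a torsion-free group, not because kernels of definable maps on connected groups are connected.
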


\begin{proof}
Suppose \rng\ is a definable reduced ring.
First note that every torsion element of $(R^0, +)$ is nilpotent by \Cref{prop:subgr}, so $(R^0, +)$ must be torsion-free. If $R$ is not definably connected, $R = F \oplus R^0$ by \Cref{fact:divisible-split}, because $R^0$ is divisible (\Cref{fact:tf-divisible}), where $F$ is a finite reduced ring, hence
a direct product of finite fields. 
By \Cref{prop:subgr}, $F \se \Ann(R^0)$. It follows that $R = F \times R^0$ and
\[
R^0 = \bigcap_{x \in F} \Ann(x).
\]    

This shows that $R^0$ is \Rs-definable. Therefore, we can assume $R$ is definably connected.

We prove that $R$ is a direct product of $\Rs$-definable division rings by induction on $n = \dim R$. If $n = 1$, $\Rs$ is a real closed field (\Cref{prop:1-dim}). Assume $n > 1$.

If $R$ is a division ring, there is nothing to prove. Otherwise, by \Cref{lem:zero-unit} there are distinct non-zero $a, b \in R$, such that $ab = 0$.

Set $A = \Ann_1(b)$. Then $a \in A$ and $b \notin A$, because $R$ is nilpotent-free. So $1 \leq \dim A < n$ and, by induction hypothesis, $A$ is a direct product of definably connected $\mathcal{A}$-definable division rings. Let $e_1 \in A$ be its multiplicative identity. 

Set $B = \Ann_2(e_1) = \Ann_2(A)$. Then $b \in B$ and $e_1 \notin B$, so $1 \leq \dim B < n$. By induction hypothesis, $B$ is a direct product of definably connected $\mathcal{B}$-definable division rings. Let $e_2 \in B$ be its multiplicative identity. 

If $x \in A \cap B$, then $x^2 = 0$, so $A \cap B = \{0\}$. Set $I = A + B = A \oplus B$. It is easy to check that $I$ is closed under multiplication, and therefore a subring, because $A$ is a left ideal and $B$ is a right ideal.

We claim that $I = A \times B$. That is, we need to check that $e_1$ and $e_2$ are orthogonal idempotents. We already know that $e_1e_2 = 0$ by construction. Since $A$ is a left ideal, $e_2e_1 \in A$. On the other hand, $B$ is a right ideal, so $e_2e_1 \in B$. Hence $e_2e_1 \in A \cap B = \{0\}$, and $I$ is the product of $A$ and $B$.


Let $e = e_1 + e_2$ be the multiplicative identity of $I$. Since $e$ is an idempotent element, if $e \neq 1$, then $e$ is a zero-divisor.

Suppose $re = 0$. Then $re_1 = -re_2$. Because $A$ is a left ideal, $re_1 \in A$. In particular, $re_1e_1 = re_1$. On the other hand, $e_2e_1 = 0$, so $-re_2e_1 = 0 = re_1$. Hence, $re = 0$ implies $re_2 = 0$ and $r \in \Ann_1(B) = A$. However, $re_1 = 0$ and $r \in A$ implies $r = 0$. Similarly, $er = 0$ implies $r = 0$ because $B$ is a right ideal. That is, $e$ is not a zero-divisor, and $e = 1$. 

Finally, we claim that $I = R$. Assume, by way of contradiction, $r \notin I$. We know that $r = 1r = (e_1 + e_2)r = e_1r + e_2r$
and $e_2r \in B \subset I$, so $e_1r \not\in I$. On the other hand, $re_1 \in A \subset I$, so it must be $e_1r \neq re_1$.
 
It follows that $e_1R$ is a subring properly containing $A$. Moreover, $e_1$ is a left zero-divisor, so $\dim e_1R < n$. By induction hypothesis, $e_1R$ is a direct product of division rings.  Because $e_1$ is the multiplicative identity of one of those division rings, $e_1 \in Z(e_1R)$. In particular, 
\[
e_1 (e_1r) = e_1r = (e_1 r)e_1 
\]

\medskip
so $e_1(r - re_1) = 0$ and $r - re_1 \in \Ann_2(e_1) = B \subset I$. As $re_1 \in A \subset I$, $r \in I$ too, contradiction. Therefore, $R = A \times B$. Note that both $A$ and $B$ are $\Rs$-definable, so $R$ is a direct product of $\Rs$-definable division rings, as claimed. 

By \cite{PS99}, every definable division ring $D$ is an associative division algebra over a definable real closed field $K$. That is, $D = K$ or $D = K(\sqrt{-1})$ or $D= \mathbb{H}(K)$.  
We are left to show that the number of non-isomorphic definable real closed fields appearing in the decomposition of $R$ is equal to $\dim R(1)$. We will prove it by induction on the number of division rings $R$ is decomposed in the product of. 

Suppose $R = D_1 \times \cdots \times D_n$, where $D_i$ is a \Rs-definable division ring over a definable real closed field $K_i$. We identify $D_i$ with $K_i$, $K_i(\sqrt{-1})$ or $\mathbb{H}(K_i)$. Let $\pi_i \colon R \to D_i$ be the canonical ring homomorphism. Note that 
$1 = (e_1, \dots, e_n)$, where $e_i$ is the multiplicative identity of $D_i$, so 
$R(1) \subseteq K_1 \times \cdots \times K_n$ and $\pi_i(R(1)) = K_i$ for each $i = 1, \dots, n$.

Suppose $n = 1$. In all three possible cases for $R = D$, $R(1) = K$ which is $1$-dimensional, as required.

Let $n > 1$. Define $S = D_2 \times \cdots \times D_n$ and let $e$ be its multiplicative identity. Set $s = \dim S(e)$. After reordering $D_2, \dots, D_n$  we can assume by induction hypothesis  that $S(e) = K_2 \times \cdots K_{s+1}$, where $K_i$ is not definably isomorphic to $K_j$ when $i \neq j$.

Note that $R(e) = \{0\} \times S(e)$ and $R(1) \subseteq K_1 \times S(e)$.  It follows that either $\dim R(1) = s$ or $\dim R(1) = s+1$.

Suppose $\dim R(1) = s$ and let $\pi \colon R \to S$ be the canonical ring homomorphism. 
Since $\pi(R(1)) = S(e)$ and $\dim R(1) = \dim S(e)$, the restriction of $\pi$ to $R(1)$ 
is a definable isomorphism with $S(e)$ by connectedness, as $(R^0, +)$ is torsion-free and every torsion-free definable group is definably connected. Since $R(1)$ projects surjectively onto $K_1$ in $D_1$, there is a definable ring embedding $i \colon K_1 \to K_2 \times \cdots \times K_{s+1}$
induced by $\pi$. If $i(K_1)$ maps nontrivially onto different $K_i$ and $K_j$, then they are definably isomorphic, contradiction. Therefore, there is exactly one $j \in \{2, \dots, s+1\}$ such that the projection of $i(K_1)$ onto $K_j$ is non-trivial, and so equal to $K_j$. As $K_1$ is definably isomorphic to $K_j$, the number of non-isomorphic definable real closed fields for $R$
is equal to $s = \dim R(1)$, as required.

Assume now $\dim R(1) = s + 1$. Then $R(1) = K_1 \times S(e)$. Suppose, by way of contradiction, $K_1$ is definably isomorphic to $K_i$ for some $i \in \{2, \dots, s+1\}$ and let $f \colon K_i \to K_1$ be a definable isomorphism. Then
\[
\{(f(\pi_i(x)), x) \in K_1 \times S : x \in S(e)\} 
\]

is a $s$-dimensional definable subring of $R$ containing $1$, contradiction.
\end{proof}    
\begin{rem}
Unlike the nilpotent case, the real closed fields $K_1, \dots, K_s$ from \Cref{theo:reduced} are not necessarily $\Rs$-definable. For instance, take \Rs\ to be the complex field.
\end{rem}

\begin{rem}
For unital (reduced) ring it is not true that $R(1)$ has maximal dimension among monogenic subrings. For instance, $K(\sqrt{-1})$ is monogenic, as for any $x \neq 0$, $\la x \df$ is the straight line through the origin and $x$, which is a subring if and only if $x \in K$. Therefore, whenever $x \notin K$, $R(x) = R$.
\end{rem}

\begin{cor}
Every $n$-dimensional definably connected reduced ring is elementarily equivalent to a $n$-dimensional reduced
associative $\R$-algebra.
\end{cor}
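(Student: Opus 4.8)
The plan is to transport the explicit decomposition furnished by \Cref{theo:reduced} across the elementary equivalence of real closed fields. Since $R$ is definably connected, $R = R^0$, so \Cref{theo:reduced} yields definable real closed fields $K_1, \dots, K_s$ and natural numbers $r_i, p_i, q_i$ together with a definable isomorphism
\[
R \;\cong\; \prod_{i=1}^{s}\bigl(K_i^{r_i}\times K_i(\sqrt{-1})^{p_i}\times\mathbb{H}(K_i)^{q_i}\bigr).
\]
I would take as target algebra the result of substituting $\R$ for every $K_i$, namely
\[
A \;=\; \prod_{i=1}^{s}\bigl(\R^{r_i}\times\C^{p_i}\times\mathbb{H}(\R)^{q_i}\bigr),
\]
where $\C=\R(\sqrt{-1})$. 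As a finite direct product of the division rings $\R$, $\C$ and $\mathbb{H}(\R)$ it is reduced, and as a finite direct product of finite-dimensional associative $\R$-algebras it is itself a finite-dimensional associative $\R$-algebra.

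Next I would establish $R\equiv A$ in the language of rings, factor by factor. Each $K_i$ is a real closed field, hence $K_i\equiv\R$ by completeness of the theory of real closed fields. The rings $K(\sqrt{-1})$ and $\mathbb{H}(K)$ are produced from a field $K$ by one fixed parameter-free interpretation --- the carrier $K^2$, respectively $K^4$, equipped with the standard complex, respectively quaternionic, multiplication, whose structure constants are integers --- and the very same interpretation produces $\C$ and $\mathbb{H}(\R)$ from $\R$. Since an interpretation carries elementarily equivalent structures to elementarily equivalent structures, $K_i\equiv\R$ gives $K_i(\sqrt{-1})\equiv\C$ and $\mathbb{H}(K_i)\equiv\mathbb{H}(\R)$. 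Finally, elementary equivalence is preserved under finite direct products (Feferman--Vaught), so assembling the factors gives $R\equiv A$.

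It remains to match dimensions, i.e.\ to see that $\dim_\R A=n$. Every real closed field definable in an o-minimal structure is $1$-dimensional (cf.\ \Cref{prop:1-dim} and \cite{Pi88,PS99}), so $\dim K_i=1$ and the factors $K_i$, $K_i(\sqrt{-1})$, $\mathbb{H}(K_i)$ have o-minimal dimensions $1$, $2$, $4$. Hence
\[
n=\dim R=\sum_{i=1}^{s}(r_i+2p_i+4q_i),
\]
which is exactly $\dim_\R A$, as the factors $\R$, $\C$, $\mathbb{H}(\R)$ contribute $1$, $2$, $4$ to the $\R$-dimension. Note that the $K_i$ need not be assumed pairwise non-isomorphic for this argument; distinctness was needed only for the uniqueness statements in \Cref{theo:reduced}.

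The only genuinely model-theoretic input is the stability of elementary equivalence under the two operations applied to the factors: passing through the fixed interpretations defining $K(\sqrt{-1})$ and $\mathbb{H}(K)$, and forming finite direct products. Of these the product step (Feferman--Vaught) is the substantive one, whereas the interpretation step and the dimension bookkeeping are routine once one records that the relevant real closed fields are $1$-dimensional.
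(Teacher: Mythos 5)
Your argument is correct and is exactly the (implicit) one the paper intends: the corollary is stated as an immediate consequence of \Cref{theo:reduced}, obtained by replacing each definable real closed field $K_i$ with $\R$ and invoking completeness of RCF, preservation of elementary equivalence under the fixed interpretations giving $K(\sqrt{-1})$ and $\mathbb{H}(K)$, and Feferman--Vaught for finite products. The dimension count via $n=\sum_i(r_i+2p_i+4q_i)$ is also the right bookkeeping, so nothing is missing.
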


\begin{cor}
Let $\Rs = (R, +, 0, \cdot)$ be a definably connected ring in an o-minimal expansion of a real closed field $K$. If $\Rs$ is reduced, $\Rs$ is definably isomorphic to
 
\begin{equation*}
  K^r \times K(\sqrt{-1})^s \times \mathbb{H}(K)^p 
\end{equation*}

\medskip
for some $r, s, p \in \N$.
 \end{cor}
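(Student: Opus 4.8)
The plan is to apply \Cref{theo:reduced} and then identify every auxiliary real closed field appearing there with the ground field $K$. First I would use definable connectedness: since $R = R^0$, the finite factor $F$ in the decomposition of \Cref{theo:reduced} is trivial, so that theorem yields a definable decomposition
\[
R = R_1 \times \cdots \times R_t, \qquad t = \dim R(1),
\]
in which each $R_i$ is $\Rs$-definable and definably isomorphic to $K_i^{r_i} \times K_i(\sqrt{-1})^{p_i} \times \mathbb{H}(K_i)^{q_i}$ for suitable $r_i, p_i, q_i \in \N$ and definable real closed fields $K_1, \dots, K_t$.

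The crux is to show that each $K_i$ is definably isomorphic to $K$ itself. This is precisely the step where the hypothesis that \M\ \emph{expands} a real closed field $K$ (rather than merely defining one) enters: in an o-minimal expansion of a real closed field, every definable real closed field is definably isomorphic to the ground field $K$ (see \cite{OPP96}, \cite{PS99}). Since each $K_i$ is real closed, hence formally real, it is carried to $K$ and not to the algebraically closed extension $K(\sqrt{-1})$. I expect this to be the only substantive point; everything else is bookkeeping.

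To finish, fix definable field isomorphisms $\varphi_i \colon K_i \to K$. Each $\varphi_i$ extends to definable ring isomorphisms $K_i(\sqrt{-1}) \to K(\sqrt{-1})$ and $\mathbb{H}(K_i) \to \mathbb{H}(K)$ by acting coordinatewise on the presentations of these algebras as $K_i^2$ and $K_i^4$ with their fixed structure constants, and coordinatewise on $K_i^{r_i}$; this produces a definable isomorphism $R_i \cong K^{r_i} \times K(\sqrt{-1})^{p_i} \times \mathbb{H}(K)^{q_i}$. Taking the direct product of these isomorphisms over $i = 1, \dots, t$ and setting $r = \sum_i r_i$, $s = \sum_i p_i$, $p = \sum_i q_i$ gives a definable isomorphism
\[
R \;\cong\; K^{r} \times K(\sqrt{-1})^{s} \times \mathbb{H}(K)^{p},
\]
as required.
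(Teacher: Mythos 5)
Your proposal is correct and follows the same route the paper intends: the corollary is stated as an immediate consequence of \Cref{theo:reduced} together with the standard fact (used elsewhere in the paper, e.g.\ in the proof of \Cref{theo:fields}) that in an o-minimal expansion of a real closed field every definable real closed field is definably isomorphic to the ground field. One cosmetic remark: since \Cref{theo:reduced} asserts the $K_i$ are pairwise non-definably-isomorphic, identifying them all with $K$ forces $t=1$, so your final product over $i$ is actually a single factor --- this does not affect the argument.
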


 
 \section{The Jacobson radical} \label{section:Jacobson}

 We now consider rings that are not nilpotent nor reduced. If a ring is not reduced, then it contains some non-zero nilpotent element. However, $\{0\}$ may be its only \emph{nilpotent ideal}. When this is the case, the ring is called \textbf{semiprime}. Semiprime definable rings will be studied and fully characterized in the next section. In this section we show that every definably connected ring has a maximal nilpotent ideal (that happens to be definable) coinciding with its Jacobson radical and the quotient is semiprime.

The \textbf{Jacobson radical} $J(R)$ of a ring $R$ is the ideal consisting of the elements in $R$ that annihilate all simple right (and/or left) $R$-modules. Equivalently,
\[
J(R) = \{a \in R : \forall\, r \in R\ \exists\, b \in R \mbox{ such that } bra-ra-b = 0\}.
\]

The above characterization shows that $J(R)$ is $\Rs$-definable.  

Recall that a \textbf{minimal right ideal} of a ring $R$ is a non-zero right ideal such that $\{0\}$ is its only proper right ideal. Similarly for a \textbf{minimal left ideal}. They do not always exist. For instance, a domain that is not a division ring contains no minimal right ideal.
 
\begin{lem}\label{lem:min-principal}
Let $\Rs = (R, +, 0, \cdot)$ be a definably connected ring. Set
\[
X = \{aR : a \in R \} \qquad \mbox{and}\qquad Y = \{Ra : a \in R\}.
\]

\begin{enumerate}[(i)]
\item Suppose $\Ann_1(R) = \{0\}$. Every element in $X$ of smallest positive dimension is a minimal right ideal of $R$, and every minimal right ideal belongs to $X$.
 
\medskip
\item Suppose $\Ann_2(R) = \{0\}$. Every element in $Y$ of smallest positive dimension is a minimal right ideal of $R$, and every minimal right ideal belongs to $Y$.
\end{enumerate}
\end{lem}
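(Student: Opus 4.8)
The plan is to exploit two structural facts about the sets $aR$. First, each $aR$ is automatically a right ideal: it is an additive subgroup by distributivity (since $ar_1 + ar_2 = a(r_1+r_2)$ and $-ar = a(-r)$), and $(aR)R = a(RR) \se aR$ because $R$ is closed under multiplication. Second, since $aR$ is the image of the definably connected group $(R,+)$ under the definable additive homomorphism $r \mapsto ar$, it is itself a definably connected definable subgroup of $(R,+)$. The hypothesis $\Ann_1(R) = \{0\}$ says precisely that $aR \neq \{0\}$ whenever $a \neq 0$; as a nonzero definably connected group is infinite, this gives $\dim aR > 0$ for every nonzero $a$. Since $R$ has finite dimension, $X$ contains elements of smallest positive dimension, so the statement is not vacuous.

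For the first assertion of (i), I would fix $aR \in X$ of smallest positive dimension $d$ and let $J$ be any nonzero right ideal contained in $aR$. Choosing $0 \neq b \in J$, the right ideal $bR$ satisfies $bR \se JR \se J \se aR$, and $\dim bR > 0$ by the annihilator hypothesis. Minimality of $d$ forces $\dim bR = d = \dim aR$. Here comes the decisive step: $bR$ is a definably connected definable subgroup of the definably connected group $aR$ of the same dimension, hence $bR = aR$. Consequently $aR = bR \se J \se aR$, so $J = aR$ and $aR$ is a minimal right ideal.

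For the second assertion of (i), let $I$ be an arbitrary minimal right ideal and pick $0 \neq a \in I$. Then $aR \se I$ because $I$ is a right ideal, and $aR \neq \{0\}$ by $\Ann_1(R) = \{0\}$. Thus $aR$ is a nonzero right ideal contained in the minimal right ideal $I$, whence $aR = I$ and $I \in X$. Part (ii) is the exact left--right dual: one replaces $aR$ by $Ra$, right ideals by left ideals, and $\Ann_1$ by $\Ann_2$, and runs the identical argument (so the conclusion there in fact concerns minimal \emph{left} ideals).

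The only step requiring genuine care is the passage from equal dimension to equality of the subgroups $bR$ and $aR$; this is where definable connectedness is essential, and it is the reason the hypothesis of definable connectedness on $\Rs$ is used, transferred to every $aR$ through the homomorphism $r \mapsto ar$. A point worth flagging is that $\Rs$ is not assumed unital, so one cannot appeal to $a \in aR$; the argument above is arranged to avoid this, using only that $aR$ is a right ideal and that $J$ (respectively $I$) contains a nonzero element whose right multiples remain inside.
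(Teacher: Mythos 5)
Your proof is correct and follows essentially the same route as the paper's: use $\Ann_1(R)=\{0\}$ to see that $bR$ is a nonzero (hence positive-dimensional) right ideal inside $aR$, invoke minimality of the dimension together with definable connectedness to conclude $bR=aR$, and for the converse note that $aR$ is a nonzero right ideal inside the minimal $I$. Your observation that the conclusion of (ii) should read ``minimal \emph{left} ideal'' is also correct; the paper's statement contains that slip.
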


\begin{proof}

\begin{enumerate}[(i)]
\item Suppose $I = aR \in X$ is of smallest positive dimension. Suppose $J \subseteq I$ is a right ideal and let $x \in J$, $x \neq 0$. Because $\Ann_1(R) = \{0\}$, $xR$ is nontrivial and  we have
$
xR \se J \se I = aR.
$
As $R$ is definably connected, so are $xR$ and $aR$. It follows that $xR = aR$ by minimality of $aR$. Thus $J = I$.

Suppose now $I$ is a minimal right ideal and let $x \in I$, $x \neq 0$. As $xR \subseteq I$, by minimality either
$xR = \{0\}$ or $xR = I$. In the first case, $x \in \Ann_1(R)$, contradiction. So it must be $xR = I$
and $I \in X$, as required.

\medskip
\item Similar to (i).
\end{enumerate}
\end{proof}

\begin{rem}
Not every minimal left/right ideal is of smallest possible dimension. For example, let 
$\M = (K, +, \cdot)$ be a real closed field. Set
\[
R = M_2(K) \times M_3(K).
\]

\medskip
$R$ is a unital ring, so $\Ann_1(R) = \Ann_2(R) = \{0\}$. As usual, let $E_{i, j}$ denote the elementary matrix with entry $1$ in position $(i, j)$ and $0$ elsewhere. The minimal right ideal
\[
I = \{0\} \times \left \{a E_{1, 1} + b E_{1, 2} + c E_{1, 3}
 : a, b, c \in K
\right  \}
\] 

\medskip
is equal to $xR$ for $x = (0, E_{1, 1})$ and is $3$-dimensional, while $yR$ for $y = (E_{1, 1}, 0)$ is $2$-dimensional.
\end{rem}

\begin{rem}
When $R$ is unital it is obvious that every minimal right/left ideal is as in \Cref{lem:min-principal}. On the other hand, not every ring with trivial one-sided annihilator is unital.  
But we will crucially make use of  \Cref{lem:min-principal} to show that semiprime definable rings are unital.
\end{rem}

\begin{lem}\label{lem:idempotent-unit}
Let $R$ be a definably connected ring. If $e$ is a nonzero idempotent element, then $R(e)$ is a unital subring and $e$ is its multiplicative identity.
\end{lem}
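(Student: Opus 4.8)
The plan is to exploit two facts already recorded in the excerpt: that $R(e)$ is commutative (it is of the form $R(a)$ for the single element $e$, and such subrings are noted to be commutative), and that $R(e)$ is by definition the \emph{smallest} definable subring of $R$ containing $e$. With these in hand, everything reduces to showing that left multiplication by $e$ is already surjective on $R(e)$, for then the idempotent law forces $e$ to act as identity.

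First I would introduce the set $eR(e) = \{ex : x \in R(e)\}$. This is definable, being the image of the definable map $x \mapsto ex$ restricted to the definable set $R(e)$; by left distributivity the map is an additive homomorphism, so $eR(e)$ is an additive subgroup, and since $e = e\cdot e$ with $e \in R(e)$ it contains $e$. Next I would verify that $eR(e)$ is closed under multiplication, and hence a subring: using commutativity of $R(e)$ together with $e^2 = e$, for $x,y \in R(e)$ one computes $(ex)(ey) = e^2(xy) = e(xy) \in eR(e)$. Thus $eR(e)$ is a definable subring of $R$ containing $e$.

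Now I would invoke minimality: since $R(e)$ is contained in every definable subring containing $e$, we get $R(e) \subseteq eR(e)$, and as the reverse inclusion is immediate, $R(e) = eR(e)$. Finally, every $x \in R(e)$ may be written $x = ey$ for some $y \in R(e)$, so $ex = e^2 y = ey = x$; hence $e$ is a left identity, and by commutativity of $R(e)$ a two-sided identity. Therefore $R(e)$ is a unital subring with multiplicative identity $e$, as claimed.

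The argument is short, and the only step needing genuine care is the closure of $eR(e)$ under multiplication, which is precisely where the commutativity of $R(e)$ enters; without it one would instead have to run a Fitting-type decomposition of $R(e)$ under the idempotent additive endomorphism $x \mapsto ex$, splitting $R(e)$ into its fixed part $eR(e)$ and the kernel $\{x : ex = 0\}$ and then killing the latter by minimality. I would also note that definable connectedness of $R$ is not actually exploited here; the conclusion rests only on the existence of a smallest definable subring (via DCC) and on the commutativity of $R(e)$.
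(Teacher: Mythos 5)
Your proof is correct, and it finishes by a genuinely different (and shorter) route than the paper's. Both arguments share the same pivot: $e\,R(e)$ is a definable subring containing $e=e^2$, so by minimality $e\,R(e)=R(e)$. The paper then takes a detour: it first decomposes $R(e)=F\oplus(A+B)$ into finite, torsion-free and $0$-Sylow parts to show that $R(e)$ is torsion-free (this is where definable connectedness of $R$ enters, via \Cref{prop:subgr}), and uses that to argue that the annihilator of $e$ in $R(e)$ is a $0$-dimensional, hence trivial, definable subgroup, so that $e(ea-a)=0$ forces $ea=a$. You bypass the torsion and annihilator analysis entirely: once every $x\in R(e)$ is of the form $ey$, idempotency gives $ex=e^2y=ey=x$ directly. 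As you observe, your version uses only DCC (for the existence of $R(e)$) and the commutativity of $R(e)$; definable connectedness of $R$ plays no role, so the lemma holds without that hypothesis. Note that commutativity is also silently needed in the paper's version to upgrade the left identity $ea=a$ to a two-sided one, so you are not assuming anything extra there. Two minor remarks: commutativity is not actually required for the multiplicative closure of $eR(e)$, since $(ex)(ey)=e(xey)$ with $xey\in R(e)$; and the paper's longer route does yield the intermediate fact that $(R(e),+)$ is torsion-free, though that is not part of the statement and follows from unitality anyway.
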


\begin{proof}
Set $S = R(e)$. Write $S = F \oplus (A + B)$, where $F$ is finite, $(A + B) = S^0$, with $A$ definable torsion-free and $B$ the $0$-Sylow of $S$. Then $e = x + a + b$ for some $x \in F$, $a \in A$ and $b \in B$. By \Cref{prop:subgr}, the torsion subgroup of $(R, +)$ is contained in $\Ann(R)$ and $F+B \se \Ann(S)$. It follows that 
\[
e^2 = (x + a + b)^2 = a^2 = x + a + b = e.
\]   

Since $A$ is an ideal (\Cref{prop:subgr}), $a^2 - a - b = x \in F \cap S^0 = \{0\}$ and $b = a^2 - a \in A$. 
Therefore $e \in A$ and $S = A$ is torsion-free.  

Note that $eS = S$, because $e = e^2 \in eS$, so $eS$ is a definable subring containing $e$. 
Therefore, the annihilator of $e$ in $S$ is trivial, because all definable additive subgroups of $S$ are definably connected. For each $a \in S$, $e^2 a = ea$ so $e(ea - a) = 0$
and $ea = a$. That is, $e$ is a multiplicative identity for $S$, as claimed.
\end{proof}

\begin{fact}[Brauer's Lemma] \label{Brauer}
Any minimal right ideal $I$ in a ring $R$ satisfies $I^2 = \{0\}$ or $I = eR$ for some idempotent element $e \in R$.  Any minimal left ideal $I$ in a ring $R$ satisfies $I^2 = \{0\}$ or $I = Re$ for some idempotent element $e \in R$. 
\end{fact}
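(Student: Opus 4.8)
The plan is to give the classical argument for Brauer's Lemma, which is purely algebraic and uses no o-minimality (this is exactly why the statement is recorded as a Fact). By symmetry it suffices to treat the case of a minimal right ideal $I$; the left-ideal statement follows verbatim with left and right multiplication interchanged. If $I^2 = \{0\}$ there is nothing to prove, so throughout I assume $I^2 \neq \{0\}$.

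Recalling the paper's convention that $I^2 = \bigcup_{a \in I} aI$, the assumption $I^2 \neq \{0\}$ means there is some $a \in I$ with $aI \neq \{0\}$. First I would check that $aI$ is again a right ideal of $R$ contained in $I$: it is an additive subgroup, $(ax)r = a(xr) \in aI$ for $x \in I$, $r \in R$ since $xr \in I$, and $aI \subseteq aR \subseteq I$ because $a \in I$ and $I$ is a right ideal. Thus $aI$ is a nonzero right ideal contained in the minimal $I$, so minimality forces $aI = I$. In particular there is $e \in I$ with $ae = a$.

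Next I would consider the right annihilator of $a$ inside $I$,
\[
K = \{x \in I : ax = 0\}.
\]
This is an additive subgroup, and it is a right ideal of $R$ contained in $I$, since $ax = 0$ gives $a(xr) = (ax)r = 0$ with $xr \in I$. As $ae = a \neq 0$ we have $e \notin K$, so $K$ is a proper right ideal of $R$ contained in $I$; by minimality $K = \{0\}$. From $ae = a$ I then get $ae^2 = (ae)e = ae = a$, whence $a(e^2 - e) = 0$, so $e^2 - e \in K = \{0\}$ and $e$ is idempotent; it is nonzero because $ae = a \neq 0$ forces $e \neq 0$. Finally $eR$ is a right ideal containing $e = e^2$, hence nonzero, and $eR \subseteq I$ since $e \in I$, so minimality gives $eR = I$, as required.

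I do not expect a genuine obstacle here, only a bookkeeping point that must not be skipped: the ring is not assumed unital, so at each stage I have to verify directly that $aI$, $K$ and $eR$ are honest right ideals of $R$ rather than appeal to a multiplicative identity. Once this is in place the argument is entirely elementary, and no definability or connectedness hypotheses are invoked.
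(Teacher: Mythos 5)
Your proof is correct and complete; it is the standard textbook argument for Brauer's Lemma (pass to $aI=I$ by minimality, kill the annihilator $K=\{x\in I: ax=0\}$ by minimality, deduce $e^2=e$ and $eR=I$). The paper records this statement as a \emph{Fact} and supplies no proof of its own, so there is nothing to compare against; your explicit checks that $aI$, $K$ and $eR$ are genuine right ideals in the absence of a unit are exactly the right level of care.
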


\begin{proof}[Proof of \Cref{theo:Jacobson}] Suppose $R$ is a $n$-dimensional definably connected ring and $J(R)$ its Jacobson radical. 

$(1) \Rightarrow (3)$. See \Cref{prop:nil-flag} and \Cref{cor:nilpotent-Rn+1=0}.

\medskip
$(3) \Rightarrow (2)$. Obvious.

\medskip
$(2) \Rightarrow (4)$. The Jacobson radical $J(R)$ contains every nil ideal of $R$ (See, for instance, Lam's book).

\medskip
$(4) \Rightarrow (5)$. Suppose $x \in R$ is an idempotent element. Since $R = J(R)$, using the definition of $J(R)$ given at the beginning of the section, for each $r \in R$ there is $b \in R$ such that  
$
rx = brx - b.
$ Take $r = x$. Then there is $b \in R$ with

\[
x^2 = x = bx - b.
\]

\medskip
Therefore, $x^2 = (bx - b)x = bx^2 - bx = bx - bx = 0$, and $x = 0$, as required.

 \medskip
$(5) \Rightarrow (1)$. We will prove by induction on $n = \dim R$ that if $R$ is not nilpotent, there is a nonzero idempotent.

If $n = 1$, then $R$ is a real closed field by \Cref{prop:1-dim}. Let $n > 1$. 

Assume $\Ann_1(R) \neq \{0\}$. By \Cref{prop:subgr}, $\Ann_1(R)$ cannot be finite, so the quotient $R/\Ann_1(R)$ is a definably connected ring of dimension smaller than $n$. Since $\Ann_1(R)$ is nilpotent and $R$ is not, $R/\Ann_1(R)$ is not nilpotent either. Indeed, if $I$ is a nilpotent ideal of $R$ with $I^k = \{0\}$ and $R/I$ is a nilpotent ring with $(R/I)^m = \{0\}$, then $R^{sm} = \{0\}$.

By induction hypothesis, 
$R/\Ann_1(R)$ contains some nonzero idempotent element $x+\Ann_1(R)$. Thus $x^2 - x \in \Ann_1(R)$. In particular,  
\[
(x^2 - x)x = x^3 - x^2 = 0.
\]

Therefore, $e = x^2$ is a nonzero idempotent in $R$. The case where $\Ann_1(R) = \{0\}$ and 
$\Ann_2(R) \neq \{0\}$ is similar.

Suppose now $\Ann_1(R) = \Ann_2(R) = \{0\}$. Then $(R, +)$ is torsion-free by \Cref{prop:subgr}. 

Let $a \in R$ such that $aR$ is of smallest dimension. By \Cref{lem:min-principal}, $aR$ is a minimal
right ideal. If $(aR)^2 \neq 0$, then $aR = eR$ for some idempotent element $e$ by Brauer's Lemma, and we are done.

Assume $(aR)^2 = 0$. Suppose $R$ is not commutative and set $I = \Ann_2(aR)$. Because $aR$ is a right ideal, $I$ is an ideal
and it is infinite, because it contains $aR$. If $I = R$, then $aR \subseteq \Ann_1(R) = \{0\}$, contradiction. Therefore, $I$ is a non-trivial proper definable ideal. If $I$ is not nilpotent, by induction hypothesis $I$ contains a non-zero idempotent element, as required. 

Assume $I$ is nilpotent and set $\ol R = R/I$. Note that $\ol R$ cannot be nilpotent, because $I$ is nilpotent and $R$ is not. By induction hypothesis, $\ol R$ contains a non-zero idempotent $\bar{u}$. Let $u \in R$ be in the pre-image of 
$\bar{u}$ and set $S = R(u)$. Since $\bar{u}$ is an idempotent in $\ol R$, $u^k \notin I$ for each $k \geq 1$. In particular, $u$ is not a nilpotent element and $S$ is not a nilpotent ring.

Because $R$ is not commutative, $R \neq S$ and by induction hypothesis $S$ has an idempotent element, as required.

Suppose now $R$ is commutative. Then $aR$ is an infinite proper nilpotent ideal. Set $\ol R = R/aR$. Because $\ol R$ is not nilpotent, by induction hypothesis it contains a non-zero idempotent $\bar{u}$ and we can assume $\ol R = \ol R (\bar{u})$.

By \Cref{fact:divisible-split}, there is an additive subgroup $B$ such that $R = aR \oplus B$. Let $b \in B$ in the pre-image of $\bar{u}$ and note that $R = aR + R(b)$. We can claim that $bR = R$.

If not, let $x \in \Ann(b)$ and write $x = \alpha + \beta$ with $\alpha \in aR$ and $\beta \in B$.
Then $bx = b \alpha + b \beta = 0$ and $b\beta \in aR$. On the other hand, 
by \Cref{lem:idempotent-unit} $\bar{u}$ is the multiplicative identity of $\ol R$, so $b\beta = \beta + \alpha'$
for some $\alpha' \in aR$. Thus $\beta \in B \cap aR = \{0\}$ and $\Ann(b) \se aR$.  
Because $aR$ is a minimal ideal, if $\Ann(b) \neq \{0\}$ it must be $\Ann(b) = aR$. In particular, $b \in \Ann(aR)$ and so $R(b) \subseteq \Ann(aR)$. But $aR$ has trivial multiplication, so $R = aR + R(b) \subseteq \Ann(aR)$ and $aR \subseteq \Ann(R)$, contradiction. Therefore $\Ann(b) = \{0\}$ and $bR = R$.

Let $e \in R$ such that $be = b$. Then $b(e^2 - e) = 0$ and $e \neq 0$ is an idempotent element. 

\medskip
Suppose now $R$ is not nilpotent. Since $J(J(R)) = J(R)$, the equivalence $(1)-(4)$ shows that $J(R)$ is nilpotent. Because $J(R)$ contains all nil ideals of $R$, in particular it contains all nilpotent ideals of $R$ and $R/J(R)$ is semiprime. Indeed, as noted above, if $I$ is a nilpotent ideal of $R/J(R)$ then its pre-image in $R$ is nilpotent too, because $J(R)$ is nilpotent, contradiction. The existence of a definable subring isomorphic to $R/J(R)$ such that $R = J(R) \oplus S$ will be proved in Section 7, after having   characterized semiprime rings in the next section.
\end{proof}

\begin{cor}\label{cor:commutative-Jacobson}
Let $R$ be a commutative definably connected ring. Then $J(R)$ coincides with the nilradical of $R$ and $R/J(R)$ is a reduced ring.
\end{cor}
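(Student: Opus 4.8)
The plan is to reduce everything to the structural information about $J(R)$ already contained in \Cref{theo:Jacobson}, together with the elementary fact that a commutative semiprime ring is reduced. First I would dispose of the degenerate case: if $R$ is nilpotent, then by the equivalences $(1)\Leftrightarrow(2)\Leftrightarrow(4)$ in \Cref{theo:Jacobson} the ring $R$ is nil and $R=J(R)$; since every element of a nil ring is nilpotent, the nilradical is all of $R$, so $J(R)=R$ equals the nilradical and $R/J(R)=\{0\}$ is trivially reduced.

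So assume $R$ is not nilpotent. \Cref{theo:Jacobson} then tells us that $J(R)$ is a nilpotent (definable) ideal and that $R/J(R)$ is semiprime. Since $R$ is commutative, so is the quotient $R/J(R)$, and the main remaining point is the standard observation that a commutative semiprime ring has no nonzero nilpotent elements: if $x^2=0$ then the ideal it generates is square-zero, forcing $x=0$, and a minimal-exponent argument extends this to all nilpotent elements. This already yields the second assertion, that $R/J(R)$ is reduced.

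For the first assertion I would prove the two inclusions separately. Because $J(R)$ is nilpotent, each of its elements is nilpotent, so $J(R)$ is contained in the nilradical. Conversely, given a nilpotent $x\in R$, its image in $R/J(R)$ is nilpotent, hence zero by the previous paragraph, so $x\in J(R)$; this gives the reverse inclusion and hence equality. It is worth noting that, by \Cref{prop:bound}, the nilradical is in fact the definable set $\{x\in R: x^{n+1}=0\}$, so the coincidence of $J(R)$ with the nilradical is an equality of definable ideals.

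The main obstacle here is essentially bookkeeping rather than a genuine difficulty: all of the hard analytic and model-theoretic content is packaged in \Cref{theo:Jacobson} (nilpotency and definability of $J(R)$, and semiprimeness of the quotient). What must be checked with care is only that the commutative semiprime quotient is reduced and that the nilpotent case is treated separately, since the structural conclusions of \Cref{theo:Jacobson} are stated under the hypothesis that $R$ is not nilpotent.
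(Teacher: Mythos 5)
Your proof is correct and follows exactly the route the paper intends: the corollary is stated without proof as an immediate consequence of \Cref{theo:Jacobson} (nilpotency of $J(R)$ and semiprimeness of $R/J(R)$), combined with the standard fact that a commutative semiprime ring is reduced. Your separate treatment of the nilpotent case and the two-inclusion argument for the nilradical are both sound and fill in precisely the bookkeeping the paper leaves implicit.
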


  \section{Semiprime rings}
  
  As recalled in the previous section, a ring is called \textbf{semiprime} when $\{0\}$ is its only nilpotent ideal.  Equivalently, the subring $aRa \neq \{0\}$ whenever $a \neq 0$. For instance, 
$M_k(\C) \times M_n(\R)$ is semiprime for any positive integer $k, n$. The goal of this section is to fully characterize semiprime definable ring by proving \Cref{theo:simple} and \Cref{theo:semiprime}. To this end, we will first characterize the smaller subclass of prime definable rings and then we will show that semiprime definable rings are direct products of prime definable rings.

Recall that a ring is called \textbf{prime} if $aRb \neq \{0\}$ whenever $a$ and $b$ are nonzero. The typical example of a prime ring is $M_n(D)$, where $D$ is a division algebra. We will show that every definably connected prime ring in o-minimal structures is of this form.

\medskip
Let $R$ be a ring. Recall that a \textbf{non-trivial idempotent} is an idempotent element different from $0$ and $1$. One says that $\{e_i: i \in I\}$ is a set of \textbf{orthogonal idempotents} when each $e_i$ is a non-trivial idempotent and $e_ie_j = e_je_i = 0$ whenever $i \neq j$. Moreover, a \textbf{primitive idempotent} is a non-zero idempotent $e$ such that $eR$ is indecomposable as a right $R$-module; that is, such that $eR$ is not a direct sum of two nonzero submodules. Equivalently, $e$ is a primitive idempotent if it cannot be written as 
$e = a + b$, where $a$ and $b$ are non-zero orthogonal idempotents in $R$.

\begin{prop}\label{prop:semiprime-sum}
Let $R$ be a definably connected semiprime ring. Either $R$ is a division ring or there is a finite set of primitive idempotents $\{e_1, \dots, e_n\}$ such that  
 \[
R = Re_1 \oplus \cdots \oplus Re_n, \label{eq:semiprime-dec} \tag{$\ast$}
\]
 
 \medskip \noindent
 where each $Re_i$ is a minimal left ideal of $R$. 
\end{prop}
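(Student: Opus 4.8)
The plan is to mimic the classical Wedderburn--Artin construction of a complete set of orthogonal primitive idempotents, using the dimension as a substitute for the descending chain condition.

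First I would record the basic consequences of semiprimeness. If $a \in \Ann_1(R)$ then $aRa \subseteq aR = \{0\}$, so $a = 0$; thus $\Ann_1(R) = \Ann_2(R) = \{0\}$, and by \Cref{prop:subgr} the group $(R,+)$ is torsion-free, hence every definable additive subgroup is definably connected. The same computation shows $R$ has no nonzero one-sided ideal $I$ with $I^2 = \{0\}$: if $I$ is a left ideal and $a \in I$, then $Ra \subseteq I$, so $aRa \subseteq aI \subseteq I^2$, and $I^2 = \{0\}$ forces $a = 0$. Consequently, by \Cref{lem:min-principal}(ii) a minimal left ideal exists (take $Ra$ of least positive dimension), and since minimal left ideals have nonzero square, Brauer's Lemma (\Cref{Brauer}) gives that every minimal left ideal has the form $Re$ for an idempotent $e$ lying in it; such $e$ is primitive, since a decomposition $e = u + v$ into orthogonal nonzero idempotents would split $Re = Ru \oplus Rv$, contradicting minimality.

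The heart of the argument is an inductive construction. Suppose $e_1,\dots,e_k$ are pairwise orthogonal idempotents with each $Re_i$ a minimal left ideal and the sum $Re_1 \oplus \cdots \oplus Re_k$ direct (the case $k=0$ is allowed). Put $f = e_1 + \cdots + e_k$, an idempotent, and note that right multiplication by $f$ is a left $R$-module projection of $R$ onto $Rf$ with kernel $\Ann_1(f)$, so $R = Rf \oplus \Ann_1(f)$ with $Rf = \bigoplus_{i\le k} Re_i$. If $\Ann_1(f) = \{0\}$ the construction stops and $R = \bigoplus_{i\le k} Re_i$. Otherwise $\Ann_1(f)$ is a nonzero definable left ideal; since $\Ann_2(R) = \{0\}$ it contains some $a$ with $Ra \neq \{0\}$, and choosing $Ra \subseteq \Ann_1(f)$ of least positive dimension gives a minimal left ideal of $R$ by the localized minimality argument (using that definable subgroups are definably connected). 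By Brauer this is $Re'$ with $e'$ an idempotent satisfying $e'f = 0$. I would then orthogonalize by setting $g = e' - fe'$: a direct check gives $g^2 = g$, $fg = gf = 0$, and $e'g = e'$, $ge' = g$, whence $Rg = Re'$ is still minimal and $g \neq 0$. Projecting the relations $fg = 0 = gf$ onto the summands $Re_i$ yields $e_i g = g e_i = 0$, so $e_1,\dots,e_k,g$ are again pairwise orthogonal and $Rf \oplus Rg$ is direct. Since $\dim R(f+g) = \dim Rf + \dim Rg > \dim Rf$, this strictly increases the dimension of the accumulated ideal, so the process terminates, producing $R = Re_1 \oplus \cdots \oplus Re_n$ with primitive idempotents $e_i$.

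Finally I would dispose of the dichotomy. If the construction terminates with $n = 1$, then $R = Re_1$ is a minimal left ideal, so $R$ has no proper nonzero left ideal; being semiprime, $R^2 \neq \{0\}$, hence $Ra = R$ for every $a \neq 0$, from which a standard elementary argument (producing a two-sided identity and two-sided inverses) shows $R$ is a division ring. If $n \ge 2$ we obtain the asserted finite set of orthogonal primitive idempotents with the decomposition $(\ast)$. I expect the main obstacle to be the orthogonalization step: one must check that replacing the idempotent $e'$ coming from Brauer's Lemma by $g = e' - fe'$ preserves both minimality of the generated left ideal and orthogonality to all previous $e_i$ on both sides. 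The identities $e'g = e'$ and $ge' = g$ are exactly what force $Rg = Re'$, and the direct-sum projection is what upgrades the single relation $e'f = 0$ to full two-sided orthogonality; the dimension bound then plays the role of the descending chain condition in the classical proof.
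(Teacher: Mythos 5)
Your proof is correct and follows essentially the same route as the paper: extract a minimal left ideal of least positive dimension, apply Brauer's Lemma to write it as $Re$, split off the corresponding Pierce summand, and iterate, with dimension and definable connectedness of additive subgroups standing in for the descending chain condition. The only real difference is that you additionally orthogonalize via $g = e' - fe'$ to get a pairwise orthogonal family, whereas the paper's iterated one-sided Pierce decompositions $J_{k+1} = J_k(1-e_{k+1})$ yield only the one-sided relations needed for the stated direct sum; your extra step is sound and is in fact what the paper later needs for \Cref{theo:semiprime}(v).
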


\begin{proof}
Note that whenever $R$ is a semiprime ring then
\[
\Ann_1(R) = \Ann_2(R) = \{0\},
\]

since the two one-sided annihilators have trivial multiplication, so they are nilpotent ideals. It follows that $(R, +)$ is torsion-free by \Cref{prop:subgr}.

By \Cref{lem:min-principal}, every principal left ideal $Ra$ of minimal dimension is a minimal left ideal. Fix one such $Ra$ of minimal dimension. Note that $Ra$ is not a null ring, otherwise the non-trivial $aRa$ is contained in $\Ann_2(R)$, contradiction. By Brauer's Lemma \Cref{Brauer}, $Ra = Re_1$ for some idempotent element $e_1$. 

If $Re_1 = R$, by minimality of $Re_1$ we have that for each non-zero $a \in R$, $\dim Ra = \dim Re_1 = \dim R$. It follows that $R$ has no zero-divisor and it is a division ring by \Cref{lem:zero-unit}. Suppose $Re_1 \neq R$ and set $J_1 = Re_1$.

We can now apply what is known as the left Pierce decomposition related to the idempotent $e_1$ (see \cite{Pierce}). Irrespective of $R$ being unital, one sets  
\[
J_2 = R(1-e_1) = \{x-xe_1: x \in R\}.
\]

\medskip
Note that $J_2$ is a left ideal too and $R = J_1 \oplus J_2$.

Suppose $J_2$ is a minimal left ideal. By Brauer's Lemma again, $J_2 = Re_2$ for some non-trivial idempotent $e_2$ and we are done.

If $J_2$ is not a minimal left ideal, let $I$ be a left ideal properly contained in $J_2$ and $a \in I$ non-zero. Then $Ra \subseteq I$ and $\dim Ra < \dim J_2$, because $(R, +)$ is torsion-free and all of its definable subgroups are definably connected. If $Ra$ is not minimal, let $I'$ be a left ideal properly contained in $Ra$ and take $b \in I'$ non-zero. Then $Rb \subseteq I'$ and $\dim Rb < \dim Ra$. If $Rb$ is not minimal, after a finite number of steps we get a minimal left ideal $Rx$. By Brauer's Lemma, $Rx = Re_2$
for some non-trivial idempotent $e_2$.  

We can now apply the left Pierce decomposition of $J_2$ related to the idempotent $e_2$ and write $J_2 = J_2e_2 \oplus J_3$, where $J_3 = J_2(1-e_2)$.

Because $J_2$ is a left ideal, both $J_2e_2$ and $J_3$ are left ideals too. Since $e_2 \in J_2e_2 \se Re_2$, $J_2e_2 = Re_2$ by minimality of $Re_2$ and 
\[
R = Re_1 \oplus J_2 = Re_1 \oplus Re_2 \oplus J_3.
\]

If $J_3$ is minimal, then $J_3 = Re_3$ for some idempotent $e_3$ and we are done. Otherwise, we repeat the process as above. If $\dim R = d$, after at most $d$ steps $J_n$ must be a minimal left ideal and \Cref{eq:semiprime-dec} holds. 

Since each $Re_i$ is a minimal left ideal, each $e_i$ is a primitive idempotent, as required.
 \end{proof}

\bigskip
A ring with non-trivial multiplication $R$ is called \textbf{simple} when $\{0\}$ and $R$ are its only ideals. We say that a definable ring $R$ is \textbf{definably simple} when 
$\{0\}$ and $R$ are its only \emph{definable} ideals.

\begin{proof}[Proof of \Cref{theo:simple}]
$(a) \Rightarrow (b).$ Obvious.

\medskip
$(b) \Rightarrow (c).$ If $R$ is definably simple, then it is definably connected, as $R^0$ is a definable ideal by \Cref{prop:subgr}. Since $\Ann(R) = \{0\}$, $(R, +)$ is torsion-free by \Cref{prop:subgr} and $R$ is not nilpotent by \Cref{prop:nil-flag}. 

If $R$ is not semiprime, then it contains a proper non-trivial nilpotent ideal $I$ and thus the Jacobson radical $J(R)$ is a proper non-trivial definable ideal by \Cref{theo:Jacobson}, leading to a contradiction. Therefore, $R$ is semiprime.

If $R$ is not prime, there are $a, b \in R$ non-zero such that $aRb = \{0\}$. Then $a \in \Ann_1(Rb)$ which is a definable ideal of $R$. Since $R$ is definably simple, it must be $R = \Ann_1(Rb)$. In particular, $bRb = \{0\}$ and $R$ is not semiprime, contradiction. It follows that $R$ is prime, as required.

\medskip
$(c) \Rightarrow (d).$ Suppose $R$ is definably connected and prime. If $R$ is a division ring, then $n = 1$ and there is nothing to prove. Suppose $R$ is not a division ring. By \Cref{prop:semiprime-sum}, $R$ can be decomposed as
\[
R = Re_1 \oplus \cdots \oplus Re_n, 
\]

where each $e_i$ is a non-trivial idempotent and $Re_i$ is a minimal left ideal. Recall that $(R, +)$ is torsion-free (see proof of \Cref{prop:semiprime-sum}), so every definable additive subgroup is definably connected. We claim that for each $i = 1, \dots, n$ the subring $e_iRe_i$ is a division ring 
with $1 = e_i$.

Clearly, $e_i$ is a multiplicative identity, because it is an idempotent element. Let $e_ixe_i \neq 0$. Therefore, $Re_ixe_i$ is a nonzero left ideal contained in $Re_i$. By minimality, $Re_ixe_i = Re_i$ and there is some $r \in R$ such that $re_ixe_i = e_i$. It follows that 
\[
(e_ire_i)(e_ixe_i) = e_i
\]

\medskip
and $e_ixe_i$ is invertible in $e_iRe_i$, as claimed.

Set $K_i = R(e_i) \subset e_iRe_i$. We know from \cite{PS99} that $K_i$ is a real closed field and $e_iRe_i$ is $K_i$-definable (see \Cref{theo:reduced}). It is easy to check that the definable map 
$K_i \times Re_i \to Re_i$  given by $(k, a) \mapsto ak$ provides a scalar multiplication making $Re_i$ a definable $K_i$-vector space.

Next, we need to show that all $K_i$'s are definably isomorphic to conclude that $R$ is a definable $K$-vector space, where $K = K_i$ for each $i = 1, \dots, n$.

Fix $i, j \in \{1, \dots, n\}$ and consider the subring $S = e_iRe_j$. Note that $S \neq \{0\}$, because $S$ is prime.
 We claim that for each non-zero $x \in S$, $\la x \df$ is $1$-dimensional and definably isomorphic to both $(K_i, +)$ and $(K_j, +)$.

Write $x = e_ire_j$, with $r \in R$. As $x \in K_ire_j$ and the latter definable group is $1$-dimensional, it follows that $\la x \df = K_ire_j$. Similarly, $x \in e_irK_j$ and $\la x \df = e_irK_j$. Therefore, $K_ire_j = e_irK_j$ and our claim is proved.

Let $f \colon K_i \to K_j$ be the corresponding definable group isomorphism and note that $f(e_i) = e_j$. Fix a non-zero $x \in K_i$, let $y = f(x)$ and set 
\[
G = \{k \in K_i: f(kx) = f(k)y\}.
\]

\medskip
It is easy to check that $G$ is a definable subgroup of $(K_i, +)$ containing $e_i$. Thus $G = K_i$. As $x$ was an arbitrary element of $K_i$, it follows that $f$ is a ring isomorphism and $K_i$ is definably isomorphic (as a field) to $K_j$ for each $i, j \in \{1, \dots, n\}$.

Since $R$ is a definable $K$-vector space, it is a (finite dimensional) definable associative $K$-algebra by \Cref{prop:algebras}. By the celebrated Wedderburn's Theorem, every prime nonzero finite dimensional $K$-algebra $R$ is simple and it is $K$-definably isomorphic to $M_n(D)$, where 
$D = e_1Re_1$, for some $n \in \N$.  

\medskip
$(d) \Rightarrow (a).$ It is well-known that $M_n(D)$ is simple.
\end{proof}


We extract the following immediate consequence of \Cref{theo:simple} for later use.

\begin{cor} \label{cor:prime-unital-rcf}
Let $R$ be a definably connected prime ring. Then $R$ is unital and $R(1)$ is a real closed field.
\end{cor}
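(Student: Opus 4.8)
The plan is to derive the statement directly from the classification in \Cref{theo:simple} and then read off both conclusions from the explicit matrix description. First I would check that the hypotheses of \Cref{theo:simple} are met. Discarding the trivial case $R = \{0\}$, a nonzero definably connected ring is infinite (a non-trivial definably connected group is infinite), and if $R$ were a null ring we would have $aRa = \{0\}$ for every $a$, contradicting primeness for any nonzero $a$; hence $R$ has non-trivial multiplication. So \Cref{theo:simple} applies, and since $R$ is definably connected and prime (condition $(c)$), there is a definable real closed field $K$ and a $K$-definable division ring $D$ with $R$ definably isomorphic to $M_n(D)$ for some $n \geq 1$. As $M_n(D)$ is unital with identity the identity matrix $I_n$, it follows at once that $R$ is unital.

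Next I would identify $R(1)$. Transporting along the isomorphism, I may assume $R = M_n(D)$ and $1 = I_n$. The scalar matrices $K I_n$ form a $1$-dimensional definable subring isomorphic to $K$ and containing $1$. Since $\la 1 \df$ is the smallest definable subgroup of $(R,+)$ containing $1 = I_n$, it is contained in $K I_n$; and as $K I_n \cong (K,+)$ is $1$-dimensional and torsion-free it has no proper non-trivial definable subgroup, so the non-trivial subgroup $\la 1 \df$ must equal $K I_n$. Because $K I_n$ is already closed under multiplication, $R(1) = K I_n \cong K$, which is a $1$-dimensional definable field and hence real closed by Pillay \cite{Pi88}, exactly as invoked in \Cref{prop:1-dim}.

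The only step requiring any care is this identification $\la 1 \df = K I_n$, i.e.\ that the definable subring generated by the identity is precisely the scalar copy of $K$ and in particular $1$-dimensional; everything else is immediate from the matrix form. Should one prefer an argument avoiding the explicit scalars, I would instead note that $R(1)$ is commutative (being $R(a)$ for $a = 1$) and that, since $1$ is central, $R(1)$ lies in the center $Z(R)$, which in each of the three possible cases for $D$ is a field; thus $R(1)$ is a definably connected commutative domain with identity, so by \Cref{lem:zero-unit} each of its nonzero elements is a unit and $R(1)$ is a field, whose $1$-dimensionality (again from $\la 1 \df$ being $1$-dimensional) yields real closedness via \cite{Pi88}.
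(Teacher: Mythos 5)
Your proposal is correct and follows the same route as the paper, which presents this corollary as an immediate consequence of \Cref{theo:simple}: you verify the hypotheses (infinitude and non-trivial multiplication from primeness), invoke the implication $(c)\Rightarrow(d)$ to get $R\cong M_n(D)$, and identify $R(1)$ with the scalar copy $K I_n$ via the fact that a $1$-dimensional torsion-free definable group has no proper non-trivial definable subgroups. The only added value is that you make explicit the identification $R(1)=K I_n$, which the paper leaves implicit (and which is consistent with its later remark that $R(x)=K$ exactly when $x\in K$ in the monogenic case).
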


Recall that a ring $R$ is called \textbf{semisimple} when it is Artinian and its Jacobson radical is trivial. Note that a simple ring is not necessarily semisimple because it may be not Artinian. The Weyl algebra is an example of such simple non-Artinian ring. However, \emph{definable} simple groups are semisimple by \Cref{theo:simple}.

 \begin{proof}[Proof of \Cref{theo:semiprime}]
 $(i) \Leftrightarrow (iii).$
If $R$ is semiprime, then $R$ has no nonzero nilpotent ideal. By \Cref{theo:Jacobson}, $J(R)$ is nilpotent, so clearly $J(R) = \{0\}$. Conversely, by \Cref{theo:Jacobson} $J(R)$ contains every nilpotent ideal of $R$, so if $J(R) = \{0\}$, then $R$ has no nonzero nilpotent ideal and it is semiprime.

\medskip
 $(i) \Leftrightarrow (iv).$
Suppose $R$ is semiprime. We want to show that $R$ is a direct product of simple $R$-definable rings. If $R$ is prime, then $R$ is simple by \Cref{theo:simple}, and there is nothing to prove. Suppose $R$ is not prime and let $I$ be a $R$-definable proper non-zero ideal of $R$, that we know exists by the proof of \Cref{theo:simple}. 
We claim that $I$ is semiprime.

If not, $J(I) \neq \{0\}$ by the equivalence $(i) \Leftrightarrow (iii)$ above. Although in general an ideal of an ideal is not necessarily an ideal of the ambient ring, this is the case for the Jacobson radical of an ideal. This is surely well-known, but as we could not find a reference, we add an easy argument below for completeness.

Let $x \in J(I)$ and $r \in R$. We want to show that $rx \in J(I)$. Because $I$ is an ideal, 
$rx \in I$. By the characterization of $J(I)$ given at the beginning of Section 5, we need to check that for each $s \in I$ there is $b \in I$ such that 
\[
bsrx - srx - b = 0.
\]
  
  \medskip
Since $sr \in I$ it follows that such $b$ exist because $x \in J(I)$. The other case $xr \in J(I)$ is similar by using the equivalent symmetric characterization
\[
J(R) = \{a \in R : \forall\, r \in R\ \exists\, b \in R \mbox{ such that } bar-ar-b = 0\}.
\]

But $R$ is semiprime and $\{0\}$ is its only nilpotent ideal, so $J(I) = \{0\}$ and $I$ is semiprime.  

 By induction hypothesis, $I$ is a direct product of simple 
 definable rings. By \Cref{theo:simple}, each simple direct factor is unital, hence $I$
is unital. Let $e \in I$ be its unity. 

Since $I$ is an ideal, clearly $Re \se I$. Conversely, $I = Ie \se Re$, so $I = Re$.

For each $x \in R$, $ex$ and $xe$ belong to $I$, so $ex = (ex)e$ and $xe = e(xe)$. It follows that $ex = xe$ and $e \in Z(R)$. In general, the left Pierce decomposition 
$R = Re \oplus R(1-e)$ recalled in the proof of \Cref{prop:semiprime-sum} can be refined into the two-sided Pierce decomposition 
\[
Re = eRe \oplus (1-e)Re \qquad R(1-e)= eR(1-e) \oplus (1-e)R(1-e),
\]

where 
\[
(1-e)Re = \{xe - exe : x \in R \}, \quad eR(1-e) = \{ex - exe : x \in R\}, 
\]
\[
(1-e)R(1-e) = \{x - ex - xe + exe : x \in R\}.
\] 
 
 \medskip
 However, since $e$ is central, $(1-e)Re = eR(1-e) = \{0\}$. Set $J = (1-e)R(1-e)$.
 Since $I = eRe \se \Ann(J)$, $J$ is an ideal too and $R = I \oplus J = I \times J$.  
 
 The same proof used above for $I$ shows that $J$ is semiprime too, so by induction hypothesis $J$ is a direct product of simple definable rings, and we are done.

Conversely, a direct product of simple rings is clearly semiprime.

\medskip
$(ii) \Leftrightarrow (iv).$
A ring is semisimple if and only if it is Artinian and its Jacobson radical is zero (find a reference). So if $R$ is semisimple, it is semiprime and a direct product of simple $R$-definable rings by the equivalences $(iii) \Leftrightarrow (i) \Leftrightarrow (iv)$ proved above. 

Conversely, suppose $R$ is a direct product of simple definable rings. Then every ideal of $R$ is a product of simple direct factors, hence definable. By DCC on definable subgroups, $R$ is Artinian and therefore semisimple.

\medskip
$(i) \Leftrightarrow (v).$ If $R$ is semiprime, by the equivalence $(i) \Leftrightarrow (iv)$ and \Cref{theo:simple},
$R$ is a direct product of simple definable rings $S_i$ of the form $S_i = M_{n_i}(D_i)$
and the required set of idempotents is the union of the canonical primitive idempotents of the $S_i$'s.

Conversely, if $R$ is not semiprime, then $J(R) \neq \{0\}$ by the equivalence 
$(i) \Leftrightarrow (iii)$. Let $a \in J(R)$. Then $aR \se J(R)$. Note that $aR \neq \{0\}$, because $R$ is unital. Moreover, $(R, +)$ is torsion-free, so if $aR$ is not minimal,  
we can take a smaller right ideal in $aR$ until we find a minimal principal right ideal by a dimension argument. As $\{e_iR : i = 1, \dots, n\}$ is the set of minimal right ideals of $R$, then $e_iR \se aR$ for some $i$ and $e_i \in J(R)$, in contradiction with $J(R)$ being nilpotent. Thus $J(R) = \{0\}$ and $R$ is semiprime.
\end{proof}

\begin{cor}
Every $n$-dimensional definably connected semiprime ring is elementarily equivalent to a $n$-dimensional semiprime associative $\R$-algebra.
\end{cor}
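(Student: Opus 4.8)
The plan is to use the explicit decomposition of semiprime definable rings to reduce the statement to the completeness of the theory of real closed fields together with standard preservation properties of elementary equivalence. First I would apply the equivalence $(i) \Leftrightarrow (iv)$ of \Cref{theo:semiprime} and \Cref{theo:simple} to write
\[
R = \prod_{i=1}^{s} M_{n_i}(D_i),
\]
where each $M_{n_i}(D_i)$ is a simple $\Rs$-definable ring, $D_i$ is a definably connected division ring over a definable real closed field $K_i$, and by \cite{PS99} each $D_i$ is one of $K_i$, $K_i(\sqrt{-1})$ or $\mathbb{H}(K_i)$. Crucially, the multiplicative structure of $M_{n_i}(D_i)$ is given by a fixed family of polynomial structure constants over $K_i$ (the integer constants defining $\C$ and $\mathbb{H}$ over their base field, together with the matrix product) that do not depend on $K_i$; thus $M_{n_i}(D_i)$ is obtained from $K_i$ by a single interpretation scheme that is uniform in the field.

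Next I would transfer each factor to $\R$. Let $D_i^{\R} \in \{\R, \C, \mathbb{H}\}$ be the division algebra of the same type as $D_i$. Since $K_i$ is a real closed field and the theory of real closed fields is complete, $K_i \equiv \R$; applying the uniform interpretation scheme to each side gives $M_{n_i}(D_i) \equiv M_{n_i}(D_i^{\R})$, because interpretations preserve elementary equivalence. Setting
\[
R' = \prod_{i=1}^{s} M_{n_i}(D_i^{\R}),
\]
a product of simple rings and hence a semiprime finite-dimensional associative $\R$-algebra, I would then invoke the preservation of elementary equivalence under finite direct products (a case of the Feferman--Vaught theorem): from $M_{n_i}(D_i) \equiv M_{n_i}(D_i^{\R})$ for each $i$ it follows that $R \equiv R'$ as rings. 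This is exactly the argument already used for the reduced case via \Cref{theo:reduced}.

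Finally I would check the dimension. Since each $K_i$ is a $1$-dimensional definable group and $D_i$ has $K_i$-dimension $d_i \in \{1,2,4\}$ according to its type, the o-minimal dimension of the $i$-th factor is $\dim M_{n_i}(D_i) = n_i^2 d_i = \dim_{\R} M_{n_i}(D_i^{\R})$; summing over $i$ gives $\dim_{\R} R' = \dim R = n$. Thus $R'$ is an $n$-dimensional semiprime associative $\R$-algebra elementarily equivalent to $R$. The only point requiring care is the uniformity of the interpretation — that the same formulas define the ring over every real closed field, including $\R$ — which is immediate from the explicit descriptions supplied by \Cref{theo:simple} and \cite{PS99}; no genuine obstacle remains, the argument being a soft transfer via completeness of $\mathrm{RCF}$ and the stability of $\equiv$ under finite products and interpretations.
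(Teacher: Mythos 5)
Your proposal is correct and follows exactly the route the paper intends: the corollary is stated as an immediate consequence of \Cref{theo:semiprime}$(iv)$ and \Cref{theo:simple}, with the transfer to $\R$ via completeness of RCF, uniformity of the interpretation of $M_{n_i}(D_i)$ in $K_i$, and preservation of $\equiv$ under finite products left implicit. Your write-up simply makes these standard steps explicit, including the dimension count $\dim M_{n_i}(D_i)=n_i^2 d_i$, so there is nothing to add.
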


\begin{cor}
Every  $n$-dimensional definably connected semiprime ring in an o-minimal expansion of a real closed field $K$ is a $n$-dimensional semiprime associative $K$-algebra.
 \end{cor}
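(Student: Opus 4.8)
The plan is to reduce to the base field via the structural results already established and then invoke \Cref{prop:algebras}. First I would apply the equivalence $(i)\Leftrightarrow(iv)$ of \Cref{theo:semiprime} to write $R$ as a direct product of simple definable rings $R = S_1 \times \cdots \times S_m$. By \Cref{theo:simple}, each factor admits a definable isomorphism $S_i \cong M_{n_i}(D_i)$, where $K_i$ is a definable real closed field and $D_i$ is a definably connected $K_i$-definable division ring; in particular each $S_i$ comes equipped with a definable $K_i$-vector space, indeed $K_i$-algebra, structure, coming from the identification of $D_i$ with one of $K_i$, $K_i(\sqrt{-1})$, or $\mathbb{H}(K_i)$.

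The decisive step is to identify every $K_i$ with the ambient base field $K$. Here I would use the fact that in an o-minimal expansion of a real closed field $K$ there is, up to definable isomorphism, only one definable real closed field, namely $K$ itself; this is a standard consequence of the Peterzil--Starchenko trichotomy (cf.\ \cite{PS99}). Choosing definable field isomorphisms $K \to K_i$ and transporting the scalar action along them endows each $S_i$, and hence their direct product $R$, with a definable $K$-vector space structure.

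Once $R$ is known to be a definable $K$-vector space, \Cref{prop:algebras} upgrades it to a definable associative $K$-algebra, which is semiprime since $R$ is. Because $K$ is $1$-dimensional, the $K$-dimension of $R$ equals its o-minimal dimension, so $R$ is $n$-dimensional as a $K$-algebra, as claimed. The one genuinely non-formal ingredient, and the expected main obstacle, is the uniqueness-of-the-field step: one must know that the real closed fields $K_i$ manufactured inside $R$ by the structure theorem are definably isomorphic to the given base field $K$. This is precisely where the hypothesis that $\M$ expands a field is indispensable, and it is what allows the conclusion to be strengthened from the mere elementary equivalence to an $\R$-algebra of the preceding corollary to an honest definable $K$-algebra structure.
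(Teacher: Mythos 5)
Your argument is correct and is essentially the proof the paper intends: decompose $R$ into simple definable factors via \Cref{theo:semiprime} and \Cref{theo:simple}, use that every definable real closed field in an o-minimal expansion of a real closed field $K$ is definably isomorphic to $K$, and conclude with \Cref{prop:algebras}. The only quibble is the attribution of the uniqueness-of-the-field step: the relevant source is \cite{OPP96} (Otero--Peterzil--Pillay) rather than \cite{PS99}, which is Peterzil--Steinhorn on definable compactness.
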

 
 \section{The general case}
 
In this final section we study arbitrary definable rings. 
We start by seeing that every definably connected ring with non-trivial multiplication defines an infinite field:

\begin{proof}[Proof of \Cref{theo:field}]
Let \rng\ be a definably connected ring with non-trivial multiplication. We will prove our claim by induction on $n = \dim R$.

If $n = 1$, $\Rs$ itself is a real closed field by \Cref{prop:1-dim}. Let $n > 1$.

If $R$ is simple, by \Cref{theo:simple} $R$ is definably isomorphic to $M_n(D)$ for some $n \in \N$, where $D$ is a definably connected division ring. Therefore, there is a non-trivial idempotent $e \in R$ such that $D = eRe$ is $\Rs$-definable. By \cite{PS99}, as recalled in the proof of \Cref{theo:reduced}, either $D$ is a field or $D = \mathbb{H}(K)$ for some definable real closed field $K$, where $\mathbb{H}(K)$ denotes the quaternion algebra over $K$.  In the first case, we are done. In the second case, let $a \in K$, $a \neq 0, 1$. Then $C(a) = K$ and $K$ is $\Rs$-definable. 

If $R$ is semiprime and not simple, by \Cref{theo:semiprime} there is an integer $k > 1$ and simple definable subrings $S_1, \dots, S_k$ such that 
\[
R = S_1 \times \cdots \times S_k.
\] 

\medskip
Let $e_i$ be the unity of $S_i$ for each $i = 1, \dots, k$. Then $S_1 = \Ann(e_2 + \cdots + e_k)$
is $\Rs$-definable. By the simple case, there is a $S_1$-definable infinite field $K$ and we are done.

If $R$ is not semiprime, by \Cref{theo:semiprime}, $J(R) \neq \{0\}$. If $R$ is not nilpotent, then $R/J(R)$ is a semiprime $\Rs$-definable ring by \Cref{theo:Jacobson}, and we are reduced to the semiprime case above.

Finally, if $R = J(R)$ is nilpotent, see \Cref{theo:nilpotent}.
 \end{proof}

 \begin{prop} \label{prop:2-dim}
Let \rng\ be a $2$-dimensional definably connected ring. Assume \Rs\ is not nilpotent nor reduced. There is a \Rs-definable real closed field $K$ such that 
\begin{enumerate}[(i)]
\item $\Rs$ is a definable associative $K$-algebra, or

\medskip
\item $\Rs = K \times R_0$, where $R_0$ is a $1$-dimensional $\Rs$-definable group with trivial multiplication. 
\end{enumerate}
\end{prop}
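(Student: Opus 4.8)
The plan is to locate the field as the quotient $K = R/J(R)$, split off a complementary subfield using \Cref{theo:Jacobson}, and then decide between the two conclusions by analysing how the idempotent $1_K$ acts on the radical. First I observe that $R$ cannot be semiprime: by \Cref{theo:semiprime} and \Cref{theo:simple} a semiprime definably connected ring is a direct product of blocks $M_{n_i}(D_i)$, and the smallest non-reduced block $M_2(K)$ has dimension $4$, so a $2$-dimensional semiprime ring is a product of division rings and hence reduced. Since $R$ is not reduced, the equivalence $(i)\Leftrightarrow(iii)$ of \Cref{theo:semiprime} gives $J := J(R) \neq \{0\}$, while $J \neq R$ because $R$ is not nilpotent (\Cref{theo:Jacobson}). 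Here $J$ is $\Rs$-definable and nilpotent, and $K := R/J$ is an $\Rs$-definable, definably connected, semiprime ring. I check $\dim J = 1$: as $\Ann(R)^2 = \{0\}$, $\Ann(R)$ is a nilpotent ideal, so $\Ann(R) \subseteq J$; if $\Ann(R) \neq \{0\}$ it is infinite (definably connected by \Cref{prop:subgr}), and if $\Ann(R) = \{0\}$ then $(R,+)$ is torsion-free by \Cref{prop:subgr}, so the nonzero subgroup $J$ is again infinite. Either way $\dim J \geq 1$, and $\dim(R/J) \geq 1$ forces $\dim J = 1$; thus $K = R/J$ is a $1$-dimensional definably connected semiprime ring, hence a real closed field by \Cref{prop:1-dim}. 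By \Cref{theo:Jacobson} I may write $R = J \oplus S$ with $S$ a definable subring definably isomorphic to $K$; let $e \in S$ be the identity of $S$, a nonzero idempotent with $e + J = 1_K$.

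Next I record that $J$ is a null ring and set up the key dichotomy. Its connected component $J^0$ is again an ideal, and $R/J^0$ is definably connected; since definably connected abelian groups are divisible and a divisible group has no nonzero finite direct summand, the finite group $J/J^0$ vanishes, so $J$ is definably connected. A $1$-dimensional definably connected nilpotent ring is null by \Cref{prop:1-dim}, hence $J^2 = \{0\}$. Now consider the definable additive endomorphisms $L_e, R_e$ of $J$ given by left and right multiplication by $e$. As $e^2 = e$ we have $L_e^2 = L_e$ and $R_e^2 = R_e$, so each splits $J$ into the direct sum of its definable kernel and image; by connectedness of $J$ one summand is trivial, and therefore $L_e, R_e \in \{0, \id_J\}$.

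Suppose first $L_e = R_e = 0$, that is $eJ = Je = \{0\}$. Because $S \cong K$ is a field with identity $e$, the left-multiplication map $S \to \End(J)$ is a ring homomorphism sending $1_S = e$ to $L_e = 0$, so it is identically zero; thus $SJ = \{0\}$, and symmetrically $JS = \{0\}$. Together with $J^2 = \{0\}$ and $S^2 = S$, the additive decomposition $R = S \oplus J$ becomes a ring direct product $R = S \times J$. Taking $K := S \cong R/J$ ($\Rs$-definable and real closed) and $R_0 := J$, a $1$-dimensional $\Rs$-definable group with trivial multiplication, yields conclusion (ii).

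Otherwise one of $L_e, R_e$ equals $\id_J$; say $L_e = \id_J$ (the case $R_e = \id_J$ being symmetric, using a right $K$-module structure). Then $e$ is a left identity for all of $R$: for $r \in R$ write $er = r + j_0$ with $j_0 \in J$ (possible since $e + J = 1_K$), apply $e$ on the left, and use $e^2 = e$ together with $L_e = \id_J$ to get $er = e(er) = er + j_0$, forcing $j_0 = 0$ and $er = r$. Transporting the field $S$ along $S \cong K$, the assignment $(k,r) \mapsto s\,r$ (ring multiplication, $s \in S$ corresponding to $k \in K$) is a definable scalar multiplication on $R$ under which $1_K$ acts as the identity, so $(R,+)$ is a definable $K$-vector space. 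By \Cref{prop:algebras}, $R$ is then a definable associative $K$-algebra, which is conclusion (i), with $K \cong R/J$ still $\Rs$-definable. The main obstacle is exactly this dichotomy: one must force the radical $J$ to be either annihilated by $e$ from both sides (giving the split product (ii)) or to carry $e$ as a one-sided identity (giving the $K$-vector-space structure (i)), and the clean separation of these alternatives rests on the idempotent-endomorphism argument on the $1$-dimensional connected group $J$ together with the subfield $S$ provided by the splitting in \Cref{theo:Jacobson}.
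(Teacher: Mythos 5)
Your argument has a circularity problem at its foundation. You invoke the final clause of \Cref{theo:Jacobson} to write $R = J \oplus S$ with $S$ a \emph{definable} subring definably isomorphic to $R/J$, but the paper explicitly defers the proof of that clause to Section 7: it is established inside the proof of \Cref{theo:main}, whose base case for $\dim J = 1$, $\dim R/J = 1$ is precisely \Cref{prop:2-dim}. So you are assuming, for $2$-dimensional rings, essentially the statement to be proved. What Section 5 does give you unconditionally is a nonzero idempotent $e$ (equivalence $(1)\Leftrightarrow(5)$ of \Cref{theo:Jacobson}) and the unital definable subring $R(e)$ with identity $e$ (\Cref{lem:idempotent-unit}). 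If $\dim R(e) = 1$ then $R(e)$ is a real closed field, $R = J \oplus R(e)$ by dimension, and your $L_e, R_e$ dichotomy does go through with $S = R(e)$ --- this part of your argument is a clean alternative to the paper's commutative/centerless case split. But if $\dim R(e) = 2$, i.e.\ $R$ is unital and commutative with $R(1) = R$, no $1$-dimensional definable subfield is available a priori, and your scalar multiplication $(k,r) \mapsto sr$ has no definable field $S \subseteq R$ to act by. This is exactly the hard case: the paper handles it by observing that each $1+a$ with $a \in J$ is a unit, so $r \mapsto (1+a)r$ is a definable automorphism of $(R,+)$, and then citing \cite{Co21} to split $(R,+)$ into definable $1$-dimensional subgroups before applying \Cref{prop:algebras}. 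Your proof has no substitute for this step.

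A secondary, fixable issue: your argument that $J$ is definably connected does not work as written. $J/J^0$ is a finite \emph{subgroup} of the divisible group $R/J^0$, not a direct summand, and divisible groups have plenty of nonzero finite subgroups, so nothing forces it to vanish. The correct route is via \Cref{prop:subgr}: the $0$-Sylow $S_0$ of $(R,+)$ lies in $\Ann(R) \subseteq J$, and $J \cap \mtf(R,+)$ is torsion-free hence definably connected, so $J = (J \cap \mtf(R,+)) + S_0$ is a sum of definably connected subgroups containing $0$ and is therefore definably connected. With that repaired, the idempotent-endomorphism dichotomy on the $1$-dimensional connected $J$ is correct and is a nice uniform replacement for the paper's case analysis of $Je$, $JK$ and $KJ$ --- but only once the existence of the definable complement $S$ has been established non-circularly.
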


\begin{proof}
Assume first $(R, +)$ is torsion-free. We claim that either $R$ is commutative or $R$ is centerless. If not, $\dim Z(R) = 1$. Let $a \notin Z(R)$. Then $C(a)$ is a proper subring of $R$ and it must be $\dim C(a) = 1$, since $a \in C(a)$ and $R$ is definably connected. As $Z(R) \se C(a)$, it follows that $Z(R) = C(a)$ by connectedness, in contradiction with $a \in C(a)$ and $a \notin Z(R)$.   
 
Suppose $R$ is commutative.  Since $R$ is neither nilpotent nor reduced, its Jacobson radical $J$ must be $1$-dimensional by \Cref{theo:Jacobson}, and $R$ contains some nonzero idempotent $e \notin J$. By dimension reasons, it must be
 \[
 R = J + R(e),
 \]
 
 where $e$ is the unity of $R(e)$ by \Cref{lem:idempotent-unit}.
 The quotient $R/J$ is a $1$-dimensional semiprime ring, so a $\Rs$-definable real closed field $K$. Because $J$ is $1$-dimensional nilpotent, $J$ is a null ring. 
 
 If $Je = \{0\}$, then $J = \Ann(R)$ and $R = J \times R(e)$. Note that $R(e)$ 
 is definably isomorphic to $K$, so $(ii)$ holds. (If $(J, +)$ is definably isomorphic to $(K, +)$, 
 $(i)$ holds as well).
 
 If $Je \neq \{0\}$, it must be $Je = J$, as $J$ is a $1$-dimensional ideal. 
 Hence $eR = R$ and $R$ is unital and $1 = e$. As $\dim J = 1$, every element $ a \in J$ is $2$-nilpotent (\Cref{prop:bound}). Thus
$(1+a)(1-a) = 1$ and $(1+a)$ is a unit. It follows that the map $R \to R$, $r \mapsto (1+a)r$ is a definable automorphism of $(R, +)$ for each $a \in J$, $R$ is a direct sum of $1$-dimensional definable 
subgroups by \cite{Co21}, and $(i)$ holds by \Cref{prop:algebras}.

 Suppose $R$ is centerless. Let $e \in R$ be a nonzero idempotent element from \Cref{theo:Jacobson}. Because $R$ is not commutative, $R \neq R(e)$, so $\dim R(e) = 1$, $R(e)$ is a real closed field $K$
 definably isomorphic to the $\Rs$-definable $R/J$, and $R = J \oplus R(e) = J \oplus K$. Because $J$
 is an ideal, $JK$ and $KJ$ are both contained in $J$, so they must be equal to either 
 $\{0\}$ or $J$. 
 
Because $R$ is centerless, they cannot both be trivial, otherwise $J \se \Ann(R) \se Z(R)$. So it must be
$JK = J$ or $KJ = J$. In either case, $J$ is a $1$-dimensional $K$-vector space, and $(i)$ holds by \Cref{prop:algebras}.  
 
Suppose now $G = (R, +)$ is not torsion-free. We claim that in this case 
 $(ii)$ holds. The infinite $0$-Sylow subgroup $R_0$ is contained in $\Ann(R)$ by \Cref{prop:subgr}.  As $\Rs$ has non-trivial multiplication, $\dim R_0 = 1$. Therefore $\dim \mtf(G) = 1$ and $\Rs = \mtf(G) \times R_0$. Because $\Rs$ has non-trivial multiplication, $\mtf(G)$ is a real closed field by \Cref{cor:null-tf} and \Cref{prop:1-dim}. Thus $R_0 = \Ann(R)$ is $\Rs$-definable. To see that $\mtf(G)$ is $\Rs$-definable too, note that $\mtf(G) = aR$ for each $a \notin \Ann(R)$.  
 \end{proof}

\begin{proof}[Proof of \Cref{theo:main}]
By induction on $n = \dim R$. If $n = 1$, $R = K_1$ is a real closed field and $s = 1$.
Suppose $n > 1$. We will first prove (1)-(4) distinguishing the three cases $R$ nilpotent, $R$ semiprime and $R$ neither nilpotent nor semiprime, and afterwards we will prove $(5)$. 

If $R$ is nilpotent, all claims have been proved in \Cref{theo:nilpotent}.

If $R$ is semiprime, $\Ann(R) = \{0\} = R_0$. By \Cref{theo:semiprime} and \Cref{theo:simple}, $R$ is a direct product of simple definable ideals, each of the form $M_n(D)$ where $D$ is a $K$-definable division ring, for some definable real closed field $K$. Let $K_1, \dots, K_s$  be the non-isomorphic real closed fields appearing in the simple factors above, and set $R_1, \dots, R_s$ to be the corresponding ideals, so that 
$R = R_1 \times \cdots \times R_s$. Each $R_i$ is a finite-dimensional definable associative $K_i$-algebra, and $R$ is unital. Note that 
$R(1) = R_1(1) \times \cdots \times R_s(1) = K_1 \times \cdots \times K_s$ and $s = \dim R(1)$, so $(3)$ and $(4)$ holds.
  
Assume now $R$ is not nilpotent nor semiprime. In this case, the Jacobson radical $J = J(R)$ is an infinite nilpotent ideal and the quotient $R/J$ is a semiprime ring. Together with $(1)-(4)$ we will see there is a semiprime definable subring $S$ definably isomorphic to $R/J$ such that 
\[
R = J \oplus S,
\] 

\medskip
completing the proof of \Cref{theo:Jacobson}. The other claims were proved in \Cref{section:Jacobson}.

Assume \fbox{$J^2 = \{0\}$}. That is, $J$ is a null ring. Moreover, suppose first
\fbox{$ \dim R/J = 1$}, so that $R/J$ is a real closed field. If $\dim J = 1$, then $\dim R = 2$ and our claims follow from \Cref{prop:2-dim}.  Let $\dim J > 1$. We need to show   there is a definable subring $K$ that is a real closed field definably isomorphic to $R/J$ such that
\[
R = J \oplus K = R_0 \times R_1,
\] 

\medskip
where $R_1$ is a definable associative $K$-algebra and $R_0 \subseteq \Ann(R)$.

\begin{claim}\label{claim:unital}
If $R$ is unital and $R(1)$ is a real closed field $K$, then $R$ is a definable associative $K$-algebra.
\end{claim}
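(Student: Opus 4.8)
The plan is to realize $R$ as a $K$-vector space with the scalar action coming from ring multiplication and then quote \Cref{prop:algebras}. First I would record that $K = R(1)$ is central: the multiplicative identity satisfies $1 \in Z(R)$, and since $Z(R) = C(R)$ is a definable subring while $R(1)$ is by definition the smallest definable subring containing $1$, we obtain $K = R(1) \subseteq Z(R)$. Consequently, multiplication by an element of $K$ on the left agrees with multiplication on the right, so the scalar action below is unambiguous.

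Next I would equip $R$ with the scalar multiplication $K \times R \to R$, $(k, x) \mapsto kx$, where the product is the ring product. This map is definable, being a restriction of the ring multiplication. I would then check that $(R, +)$ becomes a left $K$-module: the two distributive laws and the associativity $(k_1 k_2)x = k_1(k_2 x)$ are immediate from the ring axioms, and $1 \cdot x = x$ holds since $R$ is unital with $1 \in K$. As $K$ is a (real closed) field, this module structure makes $R$ a definable $K$-vector space.

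Finally I would apply \Cref{prop:algebras} to this definable $K$-vector space $R$, concluding at once that $R$ is a definable associative $K$-algebra, which is the assertion of the claim.

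The argument is short, and the only step carrying content is the centrality of $K = R(1)$; the remainder is a routine module verification followed by an appeal to \Cref{prop:algebras}. It is worth emphasizing that centrality is precisely the condition reconciling the algebra compatibility $(ru)(sv) = (rs)(uv)$ produced by \Cref{prop:algebras} with the choice of ring multiplication as the scalar action, since setting $v = 1$ in that identity reduces it to $us = su$. Thus this step is simultaneously the crux and the internal consistency check of the proof.
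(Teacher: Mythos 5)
Your proof is correct and takes essentially the same route as the paper, which simply restricts the ring multiplication to $K \times R$ to get a definable $K$-vector space structure and then invokes \Cref{prop:algebras}; your additional observation that $K = R(1) \subseteq Z(R)$ (since $Z(R)$ is a definable subring containing $1$) is a valid and worthwhile consistency check, even though \Cref{prop:algebras} derives the compatibility identity on its own from the fact that $(K,+)$ has no proper non-trivial definable subgroups.
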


\begin{proof}
The restriction of the ring multiplication $K \times R \to R$, $(k, r) \mapsto kr$ provides a scalar multiplication making $R$ a definable $K$-vector space. By \Cref{prop:algebras}, $R$ is a definable associative $K$-algebra.  
\end{proof}

 \medskip
Suppose $R$ is unital.  For each nonzero 
$x \in J$, $J = \Ann(x) = \Ann_1(x) = \Ann_2(x)$, as $J$ is a null ring and $1 \notin \Ann_i(x)$, so $\Ann(x) \neq R$. 

It follows that $\dim xR = \dim Rx = 1$. Since $x \in xR \cap Rx$, it must be 
\[
R(x) = xR = Rx.
\]

In particular, $R(x) \subsetneq J$ is a definable ideal. Because $J/R(x)$
is the Jacobson radical of $R/R(x)$, by induction hypothesis 
\[
R/R(x) = (J/R(x)) \oplus \ol K,
\]

\medskip
 where $\ol K = (R/R(x))(1)$ is a real closed field definably isomorphic to $R/J$. Let $A$ be the pre-image of $\ol K$ in $R$. Note that because $1 \in R$ maps to the unity of $R/R(x)$ coinciding with the unity of $\ol K$, $1 \in A$.

By induction hypothesis, $A = R(x) \oplus S$, where $S = K = R(1)$ is a real closed field definably isomorphic to $\ol K$ such that  $R = J \oplus R(1)$, and $R$ is a definable associative $K$-algebra by \Cref{claim:unital}.

\medskip
Assume now $R$ is not unital. Because $R$ is not nilpotent, by the equivalence $(1) \Leftrightarrow (5)$ in \Cref{theo:Jacobson}, $R$ contains some idempotent element $e \notin J$. Note that $e + J$ must be the unity of $R/J$, that is the only nonzero idempotent element in a real closed field. It follows that for each $x \in R$, the elements $ex$ and $xe$ map to $x + J \in J/R$. Hence $eR$ and $Re$ map surjectively onto $R/J$ and 
\[
R = J + eR = J + Re.
 \] 
   
   \medskip
 Because $R$ is not unital, $e$ is a zero-divisor by \Cref{lem:zero-unit}. It follows that at least one of $eR$ or $Re$ has smaller dimension than $R$. Call it $A$.

By induction hypothesis, there is a definable subring $K$ of $A$ definably isomorphic to $R/J$ such that $A = J(A) \oplus K$. Hence $R = J \oplus K$ and $R(e) = K$ is a real closed field with unity $e$.

\medskip
We can see that $eR + Re$ is a definable $K$-vector space. To this end:

\begin{claim} \label{claim:eR+Re}
Let $e$ be an idempotent element such that $R(e)$ is a real closed field $K$. 

\begin{itemize}
\item If $\dim eR = n$, there are
$r_1, \dots, r_n \in R$ such that $eR = Kr_1 \oplus \cdots \oplus Kr_n$.

\medskip
\item If $\dim eR = n$, there are
$r_1, \dots, r_n \in R$ such that $Re = r_1K \oplus \cdots \oplus r_nK$.

\medskip
\item If $\dim (eR + Re) = m$, there are $r_1, \dots, r_m \in R$ such that 
\[
eR + Re = Kr_1 \oplus \cdots \oplus Kr_n \oplus r_{n+1}K \oplus \cdots \oplus r_{m}K.
\]
\end{itemize}
\end{claim}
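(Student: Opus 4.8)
The plan is to use that, by \Cref{lem:idempotent-unit}, $e$ is the multiplicative identity of $K=R(e)$, and to read each of $eR$, $Re$, $eRe$ as a (one-sided) $K$-vector space under the ring multiplication. For $k\in K$ and $x\in eR$ one has $ek=k$ and $ex=x$, so $kx=e(kx)\in eR$; thus the scalar action $K\times eR\to eR$ is well defined and definable, making $eR$ a definable left $K$-vector space. Moreover $eR$ is the image of the definably connected $R$ under the definable map $x\mapsto ex$, hence definably connected, so every proper definable subgroup of $(eR,+)$ has strictly smaller $\M$-dimension.

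For the first item I would extract a basis greedily. For $r\neq 0$ in $eR$ the definable group homomorphism $K\to eR$, $k\mapsto kr$, is injective, since $kr=0$ with $k\neq 0$ forces $r=er=(k\inv k)r=0$; hence $Kr$ is a $1$-dimensional definable subgroup. Starting from any $r_1\neq 0$ I would choose successively $r_{i+1}\in eR\setminus(Kr_1\oplus\cdots\oplus Kr_i)$. Each such sum stays direct because $K$ is a field: if a nonzero $kr_{i+1}$ lay in the $K$-subspace $Kr_1\oplus\cdots\oplus Kr_i$, then $r_{i+1}=k\inv(kr_{i+1})$ would lie there too, contradicting the choice. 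Since $\M$-dimension is additive over direct sums of definable subgroups, each step raises the dimension by exactly $1$, and by definable connectedness the process stops precisely when the dimension reaches $n=\dim eR$, giving $eR=Kr_1\oplus\cdots\oplus Kr_n$. The second item is verbatim the same argument with right multiplication, viewing $Re$ as a definable right $K$-vector space.

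For the third item I would first record the two structural facts $eR\cap Re=eRe$ (if $w=ew=we$ then $w=ewe$) and that $eRe$ is a subring containing $K$, hence a definable right $K$-subspace of $Re$. Running the greedy extraction of the first item relative to the subspace $eRe$ inside $Re$, I would obtain $r_{n+1},\dots,r_m\in Re$ with $Re=eRe\oplus r_{n+1}K\oplus\cdots\oplus r_mK$; the index count is correct because $m-n=\dim(eR+Re)-\dim eR=\dim Re-\dim eRe$ by the dimension formula for sums together with $eR\cap Re=eRe$. Since $eRe\se eR$, adjoining $eR$ absorbs $eRe$, while each remaining summand $r_jK\se Re$ meets $eR$ only inside $eR\cap Re=eRe$, which it avoids; hence $eR+Re=eR\oplus r_{n+1}K\oplus\cdots\oplus r_mK$, and substituting the decomposition $eR=Kr_1\oplus\cdots\oplus Kr_n$ from the first item yields the stated form.

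The main obstacle is the conceptual heart of the first item: decomposing a definable $K$-vector space into $1$-dimensional definable $K$-subspaces. This is exactly where I use that $K$ is a genuine field (for injectivity of scalar multiplication and for directness of the greedy sums) together with o-minimal dimension theory and definable connectedness; without the field inverses the sums need not stay direct. Once this is established the third item is essentially bookkeeping, whose one delicate point is correctly identifying the overlap $eR\cap Re$ with $eRe$, so that the combined sum is genuinely direct and of the right size $m$.
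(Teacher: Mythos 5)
Your proof is correct and follows essentially the same route as the paper: a greedy extraction of $1$-dimensional definable $K$-lines $Kr_i$ (resp.\ $r_jK$), using that distinct such lines in a torsion-free/definably connected setting meet trivially and that dimension is additive, terminating by connectedness. The only difference is that you carefully work out the third bullet via $eR\cap Re=eRe$, where the paper simply asserts that the case $eR+Re$ "follows"; your bookkeeping there is a valid filling-in of that step.
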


\begin{proof}
We claim that $(eR, +)$ is torsion-free. If not, suppose $ea = b \neq 0$ is a torsion element.
By \Cref{prop:subgr}, $b \in \Ann(R)$. However, $eb = e^2a = ea = b \neq 0$, contradiction.

Let $\dim eR = n$. Write $x_1 = er_1 \in eR$, $x_1 \neq 0$. Then $x_1 \in Kr_1$ and 
$\dim Kr_1 = 1$, so $\la x_1 \df = Kr_1$. 

If $n = 1$, we are done. Otherwise, let $x_2 \in eR$, $x_2 \notin Kr_1$.  Similarly, 
$\la x_2 \df = Kr_2$ for some $r_2 \in R$. Note that $Kr_1 \neq Kr_2$ implies that $Kr_1 \cap Kr_2 = \{0\}$.  

If $n = 2$, we are done. Otherwise, let $x_3 \in eR$, $x_3 \notin (Kr_1 \oplus Kr_2)$
and $\la x_3 \df = Kr_3$ with $Kr_3 \cap (Kr_1 \oplus Kr_2) = \{0\}$. We iterate until we reach $n$.

The subring $Re$ is similar to $eR$, and $eR + Re$ follows.
\end{proof}

If $\Ann_1(e) = \{0\}$, then $Re = R$ is a definable associative $K$-algebra by \Cref{claim:eR+Re}. If $\Ann_1(e) = \Ann_1(R) \neq \{0\}$ and $xe \in \Ann_1(R)$, then
$xe^2 = xe = 0$ and $x \in \Ann_1(e)$. It follows that 
\[
\Ann_1(R/\Ann_1(R)) = \{0\} \quad \mbox{and} \quad R = Re + \Ann_1(R).
\]

\medskip
Similarly,
\[
\Ann_2(R/\Ann_2(R)) = \{0\} \quad \mbox{and} \quad R = eR + \Ann_2(R).
\]

Therefore,
\[
R = Re + eR + \Ann(R).
\]

Moreover, note that $eR + Re$ is an ideal, because $R = J \oplus R(e)$ and 
$J^2 = \{0\}$. Set $R_1 = eR + Re$. If $R_1 \neq R$, let $R_0$ be an additive subgroup of $\Ann(R)$ from \Cref{fact:divisible-split} such that 
$\Ann(R) = (\Ann(R) \cap R_1) \oplus R_0$. Then
\[
R = R_0 \oplus R_1 = R_0 \times R_1
\]

\medskip
and $(2)$ and $(3)$ hold. Assume now \fbox{$\dim R/J  > 1$}. By the semiprime case, 
\[
R/J = \ol S_1 \times \cdots \times \ol S_d
\]

\medskip
where each $\ol S_j$ is a unital finite-dimensional \emph{definable} associative $K_j$-algebra and $K_i$ is not definably isomorphic to $K_j$ when $i \neq j$.

When $d = 1$, we need to show again that $R = R_0 \times R_1$, where $R_1$ is a definable associative $K$-algebra and $R_0 \se \Ann(R)$.

By the $1$-dimensional case above, the pre-image of $(R/J)(1)$ in $R$ is of the form
$J \oplus K$, with $K$ definably isomorphic to $(R/J)(1)$.  

If $R$ is unital, then $1 \in J \oplus K$, so $R(1) = K$ and $R = R_1$ is a definable associative $K$-algebra by \Cref{claim:unital}. By Wadderburn principal theorem (see Theorem 2.5.37 in \cite{Rowen}), there is a $K$-subalgebra $S$
isomorphic to $R/J$ such that $R = J \oplus S$. Let $m$ be the dimension of $S$ and 
$\{x_1, \dots, x_m\}$ be a basis of $S$ over $K$. Then $S = \oplus_{i = 1}^m Kx_i$ is definable.

If $R$ is not unital, then the unity $e$ of $K$ is a zero-divisor, and as shown above,
\[
R = J + eR = J + Re
\]

\medskip
because $e + J$ is the unity of $R/J$ again. Because at least one of $eR$ and $Re$
has smaller dimension than $R$, by induction hypothesis we can find a definable semiprime ring $S$ that is definably isomorphic to $R/J$ and such that
\[
R = J \oplus S.
\]

As before, $eR + Re$ is a definable associative $K$-algebra and 
$\Ann_i(e) = \Ann_i(R)$ for $i = 1, 2$. Thus the same proof as above applies to show
$(2)$ and $(3)$ hold.     

\medskip
  Suppose now $d > 1$. When dealing with several real closed fields, the following will be useful:

\begin{claim}\label{claim:group-enough}
Let $K_1$, $K_2$ be real closed fields with unity $e_1$ and $e_2$ respectively. A definable homomorphism $f \colon (K_1, +) \to (K_2, +)$ is a ring homomorphism if and only if $f(e_1) = e_2$.
\end{claim}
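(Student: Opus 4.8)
The forward implication is essentially formal: if $f$ is a ring homomorphism then $f(e_1) = f(e_1 \cdot e_1) = f(e_1)^2$, so $f(e_1)$ is an idempotent of the field $K_2$ and hence lies in $\{0, e_2\}$; and $f(e_1) = 0$ would force $f(x) = f(xe_1) = f(x)f(e_1) = 0$ for every $x$, i.e. $f \equiv 0$. So for any nonzero $f$ we get $f(e_1) = e_2$, which is all we ever use. I would dispatch this direction in a single line.

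The substance is the reverse implication, and here the plan is to reuse the subgroup-trapping argument that recurs throughout the paper (e.g. in the proofs of \Cref{prop:algebras} and, verbatim for $(K_i,+)$, in the proof of \Cref{theo:simple}). Assume $f \colon (K_1,+) \to (K_2,+)$ is a definable group homomorphism with $f(e_1) = e_2$; I must upgrade additivity to multiplicativity, i.e. prove $f(xy) = f(x)f(y)$ for all $x,y \in K_1$. First I would fix $x \in K_1$ and set
\[
G_x = \{\, y \in K_1 : f(xy) = f(x)f(y) \,\}.
\]
Using that $f$ is additive and that multiplication distributes over addition in both $K_1$ and $K_2$, one checks directly that $G_x$ contains $0$ and is closed under sums and negatives, so it is a definable subgroup of $(K_1,+)$; the point is that this uses $f$ being a \emph{homomorphism}, not merely a map.

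The key step is to locate the unity inside $G_x$: since $f(x e_1) = f(x)$ and $f(x)f(e_1) = f(x)e_2 = f(x)$, we have $e_1 \in G_x$, so $G_x$ is a \emph{nontrivial} definable subgroup. Because $(K_1,+)$ is $1$-dimensional and torsion-free it has no proper nontrivial definable subgroup, whence $G_x = K_1$. As $x$ was arbitrary, $f(xy) = f(x)f(y)$ for all $x,y$, so $f$ is a ring homomorphism.

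I expect no genuine obstacle: the proof is routine, and the only care-points are formal — verifying that $G_x$ is a subgroup and invoking the absence of proper nontrivial definable subgroups of the $1$-dimensional torsion-free group $(K_1,+)$. If anything is the crux, it is the conceptual observation that multiplicativity need only be checked "one argument at a time" and then propagated across all of $K_1$ by the no-subgroup property, exactly as was done for the fields $K_i$ in the proof of \Cref{theo:simple}.
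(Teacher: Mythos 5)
Your proof is correct and follows essentially the same route as the paper: the paper also fixes one argument, forms the definable subgroup $\{x \in K_1 : f(ax) = f(a)f(x)\}$, observes it contains $e_1$, and concludes it is all of $K_1$ since $(K_1,+)$ has no proper nontrivial definable subgroup. Your extra care on the forward direction (ruling out the zero map) is a harmless refinement of the paper's ``obviously necessary.''
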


\begin{proof}
The condition is obviously necessary. To see that it is sufficient,
fix $a \in K_1$ and let $b = f(a) \in K_2$. Set
\[
G_a = \{x \in K_1: f(ax) = b f(x)\}.
\]

It is easy to check that $G_a$ is a definable subgroup of $(K_1, +)$ containing $e_1$. Since
$(K_1, +)$ has no proper non-trivial definable subgroup, it follows that $G_a = K_1$. As $a$ is arbitrary, $f$ is a ring homomorphism, as claimed. 
\end{proof}

By the case $d = 1$ the pre-mage of each $\ol S_j$ in $R$ is of the form $J \oplus S_j$, where $S_j$ is a semiprime definable subring which is definably isomorphic to $\ol S_j$
and, therefore, a unital definable associative $K_j$-algebra. 

Let $e_j$ be the unity of $S_j$, so that $K_j = S(e_j) = R(e_j)$. 

\begin{claim} \label{claim:different-orthogonal}
If $i \neq j$, then $S_iS_j = S_jS_i = \{0\}$. That is, $S_i \se \Ann(S_j)$. 
\end{claim}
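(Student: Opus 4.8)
The plan is to prove the stronger statement $e_iRe_j = \{0\} = e_jRe_i$, from which the claim is immediate: for $x \in S_i$ and $y \in S_j$ we have $e_ix = xe_i = x$ and $e_jy = ye_j = y$ (as $e_i$, $e_j$ are the unities of $S_i$, $S_j$), so $xy = e_i(xy)e_j \in e_iRe_j$ and $yx = e_j(yx)e_i \in e_jRe_i$; hence vanishing of the two ``corner'' sets gives $S_iS_j = S_jS_i = \{0\}$, i.e. $S_i \se \Ann(S_j)$. As an orienting remark, passing to $R/J = \ol S_1 \times \cdots \times \ol S_d$ shows $e_iRe_j \se J$, since $\ol e_i$ and $\ol e_j$ are unities of distinct direct factors and so $\ol e_i\,\ol S_l\,\ol e_j = \{0\}$ for every $l$; this is not needed below but explains where the products live. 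It therefore suffices to prove $e_iRe_j = \{0\}$, the other equality being symmetric.

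The heart of the argument is a field-isomorphism obstruction, modelled on the implication $(c)\Rightarrow(d)$ of \Cref{theo:simple}. Suppose toward a contradiction that $e_iRe_j \neq \{0\}$ and fix a nonzero $x = e_ire_j$. Recall $K_i = R(e_i)$ and $K_j = R(e_j)$ are real closed fields, so $(K_i,+)$ and $(K_j,+)$ are $1$-dimensional, torsion-free, and have no proper nontrivial definable subgroup. The maps $k \mapsto kx$ and $k' \mapsto xk'$ are injective definable homomorphisms: if $kx = 0$ with $k \neq 0$, then $x = e_ix = k\inv kx = 0$, a contradiction. Thus $K_ix = K_ire_j$ and $xK_j = e_irK_j$ are $1$-dimensional definable subgroups containing $x$. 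Since $\la x \df$ is the smallest definable subgroup containing $x$ and each of these is $1$-dimensional (hence admits no proper nontrivial definable subgroup), I would conclude
\[
\la x \df = K_ix = xK_j,
\]
a single $1$-dimensional group that is definably isomorphic to both $(K_i,+)$ and $(K_j,+)$.

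Composing the two resulting isomorphisms yields a definable group isomorphism $f\colon K_i \to K_j$ characterized by $kx = xf(k)$. Because $e_ix = x = xe_j$ and $k' \mapsto xk'$ is injective, one gets $f(e_i) = e_j$, so \Cref{claim:group-enough} upgrades $f$ to a \emph{ring} isomorphism $K_i \cong K_j$, contradicting the hypothesis that $K_i$ is not definably isomorphic to $K_j$. Hence $e_iRe_j = \{0\}$, and symmetrically $e_jRe_i = \{0\}$, completing the proof.

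The step I expect to require the most care is the identification $\la x \df = K_ix = xK_j$. It rests on each of $K_ire_j$ and $e_irK_j$ being genuinely $1$-dimensional and torsion-free, which is exactly what the injectivity observations above secure (they make these groups isomorphic copies of $(K_i,+)$ and $(K_j,+)$), together with the standard fact that a $1$-dimensional torsion-free definable group has no proper nontrivial definable subgroup. Everything else is either formal — the reduction to the corners $e_iRe_j$, $e_jRe_i$ — or already packaged in \Cref{claim:group-enough}, which is precisely what lets a merely additive isomorphism be promoted to a field isomorphism and thereby forces the contradiction.
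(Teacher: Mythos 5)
Your proof is correct and uses essentially the same mechanism as the paper: a nonzero element of the "corner" generates a $1$-dimensional definable subgroup that is simultaneously $K_ix$ and $xK_j$, and the resulting definable additive isomorphism sending $e_i\mapsto e_j$ is promoted to a field isomorphism by \Cref{claim:group-enough}, contradicting $K_i\not\cong K_j$. The only cosmetic difference is that the paper runs this argument on the single element $a=e_ie_j$ (orthogonality of the idempotents already suffices), whereas you run it on an arbitrary element of $e_iRe_j$.
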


\begin{proof}
It suffices to show that $e_i$ and $e_j$ are orthogonal idempotent. In any case, 
$e_ie_j \in J$, because $e_i + J$ and $e_j + J$ are orthogonal in $R/J$.

If $e_ie_j = a \neq 0$, the $1$-dimensional additive subgroup $K_ie_j$ contains $a$, hence 
$\la a \df = K_ie_j$. Similarly, $\la a \df = e_iK_j$. As $K_ie_j = e_iK_j$, there is a definable isomorphism $(K_i, +) \to (K_j, +)$ such that $e_i \mapsto e_j$. By 
\Cref{claim:different-orthogonal}, $K_i$ is definably isomorphic to $K_j$ as a field, contradiction. Therefore, $e_ie_j = e_je_i = 0$.
\end{proof}

As $\{e_1, \dots, e_n\}$ is a set of orthogonal idempotents, $e = e_1 + \dots + e_n$
is an idempotent too, mapping to the unity of $R/J$. Set $S = S_1 \oplus \cdots \oplus S_d$. By \Cref{claim:different-orthogonal},
\[
S = S_1 \times \cdots \times S_d
\]

\medskip
is a definable subring of $R$ which is definably isomorphic to $R/J$ and such that 
$R = J \oplus S$. If $R$ is unital, $1 = e + a$ with $a \in J$. Because $J$ has trivial multiplication,

\[
1a = (e + a)a = ea = a \quad \mbox{ and } \quad e1 = e(e + a) = e + ea = e.
\]

\medskip
It follows that $a = 0$ and $1 = e \in S$.  Clearly, $R(1) = K_1 \times \cdots \times K_d$ and $s = d$.

\medskip
By the case $d = 1$, for each $i = 1, \dots, s$
\[
J \oplus S_i = R_0^i \times R_i
\]

\medskip
where $R_i$ is a definable associative $K_i$-algebra of the form $J_i \oplus S_i$ 
($J_i = J(R_i) = R_i \cap J$) and $R_0^i \se \Ann(J \oplus S_i)$.

\begin{claim}\label{claim:different-orthogonal-algebras}
If $i \neq j$, $R_i \se \Ann(R_j)$.
\end{claim}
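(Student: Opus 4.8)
The plan is to reduce the desired orthogonality $R_iR_j = R_jR_i = \{0\}$ to the single vanishing statement $e_iJe_j = e_jJe_i = \{0\}$, and then to rule out a nonzero such component by manufacturing a definable field isomorphism $K_i \cong K_j$, contradicting the standing hypothesis $i \neq j$.

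First I would record the shape of the factors. Since $R_i$ arises from the $d=1$ analysis applied to $P_i = J \oplus S_i$ with idempotent $e_i$, it has the form $R_i = e_iP_i + P_ie_i$, and using $e_iS_i = S_ie_i = S_i$ this gives $R_i = S_i + e_iJ + Je_i$ (similarly for $R_j$). Expanding $R_iR_j$ and discarding the terms that are visibly zero does most of the work: any product of two elements of $J$ vanishes since $J^2 = \{0\}$; $S_iS_j = \{0\}$ by \Cref{claim:different-orthogonal}; and $S_ie_j = e_iS_j = \{0\}$, $e_ie_j = 0$ because the $e_k$ are orthogonal. Writing $s_i = e_is_i$, $s_j = e_js_j$ and using that $J$ is an ideal, every summand that survives can be rewritten as an element of $e_iJe_j$ or of $(e_iJe_j)R$. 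Hence $R_iR_j = \{0\}$ provided $e_iJe_j = \{0\}$, and symmetrically $R_jR_i = \{0\}$ follows from $e_jJe_i = \{0\}$. This reduction is just Peirce bookkeeping with the idempotents, so I expect no difficulty here.

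The substantial step is $e_iJe_j = \{0\}$ for $i \neq j$, and this is where I would concentrate. Suppose not, and fix $x \in J$ with $v := e_ixe_j \neq 0$. Mimicking the argument in \Cref{claim:different-orthogonal}, consider the two definable subgroups $K_ixe_j = \{kxe_j : k \in K_i\}$ and $e_ixK_j = \{e_ixk : k \in K_j\}$. Each is the image of the $1$-dimensional torsion-free group $(K_i,+)$, respectively $(K_j,+)$, under a definable additive map that is nonzero, since it sends $e_i$, respectively $e_j$, to $v \neq 0$; as these groups have no proper nontrivial definable subgroup, both maps are injective and both images are $1$-dimensional. Since each contains $v$ and $\langle v\df$ is $1$-dimensional and torsion-free, I get $\langle v\df = K_ixe_j = e_ixK_j$. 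Composing the two group isomorphisms with $\langle v\df$ then yields a definable group isomorphism $f \colon K_i \to K_j$ with $f(e_i) = e_j$, which by \Cref{claim:group-enough} is a field isomorphism, contradicting that $K_i$ and $K_j$ are not definably isomorphic. Thus $e_iJe_j = \{0\}$, and the identical argument with the roles of $i$ and $j$ exchanged gives $e_jJe_i = \{0\}$.

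Combining the two ingredients yields $R_iR_j = R_jR_i = \{0\}$, that is $R_i \subseteq \Ann(R_j)$. The only genuinely delicate point is the field-isomorphism argument, and even there the heavy lifting is already packaged in \Cref{claim:group-enough} and in the $1$-dimensionality of the ambient real closed fields; everything else is a mechanical expansion controlled by $J^2 = \{0\}$ and the orthogonality of the idempotents.
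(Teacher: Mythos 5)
Your proof is correct and follows essentially the same route as the paper's: both derive a definable additive isomorphism $K_i \to K_j$ sending the identity to the identity from a hypothetical nonzero product, and invoke \Cref{claim:group-enough} to upgrade it to a field isomorphism, contradicting that $K_i$ and $K_j$ are not definably isomorphic. The paper is slightly more direct --- it takes arbitrary $a \in R_i$, $b \in R_j$ with $ab = c \neq 0$ and uses that $\la a \df = K_ia$ and $\la b \df = K_jb$ because each $R_k$ is a definable $K_k$-vector space, so your preliminary Peirce reduction to $e_iJe_j = \{0\}$ is not needed, though it is harmless and correctly carried out.
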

  
 \begin{proof}
 This is similar to \Cref{claim:different-orthogonal}, so we will be brief. The main observation is that $R_i$ is a definable $K_i$-vector space, while $R_j$ is a definable $K_j$-vector space.   
 
 If $R_i$ is not included in $\Ann(R_j)$, let $a \in R_i$ and $b \in R_j$ such that $ab = c \neq 0$. Set $A = \la a \df$, 
 $B = \la b \df$, $C = \la c \df$. Then $Ab = aB = C$ and by \Cref{claim:group-enough}, $K_i$ is definably isomorphic to $K_j$, contradiction. 
 \end{proof} 
 
Set $R' = R_1 \times \cdots \times R_s$. If $R = R'$ then $R_0$ is trivial and $(2)-(4)$ hold. Otherwise, we can find a direct additive complement $R_0$ of $R'$ in $R$ in
\[
\bigcap_{i = 1}^s R_0^i \se \bigcap_{i = 1}^s \Ann(J \oplus S_i) \se 
\bigcap_{i = 1}^s \Ann(S_i) = \Ann(S) = \Ann(R).
\] 

This shows that $(2)$ and $(3)$ hold. If $R$ is unital, $\Ann(R)$ is trivial and so is $R_0$, and $(4)$ holds too.  
  
 \bigskip
Assume now \fbox{$J^2 \neq \{0\}$}. That is, $J$ is not a null ring. By \Cref{prop:subgr} and \Cref{prop:nil-flag}, $\Ann(J)$ is an infinite ideal. 

Let $\ol R = R/\Ann(J)$ and $\ol J = J/\Ann(J)$. Note that $J(\ol R) = \ol J$. By induction hypothesis, 
there is a semiprime definable subring $\ol S$ of $\ol R$ which is definably isomorphic to $\ol R/\ol J$, and $\ol R = \ol J \oplus \ol S$.  

Let $A$ be the pre-image of $\ol S$ in $R$. Then $J(A) = \Ann(J)$ and 
$A/J(A) = \ol S$. By induction hypothesis, $A = \Ann(J) \oplus S$ for some semiprime definable subring $S$ which is definably isomorphic to $A/\Ann(J) = \ol S = R/J$ and $R = J \oplus S$. The proof of \Cref{theo:Jacobson} is now completed.  By the semiprime case,
\[
S = S_1 \times \cdots \times S_d
\]

\medskip
where each $S_j$ is a unital finite-dimensional \emph{definable} associative $K_j$-algebra. Let $e_j$ be the unity of $S_j$ and $e = \sum_{i = 1}^d e_i$ the unity of $S$.
Because $J$ is not a null ring, 
\[
J = J_0 \times J_1 \times \cdots \times J_p,
\]

\medskip
where for $i > 0$, $J_i$ is a finite-dimensional associative $\ol K_i$-algebra that is not a null ring. So we have
\[
R = J \oplus S = (J_0 \times J_1 \times \cdots \times J_p) \oplus (S_1 \times \cdots \times S_d).
\]

\medskip
By similar arguments used in \Cref{claim:different-orthogonal}, $J_i \subseteq \Ann(S_j)$, whenever $\ol K_i$
is not definably isomorphic to $K_j$. On the other hand, if $\ol K_i$ is definably isomorphic to $K_j$, then
\[
R_j = (J_ie_j + e_jJ_i + J_i) \oplus S_j = J_i + e_iR + Re_i 
\]

\medskip
 is an associative $K_j$-algebra with Jacobson radical $J_ie_j + e_jJ_i + J_i$ that is orthogonal to any other $R_k$ obtained similarly.
 
 Assume $d = 1$ so that $S$ is a definable associative $K$-algebra with unity $e$. If one of the $\ol K_i$'s is definably isomorphic to $K$, suppose is $\ol K_p$ to ease the notation. 
 
 If this is not the case, set $s = p+1$ and $R_i = J_i$ for $i = 1, \dots, p$. The additive subgroup $(eR + Re) \cap J_0$ has a complement $H$ in $J_0$. Moreover, the additive subgroup $H \cap \Ann(e)$  has a complement $A$ in $H$. Note that $Ae + eA$ is a not trivial additive subgroup contained in $Re + eR$. So now define 
 \[
 R_{p+1} = R_s = eR + Re + A
 \]
 
and 
 \[ 
 R_0 = H \cap \Ann(e) \subseteq \Ann(J) \cap \Ann(S) = \Ann(R).
 \]
 
 \medskip
 If $\ol K_p$ is definably isomorphic to $K$, set $s = p$, $R_i = J_i$ for $i = 1, \dots, p-1$,  
 \[
 R_p = R_s = eR + Re + A + J_p
 \]
 
 \medskip
 and $R_0 = H \cap \Ann(e) \subseteq \Ann(R)$, where $A$ and $H$ are as above.
 
 If $d > 1$, one can first consider the definable subrings $J \oplus S_j$ using the case $d = 1$, and by orthogonality of associative algebras over non-isomorphic real closed fields, we get the general case
 where $s$ is equal to $p$ plus the number of $K_j$'s that are not definably isomorphic to any $\ol K_i$'s. 
 
 \bigskip
 If $\mtf{(J, +)}$ is a direct sum of $1$-dimensional definable subgroups, then each $J_i$ is a definable 
 $\ol K_i$-vector space, and so is each $R_i$ as a consequence. By \Cref{prop:algebras}, (3) holds.
 
 \bigskip
 If $R$ is unital, $R_0$ is trivial and each $R_i$ is unital with unity $e_i$ and $1 = \sum e_i$. 
 
 If $s = 1$, then $R$ is a definable associative $K$-algebra by \Cref{claim:unital}. If $s > 1$,
 let $\ol e_j$ be the sum of all the $e_i$'s except for $e_j$. Then $R_j = \Ann(\ol e_j)$ is a definable associative $K_j$-algebra with $R_j(e_j) = K_j$ and (4) holds.

\bigskip
 Finally, let us show that (5) holds. In general, when a ring $R$ is not unital, there is a natural unital ring, defined over $\Z \times R$, that is unital, contains $R$ as an ideal and it is the smallest such. However, if $R$ is definable in an o-minimal structure, then $\Z \times R$ surely is not, and we would like to find a \emph{definable} unital ring containing $R$ as an ideal. 
 
 By (2)--(4), every definable subgroup of the additive group of a unital definable ring is a direct sum of $1$-dimensional vector spaces over real closed fields, and the conditions for $R$ to embed as an ideal in a definable unital ring are therefore necessary.

To see that they are sufficient, assume first $R_0$ is trivial. Given $R = R_1 \times \cdots \times R_s$, after reordering the direct factors, let $R_1, \dots, R_p$ be the ones that are not unital. For $i = 1, \dots, p$, let $R_i^{\wedge} = K_i \times R_i$ with usual addition and multiplication given by
\[
(a, x)(b, y) = (ab, ay + bx + xy).
\]

\medskip
Then $R_i^{\wedge}$ is a unital definably connected ring containing $R_i$ as an ideal. 
As $\dim R_i^{\wedge} = \dim R_i + 1$, $R_i^{\wedge}$ is minimal. Set $K = K_1 \times \cdots \times K_p$ 
and $R^{\wedge} = K \times R$ with usual addition and the obvious multiplication from the $R_i^{\wedge}$s. Then $R^{\wedge}$ is the smallest unital definably connected ring containing $R$ as an ideal.

If $R_0$ is not trivial, then $K$ must be replaced with $K \times A$, where $A$ is the direct sum of the $A_i$'s that are not pairwise definably isomorphic as groups, acting on $R_0$ in the obvious way, so completing the proof of \Cref{theo:main}.
\end{proof}
 
 \medskip
When a definable ring is not definably connected, one can reduce to finite rings and definably connected rings due to the following:

\begin{proof}[Proof of \Cref{prop:disconnected}]
 The $n$-torsion subgroup $T_n$ of $(R, +)$ is a finite ideal contained in $\Ann(R^0)$ by \Cref{prop:subgr}, and $(R^0)^2 \se \mtf(R, +)$. Since $R = T_n + R^0$, $R^2 \se T_n \times \mtf(R, +)$, as claimed.
 
 If $R$ is unital, write $R = F \oplus R^0$ and $1 = a + b$ with $a \in F$ and $b \in R^0$. Since $a \se \Ann(R^0)$ and $b \in \Ann(F)$, it follows that $a$ and $b$ are unities for $F$ and $R^0$ respectively. 
 Therefore, $(R^0, +)$ is torsion-free by \Cref{theo:main}, $F = T_n$, and $R = F \times R^0$, as required.
   \end{proof}
 
 \bigskip
 \begin{proof}[Proof of \Cref{theo:fields}]
Let $\M = (M, <, +, \cdot, \dots)$ be an o-minimal expansion of a field. We first notice that the rings in $(a)$ and $(b)$ are $\M$-definable.

\medskip
For $(a)$, every finite-dimensional associative $M$-algebra is isomorphic, as a ring, to a subalgebra of $M_n(M)$, the ring of $n \times n$ matrices with entries in $M$, for some positive integer $n$. Every subalgebra of $M_n(M)$ is \M-definable, after fixing a basis.

\medskip
 For $(b)$, suppose $(R, + , 0, \cdot)$ is a finite ring, $(G, \oplus, 0)$ is a definable abelian group and $i \colon (R, +) \to (G, \oplus)$ is an  embedding such that $i\inv (G^0) \subseteq \Ann(R)$.   
 
 Set 
$R' = i(R) \subseteq G$. For $x, y \in G$ define
\[
x \otimes y = 
\begin{cases}
i(i\inv (x) \cdot i\inv (y)) & \mbox{ if } x, y \in R' + G^0 \\
0 & \mbox{otherwise}.
\end{cases}
\]

One can check that $(G, \oplus, \otimes, 0)$ is a \M-definable ring as described.  

\medskip
Conversely, suppose \rng\ is a \M-definable ring. Assume first $R$ is definably connected. Since all \M-definable real closed fields are isomorphic to $(M, +, \cdot)$,  by \Cref{theo:main}, $R = R_1 \times R_0$, where $R_1$ is an associative $M$-algebra and 
$R_0 \se \Ann(R)$. Note that even if $R_1$ or $R_0$ is not definable, they are both isomorphic to definable rings as in $(a)$ and $(b)$ respectively, where the finite ring in $(b)$ is just the trivial ring.

Suppose $R$ is not definably connected. By the connected case, $R^0 = A \times B'$, where 
$A$ is an associative $M$-algebra and $B \se \Ann(R^0)$.

Since $A$ is a divisible subgroup of the abelian $(R, +)$, $A$ is a direct summand. Let $B$ be such that $R = A \oplus B$.

If $|R/R^0| = n$, by \Cref{prop:disconnected}, the $n$-torsion subgroup $T_n$ is a (finite) ideal.  
As $A$ is torsion-free, the $n$-torsion subgroup of $R$ coincides with the $n$-torsion subgroup of $B$, and therefore $T_n \se B$. Moreover, $B \se \Ann(A)$ by \Cref{prop:subgr}, so $R = A \times B$, and we are done.
 \end{proof}
 
 When $R$ is not unital, the definably connected component $R^0$ may not be a direct factor, and the finite ring in the ideal $B$ from \Cref{theo:fields} may have non-trivial intersection with $R^0$. Below is an example.
 
 \begin{ex}\label{ex:disconnected}
Let $\mathcal{A} = (A, < , +, 0)$ be and ordered divisible abelian group. Fix $a \in A$, $a > 0$. 
On $[0, a[$ let $\oplus_a$ be the addition modulo $a$. Set $R = \Z_2 \times [0, a[$ with operations
\[
(t, x) \oplus (s, y) = (t+s,\ x \oplus_a y)
\]
\[
(t, x) \otimes (s, y) =  
\begin{cases}
(0, a/2) & \mbox{ if } t=s=1 \\
(0, 0) & \mbox{otherwise}.
\end{cases}
\]

\medskip
Then $(R, \oplus, \otimes)$ is a $\mathcal{A}$-definable ring. The definably connected component 
\[
R^0 = \{0\} \times [0, a[
\]

\medskip
 has two complements in $(R, \oplus)$: $F_1 = \{(0, 0), (1, 0)\}$ and $F_2 = \{(0, 0), (1, a/2)\}$, but neither is a subring. 

Note that $R$ is an example of $(b)$ from \Cref{theo:fields} where the finite ring 
\[
\left \{ 
\begin{pmatrix}
0 & 0 & 0 \\
x & 0 & 0 \\
y & x & 0
\end{pmatrix}
 : x, y \in \Z_2
 \right \}
\]

\medskip
embeds in $(R, \oplus, \otimes)$ as a 2-torsion ideal with non-trivial intersection with $R^0 \se \Ann(R)$.  

This also provides an example of \Cref{prop:disconnected}, where $n = 2$, $\mtf(G)$ is trivial and $R^2 \se T_2$.
\end{ex}

\bigskip

\end{document}